\newcommand{\ZZ}{\mathbb{Z}}
\newcommand{\CC}{\mathbb{C}}
\newcommand{\PP}{\mathbb{P}}
\newcommand{\NN}{\mathbb{N}}
\newcommand{\HH}{\mathbb{H}}
\newcommand{\QQ}{\mathbb{Q}}
\newcommand{\RR}{\mathbb{R}}
\newcommand{\FF}{\mathbb{F}}
\newcommand{\End}{{\rm End}}
\newcommand{\Gal}{{\rm Gal}}
\newcommand{\ord}{{\rm ord}}
\newcommand{\cLL}{{\mathcal L}}
\newcommand{\cOO}{{\mathcal O}}
\newcommand{\cCC}{{\mathcal C}}
\newcommand{\cEE}{{\mathcal E}}
\newcommand{\cRR}{{\mathcal R}}
\newcommand{\cSS}{{\mathcal S}}
\newcommand{\cTT}{{\mathcal T}}
\newcommand{\cXX}{{\mathcal X}}
\newcommand{\cYY}{{\mathcal Y}}
\newcommand{\cMM}{{\mathcal M}}
\newcommand{\cHH}{{\mathcal H}}
\newcommand{\SL}{{\rm SL}}
\newcommand{\sms}{\smallsetminus}
\DeclareMathOperator{\Spec}{Spec}
\newcommand{\ve}{{\varepsilon}}
\newcommand{\ol}{\overline}
\newcommand{\PSL}{\mathop{\rm PSL}\nolimits}
\newcommand{\PGL}{\mathop{\rm PGL}\nolimits}
\newcommand{\Frac}{\mathop{\rm Frac}\nolimits}
\theoremstyle{definition}
\newtheorem{Defi}{Definition}[section]
\newtheorem{Conj}[Defi]{Conjecture}
\newtheorem{Question}[Defi]{Question}
\theoremstyle{remark}
\newtheorem{Rem}[Defi]{Remark}
\theoremstyle{plain}
\newtheorem{Prop}[Defi]{Proposition}
\newtheorem{Lemma}[Defi]{Lemma}
\newtheorem{Cor}[Defi]{Corollary}
\newtheorem{Thm}[Defi]{Theorem}
\newcommand{\dR}{{\rm\scriptscriptstyle dR}}
\newcommand{\C}{{\mathcal C}}
\renewcommand{\subsection}{\@startsection{subsection}{2}%
        {\z@}{-3.25ex plus -1ex minus-.2ex}{-1em}{\bf}}
\begin{document}{\large}

\title{Differential equations associated with  nonarithmetic Fuchsian groups}

\begin{abstract}
We describe globally nilpotent differential operators of rank $2$
defined over a number field whose monodromy group is a nonarithmetic
Fuchsian group. We show that these differential operators have an
$\cSS$-integral solution.  These differential operators are naturally
associated with Teichm\"uller curves in genus $2$. They are
counterexamples to conjectures by Chudnovsky--Chudnovsky and
Dwork. We also determine the field of moduli of primitive
Teichm\"uller curves in genus $2$, and an explicit equation in some
cases.
\end{abstract}
\date{}
\subjclass[2000]{Primary 14H25; Secondary 32G15, 12H25}
\keywords{Globally nilpotent differential operators, Teichm\"uller curves, 
$\cSS$-integral solutions}
\author{Irene I.~Bouw and Martin M\"oller} \maketitle

Let $L$ be a Fuchsian differential operator of order $2$ defined over
a number field $K$. In the literature, one finds several conjectures
which connect that $L$ ``comes from geometry'', is globally nilpotent,
or admits an integral solution. Here ``coming from geometry'' could
mean, for example, that $L$ is a direct factor of the Picard--Fuchs
differential equation of a family of curves. The most famous of these
 is Grothendieck's $p$-curvature conjecture which says that
every globally nilpotent differential equation comes from geometry.
Another conjecture says that if $L$ admits an integral solution then
$L$ comes from geometry.
We refer to \S~\ref{pcurvsec} for definitions, and
to \cite{Andre}, \cite{Andre04}, \cite{Beukers} and \cite{Katz} for
partial results and precise formulations of the conjectures.

In this paper, rather
than proving a version of these conjectures, we show the existence of
integral solutions of a certain interesting class of differential
equations which come from geometry.  Here a solution $u$ is {\em
integral} if there exists a finite set of primes $\cSS$ such that the
coefficients of $u$ are in the ring ${\mathcal O}_{\cSS}\subset K$ of
$\cSS$-integral elements. 

For hypergeometric differential equations the existence of an integral
solution is well understood.  Differential equations with $4$
singularities which admit an integral solution are very rare.  Zagier
(\cite{Beukers}, \cite{Zagier}, \S~2.4) found in a huge computer
search essentially only $6$ of such differential operators with
$\QQ$-coefficients.  All of these are pullbacks of a hypergeometric
differential operators, and are associated to families of elliptic
curves.  Other known examples of $2$nd order differential equations
with an integral solution are associated to elliptic K$3$-surfaces
(\cite{BS}). This includes the differential equations that came up in
the proof of the transcendence of $\zeta(2)$ and $\zeta(3)$.  The
differential equations with integral solution we consider in this
paper have $5$ singularities, and are not the pullback of a
hypergeometric differential equation.

Our differential equations also come from geometry, though they are of
 a different nature. Namely, they are the uniformizing differential
 equations of Teichm\"uller curves in genus $2$.  These curves were
 discovered by Calta and by McMullen and intensely studied from a
 complex-analytic point of view. This paper starts to explore
 arithmetic aspects of Teichm\"uller curves.  For an introduction to
 Teichm\"uller curves, we refer to \S~\ref{Teichsec}.  We restrict to
 Teich\-m\"uller curves parameterizing curves of genus $2$. Let $C$ be
 such a Teichm\"uller curve.

This paper contains results in two direction. Firstly, we prove new
results on Teichm\"uller curves in genus $2$. Secondly, we show that
the uniformizing differential equation of these Teichm\"uller curves
have interesting arithmetic properties.

\bigskip\noindent A new ingredient we use for studying Teichm\"uller
curves in genus $2$ is the construction of genus-$2$ fibrations as
double coverings of ruled surfaces (following \cite{CaPi07}). This
allows us, for example, to compute the Lyapunov exponents of $C$
(\S~\ref{lyasec}) for all Teichm\"uller curves $C$ in genus two.  This
extends a result of Bainbridge (\cite{Ba06}).

Teichm\"uller curves in genus $2$ whose generating translation surface
$(X,\omega)$ has a double zero are classified by McMullen
(\cite{McM03}, \cite{McM06a}). These Teichm\"uller curves are
characterized by two invariants: the discriminant $D\in \NN$ and, if
$D\equiv 1\pmod{8}$, the spin invariant $\varepsilon\in \ZZ/2\ZZ$
which is the signature of a quadratic form on a certain subspace of
$H^1(X,\ZZ/2\ZZ)$ (\S~~\ref{Teichsec}). We denote by $W_D^\varepsilon$
the Teichm\"uller curve with discriminant $D$ and spin invariant
$\varepsilon$.  Surprisingly, the field
of moduli of $W_D^\varepsilon$ depends on whether $D$ is a square or
not, even though the spin invariant may be defined the same way in both
cases.

\bigskip\noindent{\bf Theorem \ref{Galoisthm}}. {\em If $D\equiv
1\pmod{8}$ is not a square, the field of moduli of the Teichm\"uller
curves $W_D^0$ and $W^1_D$ is $\QQ(\sqrt{D})$. Otherwise, the field of
moduli of $W^\varepsilon_D$ is $\QQ$}.

\bigskip
Theorem \ref{Galoisthm} allows to significantly simplify Bainbridge's
calculation of the orbifold Euler  characteristic of
$W_D^\varepsilon$ (\cite{Ba06}). It is still an open problem
to determine what the $W_D^\varepsilon$ are, for example, as algebraic
curves marked by their cusps and elliptic points. We solve this
problem for $D=13$ and $D=17$ which are the first nontrivial cases.
Note that the usual definitions of the Teichm\"uller curves, as
Teichm\"uller geodesics and as locus of eigenforms with a double zero,
are rather unsuitable for such a computation.

To determine $W_D^\varepsilon$, we explicitly compute an equation for
the universal family $\cXX$ of curves of genus $2$ parameterized by
$W_D^\varepsilon$ (\S~\ref{eqnsec}). We show that, for $D=13, 17$,
this family of curves is determined by its degenerations. The explicit
equation for $\cXX$ allows us also to compute the uniformizing
differential equations $L_{D}^\varepsilon$ of $W_D^\varepsilon$
(\S~\ref{desec}).

In \S~\ref{integralsec} we show that $L_D^\varepsilon$ has an integral
solution in a neighborhood of a cusp. We deduce this from the
existence of an integral solution modulo $p^n$ for all $n$. The
existence of approximate solutions follows from Katz' theorem on
expansion coefficients (\cite{Katz84}). The proof of the integrality
does not use the explicit equation for the differential equation
$L_D^\varepsilon$, but only the existence of an integral model for
$\cXX$. Therefore the proof applies to all Teichm\"uller curves of
genus zero, under a mild hypotheses. (See \S~\ref{integralsec} for the
precise statement.)

\bigskip\noindent{\bf Theorem \ref{integralthm}.} {\em Let
 $W_D^\varepsilon$ be a Teichm\"uller curve of genus zero. There
 exists a finite set $\cSS\subset \ZZ$ of primes such that $L_D^\ve$
 admits a holomorphic $\cSS$-integral solution.  }

\bigskip
The differential operators $L_D^\varepsilon$ are counterexamples to
several conjectures which fit into the circle of ideas around
Grothendieck's $p$-curvature conjecture.  Recall that a differential
operator $L$ is {\em globally nilpotent} if the reduction of $L$
(modulo $p$) has nilpotent $p$-curvature for almost all $p$
(\S~\ref{pcurvsec}). The nilpotence of the $p$-curvature may also be
characterized in terms of the existence of algebraic solutions of the
reduction of $L$ modulo $p$.  The conjecture of D.V.\ Chudnovsky and
G.V.\ Chudnovsky (Conjecture \ref{Chudconj}) may be seen as an attempt
to characterize the monodromy groups of globally nilpotent
differential equations. A  theorem of Katz (\cite{Katz}) implies
that $L_D^\varepsilon$ is globally nilpotent. The following result on
the monodromy group of $L_D^\varepsilon$ yields 
counterexamples to this conjecture.

\begin{samepage}
\bigskip\noindent{\bf Theorem \ref{Chudprop}.} {\em There exist
infinitely many $D$ such that:
\begin{itemize}
\item[(a)] $\Gamma_D^\varepsilon$ is nonarithmetic,
\item[(b)] $\Gamma_D^\varepsilon$ is not commensurable to a triangle
group,
\item[(c)] the uniformizing differential equation $L_D^\varepsilon$ of
$\Gamma_D^\varepsilon$ may be defined over $\QQ(\sqrt{D})$, and is
globally nilpotent.
\end{itemize}}
\end{samepage}

We note that (a) and (b)  of Theorem
\ref{Chudprop} are due to McMullen (\cite{McM03}). Our contribution
to this theorem is linking the Teichm\"uller curves to the theory of
differential equations. 
There is a finite list of discriminants $D$ such that
$L_D^\varepsilon$ lives on a curve of genus zero. In this case the
differential equation $L_D^\varepsilon$ does not have an algebraic solution
(Proposition \ref{Dworkprop}). This yields nonarithmetic
counterexamples to a conjecture of Dwork (Conjecture \ref{Dworkconj}).

An arithmetic counterexample to Dwork's conjecture has previously been
found by Krammer (\cite{Krammer}). Krammer's example is the
uniformizing differential equation of a Shimura curve
$C=\HH/\Gamma$. Krammer's strategy in computing the uniformizing
differential equation of $\Gamma$ is to exploit the existence of a
correspondence $C'\rightrightarrows C$. A similar strategy has been
used by Elkies (\cite{Elkies}).  Krammer's strategy is unlikely to
work for a nonarithmetic Fuchsian group (Remark \ref{Daanrem}).  The
reason why it is hard to find counterexamples to the conjecture of
Chudnovsky--Chudnovsky appears to be that it is difficult to find
globally nilpotent differential equations whose monodromy group is a
nonarithmetic Fuchsian group which is not commensurable to a triangle
group.

\section{Teichm\"uller curves in genus $2$}\label{Teichsec}
A {\em Teichm\"uller curve} is a generically injective, holomorphic
map $C \to \cMM_g$ which is geodesic for the Teichm\"uller
metric. Here $C$ is a smooth algebraic curve $C$ and $\cMM_g$ is the
moduli space of curves of genus $g$. Teichm\"uller curves arise
naturally from the study of dynamics of billiard tables.  We recall
the standard construction of Teichm\"uller curves. Let $X$ be a
Riemann surface of genus $g$ and $\omega$ a holomorphic $1$-form on
$X$. The orbit of $(X, \omega)$ under the natural action of
$\SL_2(\RR)$ on $\Omega {\mathcal T}_g$ projects to a geodesic disc
$\HH \to \cTT_g \to \cMM_g$ under the Teichm\"uller metric. If the
stabilizer $\Gamma$ of $(X, \omega)$ is a lattice in $\SL_2(\RR)$, the
image of the orbit in $\cMM_g$ is a curve. Its normalization
$C=\HH/\Gamma$ is then a Teichm\"uller curve.  A pair $(X, \omega)$ is
called a {\em translation surface}. A translation surface that
generates a Teichm\"uller curve is called a {\em Veech surface}. The
corresponding lattice $\Gamma$ is called the {\em affine group} of
$C$.

More generally, one could consider Teichm\"uller curves generated by
 $(X, q)$, where $q\in \Gamma(X, (\Omega_X^1)^{\otimes 2})$ is a
 quadratic differential form. After replacing $C$ by a cover of
 degree $2$ if necessary, one may assume that there exists a $1$-form
 $\omega$ with $q=\omega^2$. Therefore it is no restriction to only consider 
  Teichm\"uller curves $C$ which are generated by a translation surface 
$(X,\omega)$.

We let $\ol{C}$ be a smooth compactification of $C$ and $S := \ol{C}
\sms C$. We may replace $C$ by a finite, unramified cover such that
there exists a universal family $f:\cXX\to C$ of smooth curves of
genus $g$. Moreover, we may assume that $C = \HH/\Gamma$ with $\Gamma$
torsion free. Then $f:\cXX\to C$ extends to a family $\ol{f}:
\ol{\cXX} \to \ol{C}$ of stable curves (\cite{Mo06a}, \S~1.4).

By \cite{Mo06a}, Theorem 2.6, Teichm\"uller curves
parameterize curves whose Jacobian has a factor of rank $r$ with real
multiplication. Here $r \leq g$ is the degree of the trace field of
$\Gamma$. (Recall that the {\em trace field} is the (finite) extension
of $\QQ$ which is generated by the traces of all $A\in \Gamma$.)  If
$(X, \omega)$ is a translation surface generating a Teichm\"uller
curve, then $\omega$ is an eigenform for the real multiplication.

We suppose from now on that $g=2$ which is the case  we
study in this paper.  The  de Rham cohomology of $f:\ol{\cXX}\to \ol{C}$
decomposes as
\begin{equation}\label{decompeq}
\cHH^1_\dR(\ol{\cXX})=\cEE_1\oplus\cEE_2,
\end{equation}
where $\cEE_i$ are flat vector bundles of rank $2$ with logarithmic
poles in $S$.  We denote their Hodge filtration by $\cLL_i\subset
\cEE_i$.  In particular, we have
\begin{equation}\label{Hodgeeq}
f_\ast \omega_{\ol{\cXX}/\ol{C}}=\cLL_1\oplus\cLL_2.
\end{equation}

It is shown in \cite{Mo06a}, Theorem 2.6, that one of the vector
bundles $\cEE_i$, say $\cEE_1$, is {\em indigenous}. This means that
the {\em Kodaira--Spencer map}
\begin{equation} \label{KSeq}
\Theta: \cLL_1 \to \cEE_1 \stackrel{\nabla}{\to}
 \cEE_1 \otimes \Omega^1_{\ol{C}}(\log S) 
\to (\cEE_1/\cLL_1) \otimes \Omega^1_{\ol{C}}(\log S)
\end{equation}
is an isomorphism. (The local system corresponding to $\cEE_1$ is
called {\em maximal Higgs}.)  One may characterize Teichm\"uller curves 
generated by translation surfaces 
via the existence of an indigenous bundle $\cEE_1$ (\cite{Mo06a}, Theorem 5.5).

\begin{Rem} \label{indicritrem}  After replacing $C$ by a finite
unramified cover, there exist
isomorphisms $\cLL_1\simeq \Omega^1_{\ol{C}}(\log S)^{1/2}$ and
$\cEE_1/\cLL_1\simeq \Omega^1_{\ol{C}}(\log S)^{-1/2}$. In particular,
$\deg{\Omega^1_{\ol{C}}(\log S)}=2g(\ol{C})-2+|S|$ is even.
\end{Rem}

The rational numbers $\lambda_1:=1$ and
$\lambda_2:=\deg(\cLL_2)/\deg(\cLL_1)$ are called the {\em Lyapunov
exponents} of $C$. We will use the Lyapunov exponents
rather then the degrees of the line bundles $\cLL_i$  as invariants as the
Lyapunov exponents do not change if we replace $\ol{C}$ by a finite
cover.  We refer to \cite{BoMo05}, Proposition 8.5, for the proof that
these numbers coincides with the usual definition of the Lyapunov
exponents as growth rates of the Hodge norms along the Teichm\"uller 
geodesic flow.

We suppose that $(X, \omega)$ generates the Teichm\"uller curve $C$,
via the above construction. Then $X$ is a fiber of $\cXX$ and $\omega$
is a section of $\cLL_1$. There are two possibilities: $\omega$ has
either two simple zeros or one double zeros.  We denote by $\Omega
\cMM_2(1,1)$ (resp.\ $\Omega\cMM_2(2)$) the locus of pairs $(X,
\omega)$ where $\omega$ has two simple  zeros (resp.\ one double zero).

\bigskip\noindent
We recall McMullen's classification of primitive Teichm\"uller curves
in the moduli space of curves of genus $g=2$.  A
Teichm\"uller curve is {\em primitive} if it does not arise from a
family of curves of lower genus via a branched covering. 

There is a unique primitive Teichm\"uller curve corresponding to a
translation surface $(X, \omega)$ for which $\omega$ has two simple
zeros (\cite{McM06a}). In this case, the genus of $C$ is zero and the
connection on $\cEE_1$ has three regular singularities. Therefore the
affine group of this Teichm\"uller curve is a triangle group. This
family is a special case of the families studied in \cite{BoMo05}. The
curves parameterized by $C$ have real multiplication by
$\ZZ[\sqrt{5}]$.

There is an infinite family of primitive Teichm\"uller curves
corresponding to a translation surface $(X, \omega)$ for which
$\omega$ has a double zero.  McMullen (\cite{McM05a}) shows that these
Teichm\"uller curves are characterized by two invariants: the
discriminant $D$ and the spin invariant $\epsilon$. We recall the
definition of these invariants.

 Let $K$ be a totally real number field with $[K:\QQ]=2$, and let
$\cOO_D\subset K$ be an order of discriminant $D$.  We denote by
$W_D\subset \cMM_g$ the locus of curves of genus $2$ whose Jacobian
admits real multiplication by $\cOO_D$, and which carry an eigenform
$\omega$ for the real multiplication which has a double zero at one of
the $6$ Weierstra\ss\ points. Then $W_D=\emptyset$ if $D\leq 4$. For
$D\geq 5$, every irreducible component of $W_D$ is a Teichm\"uller
curve. If $D\equiv 1\pmod{8}$ and $D\neq 9$, then $W_D$ is the
disjoint union of two curves $W_D^\varepsilon$, where $\varepsilon\in
\ZZ/2\ZZ$ is the spin invariant. Otherwise, $W_D$ is irreducible. The
spin invariant $\ve$ may also be defined more conceptually as the
Arf invariant of some real multiplication endomorphism acting on
$\ZZ/2\ZZ$-cohomology (\cite{McM05a}, \S~5).

To avoid a case distinction, we denote all primitive Teichm\"uller
curves by $W_D^\varepsilon$ even if $W_D$ is irreducible. The curve
$W_D^\varepsilon$ may be defined over a number field. (See Theorem
\ref{Galoisthm} for a precise statement.)

\section{Computation of the Lyapunov exponents}\label{lyasec}
We let $f:\ol{\cXX}\to \ol{C}$ be the universal family over a (finite cover of
a) Teichm\"uller curve, as in \S~\ref{Teichsec}.  Let $(X, \omega)$ be
a translation surface generating this Teichm\"uller curve.  In this
section, we compute the Lyapunov exponents $\lambda_i$ of $C$ by using
a result of Catanese and Pignatelli (\cite{CaPi07}) on the structure
of genus-$2$ fibrations. This gives a new, shorter proof of a result
of Bainbridge (\cite{Ba06}) in the case that $(X,
\omega)\in\Omega\cMM_2(2)$. In the case that $(X,
\omega)\in\Omega\cMM_2(1,1)$ and $C$ is imprimitive the result is
new. The affine group of
the (unique) primitive Teichm\"uller curve 
corresponding to a translation surface in $\Omega\cMM_2(1,1)$ is a
triangle group. Its Lyapunov exponents were calculated in
\cite{BoMo05}.

\begin{Lemma}\label{singfiberslem}
Let $c\in \ol{C}$ be a point such that the fiber  $X=\ol{\cXX}_c$ is singular.
\begin{itemize}
\item[(a)]  The curve $X$ does not
contain a separating node. In particular, $X$ does not consist of
two elliptic curves meeting in one point.
\item[(b)] If $\ol{C}$ is a primitive Teichm\"uller curve 
generated by a Veech surface in $\Omega\cMM_2(2)$
then $X$
consists of a projective line which intersects itself in two points.
\end{itemize}
 \end{Lemma}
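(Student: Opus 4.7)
The plan is to exploit the indigenous property of $\cEE_1$ from (\ref{KSeq}). Taking degrees in the Kodaira--Spencer isomorphism gives
$$2\deg\cLL_1 \;=\; \deg\Omega^1_{\bar{C}}(\log S) \;=\; 2g(\bar{C}) - 2 + |S|.$$
In particular, the local monodromy of $\cEE_1$ at every $c\in S$ must be nontrivial: otherwise $\cEE_1$ would admit a flat extension across $c$ without logarithmic pole, the Kodaira--Spencer map would factor through $\Omega^1_{\bar{C}}(\log(S\sms\{c\}))$, and the displayed degree identity would fail.

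For part (a), I would suppose that $\bar{\cXX}_c$ contains a separating node $p$. A genus count forces $\bar{\cXX}_c = Y_1 \cup_p Y_2$ with $p_a(Y_i) = 1$ for $i=1,2$. By Picard--Lefschetz the vanishing cycle attached to $p$ is null-homologous and contributes trivially to monodromy on $H^1$. If $p$ is the only node of $\bar{\cXX}_c$, the total monodromy on $H^1$, and in particular on the subsystem $\cEE_1$, is trivial, contradicting the first paragraph; this already disposes of the special case of two elliptic curves meeting at a point. For the remaining reducible configurations (where some $Y_i$ carries additional non-separating nodes coming from a nodal rational component), I would combine the splitting of the limit mixed Hodge structure $H^1(\bar{\cXX}_c) = H^1(Y_1)\oplus H^1(Y_2)$ with the specialization of the eigenform $\omega \in \cLL_1$: since a holomorphic differential on a smooth elliptic component has no zeros while $\omega$ has zeros (either two simple or one double), $\omega$ cannot specialize nontrivially onto such a component, and a degree count inside the dualizing sheaf $\omega_{\bar{\cXX}_c}$, together with the fact that $\omega$ is an $\cOO_D$-eigenform compatible with the decomposition $\cEE_1\oplus\cEE_2$, rules out every remaining configuration.

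For part (b), part (a) together with $c\in S$ leaves only two possibilities: $\bar{\cXX}_c$ is irreducible of geometric genus one with a single non-separating node, or irreducible of geometric genus zero with two non-separating nodes. The latter is the claimed conclusion. To exclude the former, I would pass to the N\'eron model of the Jacobian near $c$: in the one-node case its identity component is a semi-abelian extension
$$1 \to \GG_m \to G \to E \to 0,$$
where $E$ is the elliptic curve obtained by normalization. Any endomorphism of $G$ restricts to multiplication by a rational integer on the toric subgroup $\GG_m$, so the real multiplication $\cOO_D \subset \End(\mathrm{Jac})$, which extends to the N\'eron model, would induce a ring homomorphism $\cOO_D \to \ZZ$ with $1\mapsto 1$. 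For a primitive $W_D^\varepsilon$ in $\Omega\cMM_2(2)$ no such homomorphism exists (in particular $\sqrt{D}\notin\ZZ$ whenever $\cOO_D\otimes\QQ$ is a field), and this rules out the one-node case.

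The main obstacle is the reducible configurations in part (a) that mix a separating node with a non-separating one (an elliptic component glued to a nodal rational, or two nodal rationals glued at a point); in these the monodromy on $H^1$ is nontrivial, so one cannot reach a contradiction purely from the residue argument in the first paragraph and one must invest in a careful analysis of the limit mixed Hodge structure and of how $\omega$ specializes onto each irreducible component.
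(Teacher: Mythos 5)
Your route is genuinely different from the paper's: the paper proves (a) by pure flat geometry (a core curve whose squeezing would produce a separating node would bound a planar polygon whose side-identification vectors must sum to zero, which is impossible), handling all configurations at once, and quotes (b) from M\"oller's finiteness paper. Your monodromy argument correctly disposes of the case where the separating node is the \emph{only} node (two elliptic curves meeting in a point), and your N\'eron-model argument for the one-node irreducible fiber is sound and is in fact close in spirit to the argument behind the cited result. However, as written the proposal has two genuine gaps.

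First, in (a) the mixed configurations are not actually ruled out; you acknowledge this yourself, but the tools you name do not obviously close the case. For $Y_1=E$ smooth elliptic and $Y_2$ a nodal rational curve glued at $p$, one \emph{can} finish cleanly: the logarithm $N$ of the local monodromy commutes with $\cOO_D$, hence is $K$-linear on the $2$-dimensional $K$-vector space $H^1(X,\QQ)$, so its $\QQ$-rank is $0$ or $2$, never $1$; this configuration has exactly one non-separating vanishing cycle and is excluded. But for two nodal rational curves glued at a separating point, $N$ has $\QQ$-rank $2$, $W_0H^1$ has rank $2$, and neither the limit mixed Hodge structure nor $K$-linearity of $N$ gives a contradiction; moreover the ``degree count in $\omega_{\ol{\cXX}_c}$'' is delicate because the limit of the eigenform is only defined up to scale and may vanish identically on a component (indeed on such a fiber $H^0(\omega_X)$ splits as a sum of two $1$-dimensional pieces, one per component). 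Some genuinely new input is needed here, and the flat-geometric argument of the paper is precisely that input.

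Second, in (b) your case enumeration is incomplete: a stable genus-$2$ curve with no separating node can also be two smooth rational components meeting in three points (the ``theta'' curve). Its generalized Jacobian is a $2$-dimensional torus, and $\cOO_D$ acts perfectly well on a rank-$2$ character lattice, so the ``restriction to $\GG_m$ lands in $\ZZ$'' argument does not apply. In the flat picture this configuration never arises because a periodic direction on a surface in $\Omega\cMM_2(2)$ has at most two cylinders and the complement of the core curves is connected (all saddle connections pass through the unique zero), forcing the singular fiber to be irreducible; from the purely algebro-geometric side you would have to exclude the theta curve by a separate argument.
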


\begin{proof}
This lemma follows from some well-known results on the geometry of
translation surfaces $(X,\omega)$ which we quickly recall.  For a
given direction in $v \in \RR^2$ one considers the geodesics for
$\omega$ with slope $v$. A direction is called {\em periodic}, if all
geodesic are closed or join zeros of $\omega$. The translation surface
decomposes into cylinders in the direction of a periodic direction,
as, for example, in the horizontal direction in
Figure~\ref{cap:spl_prototype}. A geodesic in the interior of a
cylinder is called a {\em core curve}.  

 The singular fibers of $\ol{f}:\ol{\cXX}\to \ol{C}$ correspond
bijectively to periodic directions, up to the action of the affine
group $\Gamma$.  Topologically, singular fibers of
$\ol{f}:\ol{\cXX}\to \ol{C}$ are obtained by squeezing the core curves
of the cylinder decomposition of the corresponding flat surface in
some direction. This is justified in \cite{Ma75}, where flat metrics
are related to hyperbolic metrics.

 Suppose that a
core curve of a cylinder $Z$ separates $(X, \omega)$ into two planar
polygons $P_1$ and $P_2$. This implies that all sides of $P_1$, except
for the boundary of $Z$, come in pairs that are glued by translations
preserving the global orientation of the plane.  Fix an orientation of
the boundary of $P_1$. Sides glued together consequently have opposite
orientation on the boundary. We have obtained a contradiction since
the translation vectors of all sides of $P_1$ have to add up to zero.

The second statement follows from \cite{Mo07}, Theorem~2.1 and
Corollary~2.2.
\end{proof}

The relative canonical map of a family $\ol{f}: \ol{\cXX} \to \ol{C}$
of genus-two curves defines a rational map $\varphi:\ol{\cXX} - - >
\PP:=\PP(V_1)$, where $V_1 = f_* \omega_{\cXX/C}$.  It is known since
the work of Horikawa how to reconstruct $\ol{f}$ from the ruled
surface $\PP$ and covering data. We follow the recent nice account in
\cite{CaPi07}.

 By Lemma~\ref{singfiberslem} the cokernel
$$\tau= {\rm coker}(S^2( f_* \omega_{\cXX/C}) \to f_*
\omega^2_{\cXX/C} ) $$ is zero and for the same reason $\varphi$ is
actually a morphism.  In this situation, the relative canonical ring
of the fibration is uniquely determined (\cite{CaPi07}, page 1014 and
Proposition 4.8) by a morphism
$$\delta: \det(V_1)^2 \to S^6(V_1).$$
Since in our situation $V_1 = \cLL_1 \oplus \cLL_2$ splits into
eigenspaces of real multiplication, $\delta$ is a direct sum
of maps between line bundles. The structure of $\delta$
is described in the following Propositions~\ref{deltaprop} and
\ref{deltaprop2}.

\begin{Prop}\label{deltaprop}
Let $\cXX \to C$ be the universal family over a Teichm\"uller curve
generated by a translation surface in $\Omega \cMM_2(2)$.  Then we
have a decomposition
$$ \delta=\oplus_{k=0}^6 \delta(k): \cLL_1^2 \otimes \cLL_2^2
\to \oplus_k (\cLL_1^{6-k} \otimes \cLL_2^{k}) $$
with the properties
\begin{itemize}
\item[(a)] the map $\delta(0)$ is identically zero, and
\item[(b)] the map $\delta(1)$ is an isomorphism.
\end{itemize}
\end{Prop}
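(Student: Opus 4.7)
The plan is to identify $\delta$ with the equation of the branch divisor $B \subset \PP := \PP(V_1)$ of the canonical double cover $\varphi : \ol{\cXX} \to \PP$, and to read off (a) and (b) from the position and multiplicity in $B$ of the section $\sigma \subset \PP$ of $\PP \to \ol{C}$ corresponding to the line subbundle $\cLL_1 \subset V_1$.

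Since $\tau = 0$ by Lemma \ref{singfiberslem}(a), the description of genus-two fibrations in \cite{CaPi07} realises $\ol{\cXX}$ as a double cover of $\PP$ branched along a divisor $B \in |\cOO_{\PP}(6) \otimes \pi^*(\det V_1)^{-2}|$. Pushing forward along $\pi$ and using $\pi_* \cOO_\PP(6) = S^6(V_1)$ identifies the equation of $B$ with an element of $\Hom((\det V_1)^2, S^6(V_1))$, namely $\delta$ itself. Under the splitting $V_1 = \cLL_1 \oplus \cLL_2$ one obtains the decomposition $S^6(V_1) = \bigoplus_{k=0}^6 \cLL_1^{6-k} \otimes \cLL_2^k$; in fibre coordinates $[u : v]$ adapted to this splitting, with $\sigma = \{v = 0\}$, the component $\delta(k)$ is the coefficient of the monomial $u^{6-k} v^k$ in the equation of $B$. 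Thus (a) is equivalent to $\sigma \subset B$, and (b) to $\sigma$ appearing in $B$ with multiplicity exactly one.

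For (a), on each smooth fibre $X_c$ the eigenform $\omega_c \in \cLL_1|_c$ has a double zero at a Weierstra\ss{} point $p_c$ and spans the one-dimensional space of holomorphic $1$-forms on $X_c$ vanishing at $p_c$. Therefore the kernel of $\mathrm{ev}_{p_c} : V_1|_c \to \omega_{X_c}|_{p_c}$ is exactly $\cLL_1|_c$, so the canonical map sends $p_c$ to the corresponding point $\sigma(c) \in \PP(V_1|_c)$. Since $p_c$ is a Weierstra\ss{} point, it is a ramification point of $\varphi_c$, so $\sigma(c) \in B$ for every $c \in C$. By closure, $\sigma \subset B$ over all of $\ol C$, giving $\delta(0) \equiv 0$.

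For (b), the multiplicity of the irreducible component $\sigma$ in $B$ is constant, hence may be computed at a generic $c \in C$. There $X_c$ is smooth of genus two with six distinct Weierstra\ss{} points, and $B \cdot F_c \subset F_c \cong \PP^1$ consists of the six reduced images of these Weierstra\ss{} points. Thus $B$ meets $\sigma$ transversally at $\sigma(c)$, so $\sigma$ occurs in $B$ with multiplicity one, and $\delta(1)$ is a nowhere vanishing morphism of line bundles on $\ol C$, hence an isomorphism. The main technical point is the translation between the abstract map $\delta$ and the geometric branch divisor $B$ via the Catanese--Pignatelli description under the hypothesis $\tau = 0$ from Lemma \ref{singfiberslem}; once this is in place, both (a) and (b) reduce to elementary facts about the canonical map of a hyperelliptic genus-two curve at a Weierstra\ss{} point.
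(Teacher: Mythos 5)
Your identification of $\delta$ with the equation of the branch divisor $B\subset\PP(V_1)$ of the relative canonical double cover is the global form of exactly the computation the paper performs in an inhomogeneous fibre coordinate ($y^2=\sum_k c_k x^k$ with $x=s_1/s_2$), and your proof of (a) is correct: on every smooth fibre the eigenform's double zero sits at a Weierstra\ss\ point whose canonical image is $\sigma(c)$, so $\sigma\subset B$ over $C$, and the vanishing of $\delta(0)$ on the dense open set $C$ propagates to $\ol{C}$ by continuity.

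Part (b), however, has a genuine gap. That $\delta(1)$ is an isomorphism of line bundles means it is \emph{nowhere} vanishing on $\ol{C}$, which is equivalent to: for \emph{every} $c\in\ol{C}$ the point $\sigma(c)$ occurs with multiplicity exactly one in the divisor $B\cap F_c$ on the fibre $F_c$. What you actually establish --- that $\sigma$ occurs with multiplicity one as an irreducible \emph{component} of $B$ --- is equivalent only to $\delta(1)\not\equiv 0$: writing $B=\sigma+B'$ with $\sigma\not\subset\mathrm{Supp}(B')$, the section $\delta(1)$ vanishes precisely at those $c$ where $B'$ meets $\sigma$, and multiplicity one of the component only forces this locus to be finite, not empty. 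Your fibrewise argument does apply verbatim at every $c\in C$ (all such fibres are smooth with six distinct Weierstra\ss\ points), so the real gap is at the cusps $c\in S=\ol{C}\setminus C$, where the fibre is a singular stable curve and one must still rule out that the specialization of the double zero of the eigenform collides with another branch point or a node of the degenerate fibre; this is where the structure of the singular fibres must enter (compare Lemma \ref{singfiberslem}(b) and the normal form of Proposition \ref{singnormalprop}, whose quintic has nonzero leading coefficient --- the paper's own proof of (b) is a pointwise statement at an arbitrary $p\in\ol{C}$, including the cusps). Note also that you cannot repair this by a degree count ($\delta(1)$ a nonzero section of a degree-zero line bundle is nowhere zero), since the degree relation $\deg\cLL_1=3\deg\cLL_2$ is exactly what Corollary \ref{lyacor} deduces \emph{from} part (b); invoking it here would be circular.
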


\begin{proof}
The decomposition of $\delta$ follows immediately from
the decomposition of $f_* \omega_{\ol{\cXX}/\ol{C}}$.

Fix a point $p \in \ol{C}$ and let $t$ a local parameter
at $p$. Choose local sections $s_i$ of $\cLL_i$ in
a neighborhood of $p$, and write
$$\delta(k)(s_1^{\otimes 2}s_2^{\otimes 2}) = c_{k}(t)
s_1^{\otimes 6-k}s_2^{\otimes(k)}$$ for functions $c_k$,
$(k=0,\ldots,6)$. The choice of local coordinates $c_k$ is equivalent to
representing $\ol{f}$ in a neighborhood of $p$ in terms of the inhomogeneous
coordinate $x=s_1/s_2$ as 
\begin{equation} \label{HENF}
y^2 = \sum_{k=0}^6 c_{k} x^k.
\end{equation}
Compare to \cite{CaPi07}, \S~4.

Recall from \S~\ref{Teichsec} that there exists a section $\omega_1$ of
$\cLL_1$ which has a double zero. The choice of the coordinate $x$
implies that $\omega_1$ has a double zero at $x=\infty$, hence 
$c_6=0$. This implies (a). Since (\ref{HENF}) represents a family of
curves of genus $2$, we conclude that $c_5\neq 0$. This implies (b).
\end{proof}

A similar proof yields the analogous result in the case of
Teichm\"uller curves generated by a translation surface in
$\Omega\cMM_2(1,1)$.

\begin{Prop}\label{deltaprop2}
Let $\cXX \to C$ be the universal family over a Teichm\"uller
curve generated by a translation surface in $\Omega M_2(1,1)$. 
Then we have a decomposition
$$ \delta=\oplus_{k=0}^6 \delta(k): \cLL_1^2 \otimes \cLL_2^2 \to
\oplus_k (\cLL_1^{6-k} \otimes \cLL_2^{k}),$$ and $\delta(0)$ is an
isomorphism.
\end{Prop}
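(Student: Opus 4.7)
The strategy is to mimic the proof of Proposition \ref{deltaprop}, changing only the input coming from the zero-structure of the generating eigenform. The decomposition $\delta = \oplus_k \delta(k)$ into seven components follows formally from $V_1 = \cLL_1 \oplus \cLL_2$, exactly as in the previous proposition, and does not require a new argument.

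To show that $\delta(0)$ is an isomorphism, I would fix $p \in \ol{C}$, choose local trivializations $s_1, s_2$ of $\cLL_1, \cLL_2$ near $p$, and write the family in the Weierstrass form $y^2 = \sum_{k=0}^6 c_k(t)\, x^k$ with $x = s_1/s_2$, so that $\delta(k)(s_1^{\otimes 2} s_2^{\otimes 2}) = c_k(t)\, s_1^{\otimes 6-k} s_2^{\otimes k}$ as in Proposition \ref{deltaprop}. The essential new ingredient is the following. In $\Omega\cMM_2(1,1)$ the eigenform $\omega_1 \in \cLL_1$ has two simple zeros, and since $\iota^* \omega_1 = -\omega_1$ for the hyperelliptic involution $\iota$, these zeros lie at a pair of $\iota$-conjugate \emph{non-Weierstrass} points. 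Under the canonical map both zeros are sent to a single $x$-value, which is therefore not a branch point of the hyperelliptic cover. Adopting the same normalization of $x$ as in the proof of Proposition \ref{deltaprop} --- that is, placing this common $x$-coordinate of the zeros at $x = \infty$ --- one sees that $x = \infty$ is not a branch point here (in contrast to the $(2)$-case, where the double zero forced $x = \infty$ to be a Weierstrass point). Consequently the coefficient of the Weierstrass equation corresponding to $\delta(0)$ is \emph{nonzero} at $t = p$.

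The main obstacle is to extend this to the boundary points $p \in S$, where the fiber of $\ol{f}$ is singular. Here Lemma \ref{singfiberslem}(a) excludes separating nodes, so only irreducible nodal fibers occur, and a direct inspection of the Weierstrass equation of the normalization should show that the relevant coefficient still does not vanish at such cusps. Since $\delta(0)$ is then a nowhere-vanishing homomorphism of line bundles on $\ol{C}$, it is an isomorphism.
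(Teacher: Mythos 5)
Your treatment of the interior points is exactly the paper's intended argument (the paper itself only remarks that ``a similar proof'' to that of Proposition~\ref{deltaprop} applies): since $\div(\omega_1)$ is an effective canonical divisor consisting of two distinct points, the fibre of the canonical map over $x=\infty$ is unramified, so the coefficient $c_6$ in \eqref{HENF} is a unit there and $\delta(0)$ is invertible. The one step you hedge --- the cusps --- is genuinely needed (the isomorphism must hold over all of $\ol{C}$, otherwise the degree count in Corollary~\ref{lyacor} fails), but the mechanism you propose for it is not quite the right one. Stability of the degenerate fibre does not by itself prevent $x=\infty$ from becoming a branch point: $y^2=(\text{degree }5\text{ with double roots})$ is a perfectly good stable fibre with $c_6=0$. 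Also, Lemma~\ref{singfiberslem}(a) does not force irreducibility --- two rational components meeting in three points carry no separating node. What actually closes the step is the same flat-geometry input the paper uses in the proof of Proposition~\ref{singnormalprop}: the degenerate fibre arises by squeezing the core curves of the cylinders of a periodic direction, so the limit of $\omega_1$ is a section of the dualizing sheaf with simple poles at (the branches of) the nodes and with its two simple zeros still at two \emph{distinct} points of the smooth locus, since the cone points lie off the core curves and persist in the limit. Because the dualizing sheaf of a stable genus-$2$ curve without separating nodes is base-point-free of degree $2$ and defines the degree-$2$ canonical map, $\div(\omega_1)$ is again a reduced, hence unramified, fibre of that map, and $c_6$ does not vanish at the cusps either. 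With that substitution your proof is complete and coincides with the paper's.
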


We now use Propositions~\ref{deltaprop} and \ref{deltaprop2} to 
calculate the Lyapunov exponents for
Teich\-m\"uller curves in genus two. 

\begin{Cor}\label{lyacor}
Let $C$ be a Teichm\"uller curve in genus $g=2$ generated by the
translation surface $(X,\omega)$.  The Lyapunov exponents are
\[
(\lambda_1, \lambda_2)= \begin{cases}\{1,1/3\}&\text{ if }\, (X,\omega)
\in \Omega M_2(2),\\ 
\{1, 1/2\}& \text{ if }\, (X,\omega) \in \Omega
M_2(1,1).
\end{cases}
\]
\end{Cor}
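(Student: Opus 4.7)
The plan is straightforward: translate the isomorphism statements in Propositions~\ref{deltaprop} and \ref{deltaprop2} into linear relations between $\deg\cLL_1$ and $\deg\cLL_2$, and then read off $\lambda_2 = \deg(\cLL_2)/\deg(\cLL_1)$ directly; the value $\lambda_1 = 1$ is fixed by definition, so only the second exponent requires any calculation.

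The mechanism in both cases is elementary: an isomorphism between two line bundles on a smooth projective curve forces equality of their degrees. Each of the propositions singles out a specific graded component
\[
\delta(k)\colon \cLL_1^{\otimes 2}\otimes\cLL_2^{\otimes 2}\longrightarrow \cLL_1^{\otimes(6-k)}\otimes\cLL_2^{\otimes k}
\]
of the Catanese--Pignatelli morphism and asserts it is an isomorphism of line bundles on $\ol{C}$. This produces a single linear equation in $\deg\cLL_1$ and $\deg\cLL_2$; since $\deg\cLL_1 \neq 0$ (it is half the degree of $\Omega^1_{\ol{C}}(\log S)$, by the indigenous property and Remark~\ref{indicritrem}), the equation pins down the ratio, hence $\lambda_2$.

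Concretely, for $(X,\omega)\in\Omega\cMM_2(2)$ one applies Proposition~\ref{deltaprop}(b): the isomorphism $\delta(1)$ yields the equation that produces $\lambda_2 = 1/3$. For $(X,\omega)\in\Omega\cMM_2(1,1)$ one applies Proposition~\ref{deltaprop2}: the isomorphism $\delta(0)$ yields $\lambda_2 = 1/2$. The vanishing $\delta(0)=0$ from Proposition~\ref{deltaprop}(a) is not needed for the degree count in the first case; it served the geometric role of encoding the Weierstra\ss\ point at infinity (the coefficient $c_6 = 0$ in the relative hyperelliptic equation), which fixed the shape of $\delta$ in the first place. The actual numerical content is carried by the \emph{non-vanishing} of the next graded piece.

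I do not anticipate any real difficulty in this step. The substantive geometric work---the Catanese--Pignatelli realization of the genus-$2$ family as a double cover of a ruled surface, the exclusion of separating nodes in singular fibers via Lemma~\ref{singfiberslem}, and the identification of the correct graded pieces of $\delta$ as zero or nowhere-vanishing---has already been packaged into Propositions~\ref{deltaprop} and \ref{deltaprop2}. Corollary~\ref{lyacor} itself reduces to comparing exponents of $\cLL_1$ and $\cLL_2$ on the two sides of a single line bundle isomorphism, which is pure bookkeeping.
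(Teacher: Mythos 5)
Your proposal is correct and follows the paper's own proof exactly: the paper likewise converts the distinguished isomorphism of Proposition~\ref{deltaprop} (resp.\ Proposition~\ref{deltaprop2}) into the degree relation $2\deg\cLL_1+2\deg\cLL_2=\deg\cLL_1+5\deg\cLL_2$ (resp.\ $=6\deg\cLL_2$) and divides by $\deg\cLL_1$, and your observation that $\deg\cLL_1\neq 0$ via the indigenous property is exactly the justification that the paper leaves implicit. One small caution on bookkeeping: read literally, the target of $\delta(1)$ in Proposition~\ref{deltaprop} is $\cLL_1^{5}\otimes\cLL_2$, which would give $\lambda_2=3$ rather than $1/3$; the degree equation the corollary actually needs has the exponents of $\cLL_1$ and $\cLL_2$ interchanged, so make sure your identification of which graded piece is the isomorphism is consistent with $\omega_1$ (the eigenform with the double zero) being the section of $\cLL_1$.
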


\begin{proof}
By definition, we have that $\lambda_1=1$ and
$\lambda_2=\deg(\cLL_2)/\deg(\cLL_1)$. In the situation of Proposition
\ref{deltaprop} we have that
\[
2\deg(\cLL_1) +2\deg(\cLL_2) = \deg(\cLL_1) + 5 \deg(\cLL_2).
\]
In the situation of Proposition \ref{deltaprop2}, we have that
\[
2\deg(\cLL_1)+2\deg(\cLL_2)=6\deg{\cLL_2}.
\]
This implies the statement.
\end{proof}

\section{Prototypes for singular
fibers and the Galois action on the set of components of
$W_D$}\label{protosec} In this section, we consider primitive
Teichm\"uller curves corresponding to a translation surface $(X,
\omega)\in \Omega\cMM_2(2)$.  Let $W_D$ be as in \S
\ref{Teichsec}. Recall that $W_D$ is the disjoint union of two
Teichm\"uller curves $W_D^\varepsilon$ if $D\equiv 1\pmod{8}$ and is a
Teichm\"uller curve otherwise. The goal of this section is to show
that if $D\equiv 1\pmod{8}$ is not a square the field of moduli
$W_D^\varepsilon\to \cMM_2$ is not $\QQ$. A key
ingredient is a normal form for the degenerate fibers of the
corresponding universal family $\ol{f}:\ol{\cXX}^\varepsilon(D)\to
\ol{C}(D)$. We usually drop $D$ and $\varepsilon$ from the notation, if
they are clear from the context.

McMullen (\cite{McM05a}) shows that the cusps of $W_D$ correspond to
so-called splitting prototypes. We recall the definition.

\begin{Defi}\label{protodef}
A quadruple of integers $(a,b,c,e)$ is a {\em splitting prototype of
discriminant} $D$ if
\begin{align*}
&D=e^2+4bc, \qquad 0 \leq a < \gcd(b,c),\qquad 0<b,\\
&0< c, \qquad c+e < b \quad \text{ and } \quad\gcd(a,b,c,e)=1.\\
\end{align*}
\end{Defi}

To a splitting prototype $(a,b,c,e)$ of discriminant $D$, we may
associate a translation surface as in Figure~\ref{cap:spl_prototype},
where vertical sides are glued by horizontal translations and where
$\lambda = (e +\sqrt{D})/2$.
\begin{figure}[here]
\label{fig:spl_prototype}
\begin{center}
\psfrag{a}{$\lambda$}
\psfrag{b}{$\lambda$}
\psfrag{c}{$(a,c)$}
\psfrag{d}{$(b,0)$}
\epsfig{figure=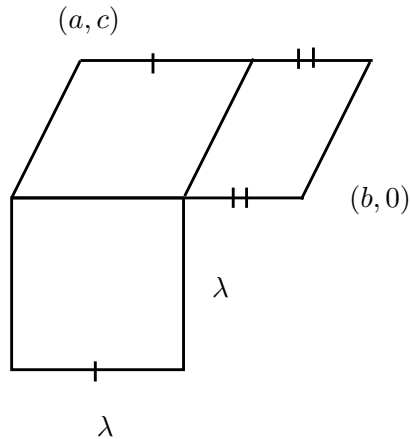,width=5cm}
\end{center}
\caption{A translation surface attached to a splitting prototype}
\label{cap:spl_prototype}
\end{figure}

Let $(a,b,c,e)$ be a splitting prototype of discriminant $D$. Write
$D=EF^2$ with $E$ square free. Then it follows from \cite{McM05a},
Proposition 5.3, that the corresponding translation surface has
spin invariant
\begin{equation} \label{spinformulaeq}
 \ve = \frac{e-F}{2} + (c+1)(a+b+ab) \pmod{2}.
\end{equation}

\begin{Prop}\label{singnormalprop}
\begin{itemize}
\item[(a)]
The singular fiber corresponding to  $(a,b,c,e)$ 
may be given by
\begin{equation} \label{normalformeq}
y^2 = (x-\mu)(x-\mu-b^2)^2(x-\mu-\lambda^2)^2 \quad
\text{where} \quad
\mu = \frac{\lambda (\lambda\sigma(\lambda)-b^2)}{\sigma(\lambda)
-\lambda}.
\end{equation}
Here $\sigma$ denotes  the generator of $\Gal(\QQ(\sqrt{D}), \QQ)$.
\item[(b)] We may, furthermore, suppose that $\omega_1={\rm d}x/y$ and
$\omega_2=x\,{\rm d}x/y$ are eigenforms for real multiplication by
${\mathcal O}_D$.
\item[(c)] The variable $x$ is uniquely determined by (a) and (b), up
to multiplication by a constant.
\end{itemize}
\end{Prop}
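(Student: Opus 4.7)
By Lemma~\ref{singfiberslem}(b), the singular fiber is a nodal rational curve with exactly two nodes $N_1, N_2$, and any such stable genus-$2$ curve admits a hyperelliptic model $y^2 = (x-x_0)(x-x_1)^2(x-x_2)^2$, where $x_0, \infty$ are the two remaining Weierstra\ss\ points and $x_i$ is the $x$-coordinate of $N_i$. Placing the double zero of the eigenform $\omega$ at $\infty$, a direct local computation shows that $dx/y$ vanishes to order two at $\infty$, so $dx/y$ and $x\,dx/y$ form a basis of global sections of the dualizing sheaf.

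The first step is a residue calculation: $\mathrm{Res}_{N_i}(dx/y) = \pm 1/[\sqrt{x_i-x_0}\,(x_i-x_j)]$ and $\mathrm{Res}_{N_i}(x\,dx/y) = \pm x_i/[\sqrt{x_i-x_0}\,(x_i-x_j)]$ for $\{i,j\} = \{1,2\}$. By Deligne's description of the degeneration of periods, the residues of $\omega_1$ at the nodes compute the periods of $\omega$ along the vanishing cycles, which are the core curves of the horizontal cylinders of $(X,\omega)$. For the prototype $(a,b,c,e)$ these widths are $b$ and $\lambda$ (see Figure~\ref{cap:spl_prototype}), so after a rescaling of $(x,y)$ we may take $x_1 - x_0 = b^2$ and $x_2 - x_0 = \lambda^2$. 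This yields the form of the polynomial in part~(a) with $\mu := x_0$ still to be determined.

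To fix $\mu$, the plan is to use condition (b). The residue vectors $(\mathrm{Res}_{N_1}\omega_i, \mathrm{Res}_{N_2}\omega_i)$ for $i=1,2$ must be simultaneous eigenvectors of the integer endomorphism of the residue lattice that realizes real multiplication on the limit Jacobian, with eigenvalues $\lambda$ and $\sigma(\lambda)$ respectively. Equivalently, the residues of $\omega_2$ must lie in ratio $b:\sigma(\lambda)$. Combined with the formulas above, this gives the linear relation $\lambda x_2 = \sigma(\lambda)\, x_1$, which together with $x_1 = \mu+b^2$ and $x_2 = \mu+\lambda^2$ solves for the claimed value of $\mu$. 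Part~(c) then follows because any other coordinate $x'$ satisfying (a) and (b) with the same data differs from $x$ by an affine transformation of $\PP^1$, and matching the three distinguished $x$-values forces $x' = cx$ for a nonzero scalar $c$.

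The main obstacle will be the clean identification of the integer endomorphism of the residue lattice induced by real multiplication by $\cOO_D$ in terms of the prototype $(a,b,c,e)$; concretely, one must verify that the correct integer matrix has $(b,\lambda)$ and $(b,\sigma(\lambda))$ as eigenvectors, which is what translates the combinatorial prototype data into a relation among residues. Tracking sign conventions in the residue calculation and in Deligne's period identification also requires care, but once these are fixed the remaining algebraic manipulations are routine.
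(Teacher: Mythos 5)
Your strategy is essentially the one the paper uses: identify the stable fiber as a two-noded rational curve via Lemma~\ref{singfiberslem}(b), read off the residues of the first eigenform at the nodes as the widths $b$ and $\lambda$ of the horizontal cylinders, use the double zero at $\infty$ to normalize the nodes to sit over $x_0+b^2$ and $x_0+\lambda^2$, and then determine $\mu=x_0$ from the second eigenform. (The paper does the same computation on the normalization $\PP^1_z$ with $x=z^2$, where the double-zero condition appears as $\lambda p=bq$; your residue formula on the singular model is equivalent.) However, the step you defer as the ``main obstacle'' --- writing down the integer endomorphism of the residue lattice realizing real multiplication --- is both unnecessary and the place where your argument goes astray. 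The paper's shortcut is that $\omega_1$ and $\omega_2$ are interchanged by $\sigma\in\Gal(\QQ(\sqrt{D})/\QQ)$ while the nodes and the rational width $b$ are $\sigma$-invariant; hence the residues of $\omega_2$ are simply the $\sigma$-conjugates of those of $\omega_1$, i.e.\ $(\lambda,b)\mapsto(\sigma(\lambda),b)$ at the two nodes. No identification of an integer matrix in terms of the prototype is needed.

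Concretely, your determination of $\mu$ contains an index slip. With your normalization $x_1-x_0=b^2$, $x_2-x_0=\lambda^2$, your own residue formula forces the node over $x_1$ to carry the $\omega_1$-residue proportional to $\lambda$ and the node over $x_2$ to carry the residue proportional to $b$ (this swap is exactly the content of $\lambda p=bq$ in the paper's proof). Applying $\sigma$ and using $\mathrm{Res}_{N_i}(x\,dx/y)=x_i\,\mathrm{Res}_{N_i}(dx/y)$ gives $x_1\lambda : x_2 b=\sigma(\lambda):b$, i.e.\ $\lambda x_1=\sigma(\lambda)x_2$ --- not $\lambda x_2=\sigma(\lambda)x_1$ as you wrote, which moreover does not follow from the ratio $b:\sigma(\lambda)$ you state immediately before it. Your relation produces a different value of $\mu$ and hence a different triple $(X,\omega_1,\omega_2)$; since by Theorem~\ref{Galoisthm} such twists of the cusp data can move a cusp to the other spin component, this is not a harmless relabeling. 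Part (c) is fine as you argue it. The fix is mechanical once the residue bookkeeping above is sorted out, but as written the proof of (a)--(b) does not yield the stated $\mu$.
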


\begin{proof}
Lemma \ref{singfiberslem}.(b) implies that a singular fiber $X$ of
$\ol{\cXX}$ is a projective line $\PP^1_z$ with two pairs of points
identified.  We normalize the hyperelliptic involution ad hoc by $z
\mapsto -z$. The Weierstra\ss\ points on the generic fiber of $\ol{\cXX}$
specialize to the points $0, \infty$ and the two pairs of points $p,-p$
and $q,-q$ that are glued together.

Let $\omega_1$ be a section of $\cLL_1$ restricted to the singular
fiber $\cXX_c$. We claim that we can read off the residues of
$\omega_1$ from the geometry of Figure~\ref{cap:spl_prototype}, up to
a common scalar multiple coming from replacing $\omega_1$ by a scalar
multiple and up to $\pm 1$ stemming from the choice of a branch at the
singularity. Recall that the singular fiber is obtained topologically
by squeezing the horizontal core curves of the cylinders. Metrically,
${\rm diag}(e^t, e^{-t})$ tends  to $\cXX_c$ for $t \to \infty$
(\cite{Ma75}). Consequently, up to indeterminacies mentioned in the
claim, the residues of $\omega_1$ are the lengths of the core curves
of the horizontal cylinders in Figure~\ref{cap:spl_prototype}.  We
choose one of the two possibilities for the sign, and find that
$$\omega_1 = \left(\frac{\lambda}{z-p}+ \frac{-\lambda}{z+p} +
\frac{-b}{z-q} + \frac{b}{z+q} \right){\rm d}z. $$ 

By assumption, $\omega_1$ has a double zero, which is either at $x=0$
or at $x=\infty$. We may assume that $\omega_1$ has a double zero at
$x=\infty$. This implies that $\lambda p=b q$. After multiplying $z$
by a suitable constant, we may assume that $q=\lambda$ and $p=b$.

Since $b\in \QQ$, it follows that $b$ is fixed by $\sigma$. Therefore
$$\omega_2 = \left(\frac{\sigma(\lambda)}{z-p}+
\frac{\sigma(-\lambda)}{z+p} + \frac{-b}{z-q} +
\frac{b}{z+q}  \right){\rm d}z. $$
Note that $\omega_2$ has  zeros  in  $z=\pm \sqrt{\mu}$, for some $\mu$.

In terms of the coordinates $x'=z^2$ and $y=z(x'-\lambda^2)(x'-b^2)$,
we find that, up to multiplicative constants, $\omega_1={\rm d}x/y$
and $\omega_2 = (x'-\mu)\, {\rm d}x'/y$. Therefore defining $x=x'+\mu$ yields
(b). The coordinates $x$ and $y$ obviously satisfy the relation in (a).
 The uniqueness assertion (c) follows immediately from our normalization.
\end{proof}

\begin{Thm}\label{Galoisthm}
\begin{itemize}
\item[(a)] Suppose that $D=d^2 \equiv 1 \pmod{8}$. Then the components
 $W_D^\ve$ of $W_D$ are fixed by the absolute Galois group
 $G_\QQ$, i.e.\ $G_\QQ$ acts trivially on the map $W_D^\ve \to \cMM_2$.
 \item[(b)] If $D \equiv 1 \mod 8$ is not a square, the action of
 $\Gal(\QQ(\sqrt{D})/\QQ)$ sends $W_D^0\to\cMM_2$ to
 $W_D^1\to\cMM_2$. In particular, the field of moduli of  
$W_D^\ve \to \cMM_2$ is not $\QQ$.
\end{itemize}
\end{Thm}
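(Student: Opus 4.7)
The strategy is to exploit the normal form (\ref{normalformeq}) for the singular fibers of $\ol{\cXX}^\ve(D)\to\ol{C}(D)$ furnished by Proposition \ref{singnormalprop}, together with the spin formula (\ref{spinformulaeq}). Since the universal family of genus-two curves is determined by its stable degenerations, it suffices to understand the Galois action on the cusps of $W_D^\ve$, each of which is indexed by a splitting prototype $(a,b,c,e)$.

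For (a), assume $D=d^2$. Because $D\equiv 1\pmod 8$, both $d$ and $e$ are odd, so $\lambda=(e+d)/2\in\ZZ$ and $\sigma(\lambda)\in\ZZ$. Every coefficient in (\ref{normalformeq}) then lies in $\QQ$, so $G_\QQ$ fixes every singular fiber of $\ol{\cXX}^\ve(D)$. Consequently each cusp of $W_D^\ve$ is Galois-fixed, and since the family is determined by these degenerations, the induced map $W_D^\ve\to\cMM_2$ is defined over $\QQ$.

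For (b), let $\sigma$ denote the nontrivial element of $\Gal(\QQ(\sqrt D)/\QQ)$ and fix a cusp with prototype $(a,b,c,e)$. Applying $\sigma$ to (\ref{normalformeq}) replaces $\mu$ by $\sigma(\mu)$ and $\lambda^2$ by $\sigma(\lambda)^2$. Put $\lambda':=-\sigma(\lambda)=((-e)+\sqrt D)/2$: a direct calculation confirms that $\sigma(\mu)$ and $(\lambda')^2$ satisfy the recipe of Proposition \ref{singnormalprop}(a) with $e$ replaced by $-e$, while the uniqueness of $x$ in (c) together with the rationality of $b$ forces $b'=b$, whence $c'=c$ from $D=(e')^2+4b'c'$. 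Thus the Galois-conjugate cusp is attached to a prototype of the form $(a',b,c,-e)$, and (\ref{spinformulaeq}) gives
$$\ve(a',b,c,-e)-\ve(a,b,c,e)\equiv -e+(a'-a)(c+1)(1+b)\pmod{2}.$$
Since $e$ is odd and (in the generic parity case where exactly one of $b,c$ is even) $(c+1)(1+b)$ is even, this difference is $1\pmod 2$; hence $\sigma$ interchanges $W_D^0$ and $W_D^1$, so the field of moduli is $\QQ(\sqrt D)$.

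The main obstacle is the bookkeeping on prototypes: the raw quadruple $(a',b,c,-e)$ may violate the inequality $c+(-e)<b$ of Definition \ref{protodef}, so one must replace it by its standard representative under McMullen's equivalences (coming from symmetries of the splitting polygon), and track how $a'$ depends on $a$. The only delicate case is when both $b$ and $c$ are even, where $(c+1)(1+b)$ becomes odd; there one must verify independently that these equivalences preserve $a\pmod 2$, which again yields the required spin flip.
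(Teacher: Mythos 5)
Your strategy for part (b) --- conjugate the normal form \eqref{normalformeq} and detect the component via the spin formula \eqref{spinformulaeq} --- is the same as the paper's, but running it on an \emph{arbitrary} prototype does not close, and the obstruction is more serious than bookkeeping. The normal form \eqref{normalformeq} depends only on $b$ and $e$ (through $\lambda$ and $\mu$), not on $a$; yet when $b$ and $c$ are both even the spin genuinely depends on $a$ (for $D=17$ the prototypes $(0,2,2,-1)$ and $(1,2,2,-1)$ have opposite spin but \emph{identical} normal forms --- these are exactly the two fibers in Lemma \ref{necclem}(b)). So no normalization of the conjugated quadruple can recover $a'$, and your displayed spin difference has an undetermined term precisely in the case you flag. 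The paper sidesteps this entirely by working with the single pair $(0,(D-1)/4,1,\pm 1)$: here $\gcd(b,c)=1$ forces $a=0$, both quadruples satisfy Definition \ref{protodef}, and one checks $\sigma(\lambda_1^2)=\lambda_{-1}^2$ and $\sigma(\mu_1)=\mu_{-1}$ directly. You also omit a step the paper needs: verifying that the two conjugate triples $(X,\omega_1,\omega_2)$ are \emph{not} isomorphic (done in the paper via the coordinate change $x\mapsto x\mu_1/\mu_{-1}$). Without this, the two cusps could map to the same boundary point of $\ol{\cMM}_2$, fixed by $\sigma$ and possibly lying on both components, and nothing follows. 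Finally, the passage from ``one cusp moves'' to ``$\sigma$ swaps the components'' should rest on the fact that $G_\QQ$ acts on the set of Teichm\"uller curves preserving real multiplication and zero multiplicities (\cite{Mo06a}, Corollary 5.4), hence permutes $\{W_D^0,W_D^1\}$; appealing to the family being ``determined by its degenerations'' is not available in general (the paper proves such uniqueness only for $D=13,17$).

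For part (a) there is a genuine gap of applicability: Proposition \ref{singnormalprop} rests on Lemma \ref{singfiberslem}(b), which holds only for \emph{primitive} Teichm\"uller curves generated by surfaces in $\Omega\cMM_2(2)$. When $D=d^2$ the curves $W_D^\ve$ are imprimitive, the degenerate fibers need not be irreducible rational nodal curves, and the formula for $\mu$ (with $\sigma(\lambda)-\lambda$ in the denominator, $\sigma$ generating $\Gal(\QQ(\sqrt D)/\QQ)$) does not literally make sense. The paper handles (a) by citing McMullen's treatment of the imprimitive case (\cite{McM05a}, Theorem 6.1) rather than by the normal form.
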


This theorem may be reformulated as follows. The moduli map
$W_D^\varepsilon\to\cMM_2$ may be defined over $\QQ$ if and only if
$W_D^\ve$ is imprimitive or $W_D$ is irreducible. An immediate
consequence is that orbifold the Euler characteristics of
$W_D^0$ and $W_D^1$ coincide. The proof in \cite{Ba06} 
needed a careful analysis of intersection numbers to arrive at
this conclusion.

\begin{proof}
Suppose that $D\equiv 1\pmod{8}$ is a square. Then $W_D^\varepsilon$
is imprimitive, and the theorem follows from (\cite{McM05a}, Theorem
6.1).

Suppose that $D\equiv 1\pmod{8}$ is not a square.  For the second
statement, recall from \cite{Mo06a}, Corollary~5.4, that $G_\QQ$ acts on
the set of Teichm\"uller curves. Moreover, given a family of curves
over a Teichm\"uller curve, $G_\QQ$ acts on the family of Jacobians, and
maps eigenforms for real multiplication to eigenforms. Obviously,
$G_\QQ$ also preserves the multiplicity of the zeros of both eigenforms.

By the classification of Teichm\"uller curves in $\Omega \cMM_2(2)$, the
absolute Galois group $G_\QQ$ acts either trivially or via a group of
order $2$. Let $c\in \ol{C}$ be a cusp. Then to $c$ we associate a
triple $(X, \omega_1, \omega_2)$, where $X$ is the stable curve
$\ol{\cXX}_c$ and $\omega_i$ are holomorphic sections of $\cEE_i|_{c}$
which are eigenforms for real multiplication.

 The boundary divisors of $\ol{\cMM}_2$ are defined over $\QQ$. To
show that the $G_\QQ$-action is non-trivial, it suffices therefore to
find a cusp $c_0$ of $W_D^0$ whose Galois conjugate is a cusp $c_1$ of
$W_D^1$ and such that the triples $(X_j, \omega_{1, j}, \omega_{2,
j})$ are not isomorphic.

We consider the splitting prototypes $(0,(D-1)/4,1,1)$ and
$(0,(D-1)/4,1,-1)$ which have different spin invariant by
(\ref{spinformulaeq}).  
We write $\lambda_e=(e+\sqrt{D})/2$ for $e=\pm 1$. 
Let $\mu_e$ be given as in equation \eqref{normalformeq}.
One easily calculates that
$$ \sigma(\lambda_1^2) = \lambda_{-1}^2 \quad \text{and} \quad
\sigma(\mu_1) = \mu_{-1}.$$
Consequently, $\sigma$ applied to the equation \eqref{normalformeq}
for the prototype $(0,(D-1)/4,1,1)$ yields the corresponding
equation attached to the prototype $(0,(D-1)/4,1,-1)$. We thus
have found a cusp that is moved by $G_\QQ$ from $W_D^0$ 
to $W_D^1$.

It remains to check that the triples attached to the prototypes
are not isomorphic. Equivalently, we have to check that the curves 
$X^{\varepsilon(e)}$ given by the normal form \eqref{normalformeq}
are not isomorphic through an isomorphism fixing $0$ and $\infty$.
Suppose not. Then the
 isomorphism $X^{\varepsilon(-1)}\to X^{\varepsilon(1)}$
 corresponds to the change of coordinates $x\mapsto
 x\mu_1/\mu_{-1}$. Since $\mu_1=\sigma(\mu_{-1})\neq \mu_{-1}$, we
 find that
\[
 \frac{\mu_1}{\mu_{-1}}(\mu_{-1}+b^2) = (\mu_{1}+\lambda_1^2).
\]
This is equivalent to
\[
\frac{\mu_1}{\mu_{-1}}=\frac{\lambda_1^2}{b^2}.
\]
One computes that
\[
\frac{\mu_1}{\mu_{-1}}=-\frac{\lambda_1}{\lambda_{-1}}=
\frac{(1+\sqrt{D})^2}{D-1},
\quad\text{ and }\quad \frac{\lambda_1^2}{b^2}=\frac{4(1+\sqrt{D})^2}{D-1}.
\]
We obtain a contradiction.
\end{proof}

\section{Triangle groups}\label{trianglesec} 
In this section we recall a result of McMullen which says that there
are infinitely many Teichm\"uller curves in genus $2$ whose affine
group is not commensurable to a triangle group. This is a key
ingredient in showing that infinitely many Teichm\"uller curves in
genus $2$ are counterexamples to the conjectures of Dwork and
Chudnovsky--Chudnovsky  (\S~\ref{counterexasec}).

Let $f:\cXX\to C$ be the universal family over (a finite cover of) a
primitive Teichm\"uller curve in genus $g=2$, as in \S
\ref{Teichsec}. The {\em elliptic
points} are the images in $C$ of the points in the complex upper half
plane $\HH$ with nontrivial stabilizer. If $z\in \HH$ has nontrivial
stabilizer $\Gamma_z$, then $\Gamma_z$ is finite.  After replacing $C$
by a finite cover, one may suppose that $\Gamma$ is torsion free. If
$\Gamma$ is torsion free there are no elliptic points. This will
sometimes be convenient. Recall that $K=\QQ(\sqrt{D})$ is the
trace field of $\Gamma(W_D^\ve)$.

A Fuchsian group $\Gamma$ is a {\em triangle group} if $g(\ol{C})=0$ and the
set $\{\tau_i\}$ of cusps and elliptic points has cardinality $3$. A
triangle group is denoted by $\Gamma=\Delta(n_1,n_2,n_3)$, where
$n_i=\infty$ if $\tau_i$ is a cusp and $n_i$ is the order of the
stabilizer if $\tau_i$ is an elliptic point.  Teichm\"uller curves
always have cusps, therefore we may assume that $n_3=\infty$
(\cite{Veech}). The trace field of the triangle group
$\Delta(n_1,n_2,\infty)$ is $\QQ(\zeta_{n_1}+\zeta_{n_1}^{-1},
\zeta_{n_2}+\zeta_{n_2}^{-1})$, where $\zeta_{n_j}$ is a primitive
$n_j$th root if unity of $n_j$ is finite and is  $1$ otherwise.

The following lemma is proved by McMullen (\cite{McM03}, \S~9). We
recall the proof for the convenience of the reader.

\begin{Lemma}\label{trianglelem}
Let $D$ be the discriminant of a real quadratic field of the form
$D=4E$ or $D=E$ with $E$ square free.  Let $C$ be a primitive
Teichm\"uller curve in genus $g=2$ of discriminant $D$. Suppose that
the affine group, $\Gamma$, of $C$ is commensurable to a triangle
group. Then $C=W_D^\varepsilon$ with $D\leq 12$.
\end{Lemma}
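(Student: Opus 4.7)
The plan is to exploit the fact that commensurability of Fuchsian groups preserves the invariant trace field; combined with the explicit description of the trace field of $\Delta(n_1,n_2,\infty)$ recalled just above the lemma, this will sharply restrict $D$.

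First I would note that, by the discussion in \S~\ref{Teichsec}, $\Gamma=\Gamma(W_D^\ve)$ has trace field $K=\QQ(\sqrt{D})$, consistent with the real multiplication by $\cOO_D$. After replacing $C$ by a finite unramified cover I may assume $\Gamma$ is torsion-free, so that its trace field agrees with its invariant trace field. Teichm\"uller curves always have cusps, so $\Gamma$ contains a parabolic element; any triangle group in its commensurability class must then also contain a parabolic, and after reordering I may take $n_3=\infty$. The commensurability-invariance of the invariant trace field then yields
\[
\QQ(\sqrt{D})\;=\;\QQ(\zeta_{n_1}+\zeta_{n_1}^{-1},\,\zeta_{n_2}+\zeta_{n_2}^{-1}).
\]

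Second, I would enumerate the pairs $(n_1,n_2)$ compatible with this equality. Since $\QQ(\zeta_n+\zeta_n^{-1})$ has degree $\varphi(n)/2$ over $\QQ$ for $n\ge 3$, each factor on the right has degree at most $2$, hence $\varphi(n_i)\le 4$ and $n_i\in\{2,3,4,5,6,8,10,12,\infty\}$. A direct calculation gives
\[
\QQ(\zeta_n+\zeta_n^{-1})=\begin{cases}\QQ&\text{if }n\in\{\infty,2,3,4,6\},\\ \QQ(\sqrt{5})&\text{if }n\in\{5,10\},\\ \QQ(\sqrt{2})&\text{if }n=8,\\ \QQ(\sqrt{3})&\text{if }n=12.\end{cases}
\]
For the compositum of two such subfields to remain quadratic, the two factors cannot contribute distinct nontrivial quadratic subfields; consequently $\QQ(\sqrt{D})\in\{\QQ(\sqrt{2}),\QQ(\sqrt{3}),\QQ(\sqrt{5})\}$. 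Under the standing hypothesis that $D=E$ or $D=4E$ with $E$ square-free, this forces $D\in\{5,8,12\}$, so in particular $D\le 12$.

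The main obstacle is the first step: strictly it is the \emph{invariant} trace field (and not the trace field) that is preserved under commensurability, and one must verify that after replacing $\Gamma$ by a torsion-free finite-index subgroup the two notions coincide. This is standard given the real-multiplication structure on $\Gamma$, which pins down the trace field as $K$, but it requires some care to ensure no spurious degree-$2$ extension creeps in when passing to the common finite-index subgroup of $\Gamma$ and the triangle group.
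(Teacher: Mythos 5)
Your proof is correct and follows essentially the same route as the paper's (which is itself a recollection of McMullen's argument): commensurability invariance of the trace field, the explicit trace field $\QQ(\zeta_{n_1}+\zeta_{n_1}^{-1},\zeta_{n_2}+\zeta_{n_2}^{-1})$ of $\Delta(n_1,n_2,\infty)$, and the enumeration forcing $D\in\{5,8,12\}$. Your extra care about the distinction between the trace field and the invariant trace field is a worthwhile refinement that the paper's one-line justification glosses over, but it does not change the substance of the argument.
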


\begin{proof}
Let $D$ be as in the statement of the lemma.  Suppose that the affine
group of $W_D^\varepsilon$ is commensurable to a triangle group
$\Delta(n,m,\infty)$. Commensurable Fuchsian groups have the same
trace field, therefore $\QQ(\sqrt{D})=\QQ(\zeta_n+\zeta_n^{-1},
\zeta_m+\zeta_m^{-1})$.  This is only possible for $D\in \{5, 8,
12\}$.
\end{proof}

\begin{Rem} It is an open question whether there are infinitely many primitive 
Teichm\"uller curves in genus $2$ whose affine group is commensurable
 to a triangle group.  If $D$ is the discriminant of such a
 Teichm\"uller curve, then Lemma \ref{trianglelem} implies that
 $D=4EF^2$ or $EF^2$ with $E$ square free and $D/F^2\leq 12$.
\end{Rem}

\section{Generalities on differential equations}\label{pcurvsec}
In this section we recall some generalities on flat vector
bundles. Among other things we recall the connection between flat
vector bundles and differential equations, and define global
nilpotence.

Let $(\cEE, \nabla)$ be a flat vector bundle of rank $2$ on a smooth
projective curve $\ol{C}$ with regular singularities in $\tau_1,
\ldots, \tau_r\in \ol{C}$. In the case that $\ol{C}$ is a
Teichm\"uller curve, the set of singularities is the union of the
cusps with the elliptic points. Let $t$ be a local parameter of
$\ol{C}$ at $\tau_i$, and let ${\mathfrak m}_{\tau_i}$ the maximal
ideal of the local ring ${\mathcal O}_{\ol{C}, \tau_i}$. The {\em
monodromy operator} $\mu_i$ is the endomorphism of the fiber
$\cEE|_{\tau_i}=\cEE_{\tau_i}/{\mathfrak m}_{\tau_i}\cdot
\cEE_{\tau_i}$ defined by $\nabla(t\partial/\partial t)$. One checks
that $\mu_i$ does not depend on the choice of $t$.

\begin{Defi}
The {\em local exponents} $\gamma_{1,\tau_i}, \gamma_{2, \tau_i}$ of
$\cEE$ at $\tau_i$ are the eigenvalues of $\mu_i$.
\end{Defi}

 If $\tau_i$ is a cusp then $\mu_i$ is quasi-unipotent. This
implies that $\gamma_{1, \tau_i}\equiv \gamma_{2, \tau_i} \pmod{\ZZ}$.

In the case that $g(\ol{C})=0$, a flat vector bundle $\cEE$ of rank
$2$ corresponds to a differential equation, as follows. Let $t$ be a
parameter of $\ol{C}$.  Choose a (rational) section $s$ of $\cEE$ such
that $s$ and $s':=\partial s/\partial t$ are generically
independent. Such a section is called a {\em cyclic vector}. Since the
rank of $\cEE$ is $2$, there exist rational functions $p_1, p_2\in
k(x)$ such that
\[
s''+p_1 s'+p_2s=0.
\]
The differential operator $L=(\partial/\partial
t)^2+p_1(\partial/\partial t)+p_2$ is Fuchsian. Its  singularities are the
singularities of $\nabla$, and possibly $t=\infty$. Recall that this
means that $\ord_{x_j} p_i\geq -i$. The differential operator $L$ is
called the {\em differential operator} associated with $(\cEE, s)$. This
defines an equivalence between differential equations and vector
bundles with a section.

If $\cEE$ possesses a nontrivial Hodge filtration $\cLL\subset \cEE$,
for example if $\cEE$ is indigenous, we always choose $s$ to be a
rational section of $\cLL$. Replacing $s$ by a multiple changes $L$ to
an equivalent differential operator. If $\cEE$ is indigenous the
differential operator is just the uniformizing differential equation
corresponding to $\HH\to \HH/\Gamma=C$, as defined in \cite{Yoshida},
\S~5.2. This may be seen by remarking that the affine group $\Gamma$
is the monodromy group of $L$.

The notion of local exponents we defined above agrees with the
classical notion of local exponents of a differential operator. Write
$p_j=\sum_{n\geq -j} c_{j, n}t^n$. Then the local exponents are the
roots of the indicial equation $t(t-1)+tp_{1, -1}+p_{2, -2}=0$.

If $L$ is hypergeometric, i.e.\ if $L$ has exactly three singularities
which we may suppose to be $\{0,1,\infty\}$, $L$ is determined by its
local exponents. This is no longer the case if $L$ has more than three
singularities. Namely, apart from the position of the singularities
and the local exponents, $L$ also depends on the so called {\em
accessory parameters}. The main problem in determining the
differential equation corresponding to $W_D^\varepsilon$ explicitly in
\S \ref{desec} is to determine the accessory parameters.

\bigskip\noindent We now recall from \cite{Katz} the notion of a {
globally nilpotent differential operator}. Let $R$ be an integral domain 
which is finitely
generated (as ring) over $\ZZ$, and whose fraction field $\Frac(R)$
has characteristic zero. Let $\pi:\ol{\mathcal C}\to \Spec(R)$ be a
smooth morphism of relative dimension $1$. (In our case, $R$ is
an order in an number field with finitely many primes inverted, and
$\ol{\mathcal C}$ will be a model of the Teichm\"uller curve $\ol{C}$
over $R$.)  Let ${\mathcal D}$ be an \'etale divisor on $\ol{\mathcal
C}$.  We let $(\cEE, \nabla)$ be a flat vector bundle of rank $2$ on
$\ol{C}=\ol{\mathcal C}\otimes_R \Frac(R)$ with regular singularities
in $D={\mathcal D}\otimes_R \Frac(R)$. Let $p$ be a prime number which
is not invertible on $\ol{\mathcal C}$, and let $\wp$ be a prime ideal
of $R$ above $p$. Write $\FF_q=R/\wp$. Reduction modulo $p$ defines a
flat vector bundle $\cEE_\wp:=\cEE\otimes \FF_q$ on
$\ol{C}_\wp:=\ol{\mathcal C}\otimes \FF_q$.

Define ${\mathcal T}:=(\Omega_{\ol{C}_\wp/k}(\log{\mathcal D}\otimes
k))^{\otimes -p}$. The $p$-{\em curvature} of $\cEE_\wp$ is an ${\mathcal
O}_{\ol{C}_\wp}$-linear morphism
$\Psi_{\cEE}:{\mathcal T}\to \End_{{\mathcal
O}_{\ol{C}_\wp}} (\cEE)$ defined by
\begin{equation}\label{pcurveq}
\Psi_{\cEE}(D^{\otimes p}):=\nabla(D)^{\otimes p}-\nabla(D^{\otimes p}).
\end{equation}
For details, we refer to \cite{Katz}, \S~5.

\begin{Defi}\label{pcurvdef}
\begin{itemize}
\item[(a)] The $p$-curvature of $\cEE_\wp$ is {\em nilpotent} if
$\Psi_{\cEE_\wp}$ consists of nilpotent endomorphisms.
\item[(b)] The flat vector bundle $\cEE$ is {\em globally nilpotent}
if the $p$-curvature $\cEE_\wp$ is nilpotent for all but finitely many $\wp$.
\item[(c)] If $L$ is a differential operator of order $2$, we say that
$L$ has {\em nilpotent $p$-curvature} if the $p$-curvature of the
corresponding flat vector bundle is nilpotent.
\end{itemize}
\end{Defi}

Katz' Theorem (\cite{Katz}, Theorem 10.0) states that a flat vector
bundle $(\cEE, \nabla)$ which is the direct factor of the relative de
Rham cohomology of a family of curves defined over a number field is
globally nilpotent.  The local exponents of globally nilpotent
flat vector bundles are rational numbers.

If $g(\ol{C})=0$, one may rephrase the
notion of nilpotent $p$-curvature, as follows (\cite{Honda}).  We let
$\wp|p$ be invertible on $\ol{\mathcal C}$, and write $L_\wp$ for the
differential operator corresponding to $\cEE_\wp$. Since the local
exponents of $L$ are rational numbers in our situation, the
$p$-curvature of $\cEE_\wp$ is nilpotent if and only if $L_\wp$ has a
polynomial solution.

The $p$-curvature of $\cEE_\wp$ is identically zero if $\Psi_{\cEE_\wp}$
is zero. If $g(C)=0$, this is equivalent to saying that the
corresponding differential operator $L_\wp$ has a basis of polynomial
solutions. Since Teichm\"uller curves always have a cusp, it follows
from \cite{Honda}, Proposition 5.1, that the $p$-curvature of
the differential operators we consider do not have zero $p$-curvature. 

The analog of this notion in characteristic zero is that $L$ has a
basis of algebraic solutions. Here $u\in \CC(\!(t)\!)$ is called {\em
algebraic} is if it algebraic over $\CC(t)$. The differential operator
$L$ has a basis of algebraic solutions if and only if its monodromy
group is finite. Since the monodromy group of the uniformizing
differential equation $L$ of a Teichm\"uller curves is never finite,
$L$ does not have a basis of algebraic solutions.

We state a well-known result on solutions of differential
operators in characteristic zero.  It follows
immediately from Fuchs' Theorem (\cite{Beukers}, Theorem 2.9). Note
that if $\tau$ is a cusp of $L$ then after replacing $\ol{C}$ by a
finite cover and $L$ by an equivalent differential operator, we may
assume that the local exponents of $L$ at $\tau$ are
$(0,0)$. Therefore the condition in Lemma \ref{Fuchslem} is no serious
restriction.

\begin{Lemma}\label{Fuchslem}
Let $L$ be a Fuchsian differential operator of order $2$. Suppose that
$\tau$ is a cusp, and let $t$ be a local parameter in $\tau$. Assume
that the local exponents of $L$ at $\tau$ are $(0,0)$.  Then there  exists a 
unique solution $u\in \CC[[t]]$ of $L$ around $t=0$ with the
property that $u(x=0)=1$.
\end{Lemma}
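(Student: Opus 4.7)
The plan is to execute the Frobenius method at a regular singular point, specialized to the case in which the indicial equation has a double root at $0$.

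First, I would translate the hypothesis into concrete information on the Laurent expansions of the coefficients of $L$. Writing $p_1 = \sum_{n\geq -1} c_{1,n} t^n$ and $p_2 = \sum_{n\geq -2} c_{2,n} t^n$, the indicial polynomial displayed earlier in the text is $\phi(r) = r(r-1) + c_{1,-1}\,r + c_{2,-2}$. The hypothesis that both exponents equal $0$ forces $c_{1,-1} = 1$ and $c_{2,-2} = 0$, so $\phi(r) = r^2$. Rewriting $t^2 L$ in terms of the Euler operator $\theta = t\,\partial/\partial t$ then puts it in the form
\[
t^2 L = \theta^2 + t\,A(t)\,\theta + t\,B(t)
\]
for certain $A, B \in \CC[[t]]$ determined by $p_1$ and $p_2$.

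Next, I would look for a formal solution $u = \sum_{n\geq 0} a_n t^n$. Substituting into $Lu = 0$ and comparing coefficients of $t^n$ yields a recursion of the shape
\[
n^2\, a_n \;=\; -\sum_{k=0}^{n-1}\bigl(k\,A_{n-k-1} + B_{n-k-1}\bigr)\,a_k \qquad (n \geq 1),
\]
where $A = \sum A_j t^j$, $B = \sum B_j t^j$. Since $n^2 \neq 0$ for every $n \geq 1$, given the normalization $a_0 = u(0) = 1$ the coefficients $a_n$ are uniquely and explicitly determined. This settles both existence and uniqueness at the level of formal power series.

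The remaining step is convergence. Because $t = 0$ is a regular singular point of the Fuchsian operator $L$, the Laurent coefficients $c_{i,n}$ satisfy geometric bounds $|c_{i,n}| \leq C R^n$ on some disc. A routine majorant argument applied to the recursion above then gives $|a_n| \leq C' R'^{\,n}$, so that $u$ converges in a neighborhood of $t = 0$ and extends holomorphically across it; alternatively one may simply cite Fuchs' theorem (\cite{Beukers}, Theorem~2.9) as indicated in the statement. The only point that is not entirely mechanical is the observation that exploits the double root hypothesis to reduce the leading indicial factor to $n^2$; everything else is classical bookkeeping, and there is no serious obstacle.
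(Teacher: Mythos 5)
Your proposal is correct and is essentially the standard Frobenius-method argument underlying the result; the paper itself gives no independent proof but simply cites Fuchs' Theorem (\cite{Beukers}, Theorem~2.9), whose proof is exactly the computation you carry out. Your reduction of the indicial data to $c_{1,-1}=1$, $c_{2,-2}=0$, the recursion with leading coefficient $n^2\neq 0$ for $n\geq 1$, and the majorant argument for convergence are all accurate, so nothing is missing.
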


\section{Counterexamples to the conjectures of Dwork and 
Chudnovsky--Chudnovsky}
\label{counterexasec}
In this section we use the results from the previous sections to show
that the $W_D^\varepsilon$'s provide counterexamples to conjectures of
Chudnovsky--Chudnovsky and Dwork. The main ingredients of the proofs
can already be found in \cite{McM03}. Our contribution here is
linking the theory of Teichm\"uller curves with that of differential
equations. In \S~\ref{desec} we then find an explicit formula for some
of the differential equations. This gives then also explicit
counterexamples to these conjectures.

The following conjecture is stated by Chudnovsky and Chudnovsky in
\cite{Chud}. The original conjecture is stated in the language of
differential operators, i.e.\ Chudnovsky--Chudnovsky assume that the
genus of $\ol{C}$ is $0$. But there is no need for this restriction
(as both variants of the conjecture are wrong).

\begin{Conj}[Chudnovsky--Chudnovsky]\label{Chudconj}
Let $\ol{C}$ be a smooth projective curve defined over a number field.
Let $(\cEE, \nabla)$ be an indigenous bundle on $\ol{C}$, and let
$\Gamma\subset \PSL_2(\RR)$ be the monodromy group of $\cEE$.  Suppose
that $\cEE$ is globally nilpotent. Then $\Gamma$ is either arithmetic
or commensurable to a triangle group.
\end{Conj}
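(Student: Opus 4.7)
The plan is to attack Conjecture \ref{Chudconj} by reducing it, in two steps, to a classification of rank-$2$ motivic local systems on curves. First, I would try to upgrade the hypothesis of global nilpotence to a geometric origin for the flat bundle $(\cEE,\nabla)$; second, I would try to classify all indigenous, rank-$2$, geometrically realized $(\cEE,\nabla)$ and check the dichotomy directly.

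For the first step, the natural target is Grothendieck's $p$-curvature conjecture itself, which would assert that $(\cEE,\nabla)$ is a direct summand of the relative de Rham cohomology of some smooth family $\pi:\cYY\to \ol{C}$ over a number field. Since Grothendieck's conjecture is open, I would aim instead for a weaker but conditionally available substitute: combine Katz's rationality of local exponents for globally nilpotent connections with the André--Bombieri theory of $G$-functions, using the nilpotence of $\Psi_{\cEE_\wp}$ to bound $p$-adic denominators of local solutions at a cusp via Lemma \ref{Fuchslem}. This should realize every local solution as a $G$-function and thus, by André's theorem on $G$-operators, exhibit $(\cEE,\nabla)$ as a subquotient of a Gauss--Manin connection associated with an algebraic family over $\ol{C}$.

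For the second step, I would use that $(\cEE,\nabla)$ is indigenous of rank $2$, so the Kodaira--Spencer map $\Theta$ of (\ref{KSeq}) is an isomorphism and $\HH/\Gamma \to \ol{C}$ uniformizes $\ol{C}$. Once $(\cEE,\nabla)$ is motivic, the filtration $\cLL\subset \cEE$ underlies a polarized $\QQ$-VHS of weight $1$ with maximal Higgs field, and I would invoke the Viehweg--Zuo/Simpson-type rigidity results to identify such VHSs: they must come either from a Shimura datum attached to an order in a quaternion algebra over a totally real field (in which case Margulis arithmeticity forces $\Gamma$ to be arithmetic), or from a pullback of a hypergeometric system on $\pni$ (in which case Deligne--Mostow theory identifies $\Gamma$ up to commensurability with a triangle group $\Delta(n_1,n_2,n_3)$, cf.\ \S\ref{trianglesec}). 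This is intended to cover the two clauses of the conjecture.

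The hard part, and the step I expect to be the main obstacle, is the first one: passing from the purely characteristic-$p$ hypothesis of nilpotent $p$-curvature to an honest geometric source for $(\cEE,\nabla)$. $G$-function arguments alone yield $\cSS$-integral solutions (in the spirit of Theorem \ref{integralthm}) but not a family of varieties, and outside of special cases such as Krammer's Shimura-curve argument there is no known mechanism to manufacture $\pi:\cYY\to \ol{C}$ from nilpotence data. A secondary obstacle is completeness of the VHS classification in the second step when the trace field has degree larger than $\deg(\cLL_1)$, where non-Shimura motivic examples are not ruled out by any known rigidity theorem; this is precisely where the argument would have to be examined most carefully before one could claim a proof.
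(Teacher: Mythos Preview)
The statement you are trying to prove is a \emph{conjecture}, and the paper does not prove it; on the contrary, the whole point of \S\ref{counterexasec} is that Conjecture~\ref{Chudconj} is \emph{false}. Proposition~\ref{Chudprop} together with Lemma~\ref{trianglelem} exhibits, for infinitely many $D$, an indigenous bundle $\cEE_1$ on (a cover of) $W_D^\varepsilon$ which is globally nilpotent by Katz's theorem, yet whose monodromy group $\Gamma$ is nonarithmetic and not commensurable to a triangle group. So any purported proof of the conjecture must contain an error.

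In your outline the error is in Step~2. You claim that a rank-$2$ polarized $\QQ$-VHS with maximal Higgs field must arise either from a quaternionic Shimura datum or from a hypergeometric pullback. This dichotomy is simply not true: the indigenous summand $\cEE_1$ of $\cHH^1_\dR(\ol{\cXX})$ over a primitive Teichm\"uller curve in genus~$2$ is exactly such a VHS (it is a direct factor of a Gauss--Manin system, has maximal Higgs field by \cite{Mo06a}, Theorem~2.6, and underlies a polarized $\QQ$-VHS), but for $D\geq 13$ its monodromy is neither arithmetic nor commensurable with any $\Delta(n_1,n_2,n_3)$. The Viehweg--Zuo/Simpson rigidity results you invoke do not yield the classification you assert; they constrain the Higgs bundle, but do not force $\Gamma$ into the arithmetic/triangle dichotomy. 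You in fact flag this gap yourself as a ``secondary obstacle'', but it is not secondary: it is precisely where the counterexamples of this paper live, and it cannot be closed because the conjecture is false.
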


We now return to the situation that $C=\HH/\Gamma$ is a primitive
Teichm\"uller curve in genus $2$, of discriminant $D$. We suppose that
the affine group $\Gamma$ of $C$ is not commensurable to a triangle
group (Lemma \ref{trianglelem}). Recall that
$H^1_\dR(\cXX)=\cEE_1\oplus \cEE_2$ is a decomposition of flat vector
bundles of rank $2$ (\S~\ref{Teichsec}). We remark that
Proposition~\ref{Chudprop} holds for all Teichm\"uller curves whose
affine group is not commensurable to a triangle group. One could
extend the result to the Teichm\"uller curves in genus $g=3,4$ which
where found by McMullen. Proposition \ref{Chudprop}, together with
Lemma \ref{trianglelem}, produces counterexamples to Conjecture
\ref{Chudconj}.

\begin{Prop}\label{Chudprop}
\begin{itemize}
\item[(a)] The flat vector bundles $\cEE_i$ are globally nilpotent.
\item[(b)] Suppose that $D\geq 13$. Then $\Gamma$ is nonarithmetic.
\end{itemize}
\end{Prop}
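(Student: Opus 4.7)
The plan is to handle the two parts separately. For part (a), my approach is to apply Katz's theorem on global nilpotence of direct factors of the relative de Rham cohomology of families of curves defined over number fields (\cite{Katz}, Theorem 10.0, invoked just after Definition \ref{pcurvdef}). First I would use Theorem \ref{Galoisthm} to conclude that the Teichm\"uller curve $W_D^\varepsilon$, and hence the universal family $\ol f : \ol{\cXX}\to \ol C$, admits a model over a number field (either $\QQ$ or $\QQ(\sqrt{D})$). Next I would observe that the decomposition $\cHH^1_{\dR}(\ol{\cXX}) = \cEE_1 \oplus \cEE_2$ of \eqref{decompeq} is the eigenspace decomposition for the real multiplication by $\cOO_D$, which is defined over $\QQ(\sqrt{D})$. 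Each $\cEE_i$ is therefore a direct factor of the relative de Rham cohomology defined over a number field, so Katz's theorem applies and delivers global nilpotence directly, with no further computation needed.

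For part (b), my plan is to follow McMullen (\cite{McM03}) and argue by contradiction from the structure theory of arithmetic Fuchsian groups. By \S\ref{trianglesec} the trace field of $\Gamma = \Gamma_D^\varepsilon$ equals $K = \QQ(\sqrt{D})$, and for $D\geq 13$ a fundamental real quadratic discriminant this is a genuine degree-$2$ extension of $\QQ$. By \S\ref{protosec}, $W_D^\varepsilon$ has cusps (one for each splitting prototype), so $\Gamma$ is non-cocompact. If $\Gamma$ were arithmetic, it would be commensurable to the norm-one units of an order in a quaternion algebra $B$ over the invariant trace field $K'$, with $B$ split at exactly one real place of $K'$. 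Non-cocompactness forces $B\simeq M_2(K')$, so $B$ splits at every real place of $K'$; this is only consistent with $K'$ having a single infinite place, i.e.\ $K'=\QQ$. Combined with the fact that the real multiplication structure forces the invariant trace field to be $\QQ(\sqrt{D})$, this gives the desired contradiction.

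The main obstacle I anticipate lies in part (b): one must verify that the \emph{invariant} trace field, and not merely the trace field, of $\Gamma_D^\varepsilon$ equals $\QQ(\sqrt{D})$, which requires the real multiplication to ensure that the monodromy representation is genuinely two-dimensional over $\QQ(\sqrt{D})$. A conceptually different route, which avoids this subtlety, is to argue via Lyapunov exponents: if $\Gamma$ were arithmetic with quadratic trace field, the Galois-conjugate representation would be essentially unitary, forcing $\deg \cLL_2 = 0$ and hence $\lambda_2 = 0$; but Corollary \ref{lyacor} gives $\lambda_2 = 1/3>0$, a contradiction. Either route completes the proof, and I would likely present both observations for transparency.
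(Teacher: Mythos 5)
Your proposal is correct, and part (a) is exactly the paper's argument: the authors simply invoke Katz (\cite{Katz}, Theorem 10.0) applied to the direct factors $\cEE_i$ of $\cHH^1_\dR(\ol{\cXX})$. One small nitpick there: the fact that the family is defined over a number field should be sourced from \cite{Mo06a}, Corollary~5.4 (as recalled at the end of \S\ref{Teichsec}), rather than from Theorem~\ref{Galoisthm}, which only determines the \emph{field of moduli} and only treats $D\equiv 1\pmod 8$ in detail. For part (b) you diverge from the paper: the authors dispose of it in one line by citing \cite{Mo06a}, Corollary~2.10 (nonarithmeticity of affine groups of primitive Teichm\"uller curves in genus $2$), together with Lemma~\ref{trianglelem} for non-commensurability with triangle groups, whereas you reconstruct the underlying argument. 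Your first route (cusps force a non-cocompact group, so an arithmetic $\Gamma$ would have to come from $M_2(\QQ)$ and hence have invariant trace field $\QQ$, contradicting trace field $\QQ(\sqrt D)$) is sound, and the subtlety you flag about invariant trace field versus trace field is genuine but resolvable: for Veech groups the trace field is generated by the trace of a single hyperbolic element (McMullen), and applying this to a square of a hyperbolic element shows the two fields coincide. Your second route via $\lambda_2=1/3>0$ from Corollary~\ref{lyacor} is also valid and is closer in spirit to how such nonarithmeticity statements are actually proved in \cite{Mo06a}; it has the advantage of using only material developed in this paper (\S\ref{lyasec}) plus the identification of $\cEE_2$ with the Galois-conjugate local system. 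What the paper's citation buys is brevity; what your argument buys is a self-contained proof and an explanation of \emph{why} nonarithmeticity holds, which the paper deliberately outsources (the introduction explicitly credits (a) and (b) to McMullen and M\"oller and presents the link to differential equations as the new contribution).
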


\begin{proof}
Part (a) follows from the theorem of Katz (\cite{Katz}, Theorem
10.0). Lemma \ref{trianglelem} implies that $\Gamma$ is not
commensurable to a triangle group. Since $C$ is a primitive
Teichm\"uller curve in genus $g=2$, the affine group $\Gamma$ is not arithmetic
(\cite{Mo06a}, Corollary~2.10.)
\end{proof}

Note that we do not know a priori which are the finitely many values
$p$ for which the $p$-curvature of $\cEE_i$ is not nilpotent. For the 
families of
curves corresponding to $W_{13}$ and $W_{17}^\varepsilon$ we answer
this question in Proposition~\ref{reductionprop}.

In the literature one also finds variants of Conjecture
\ref{Chudconj}, omitting either the condition that $\ol{C}$ is defined
over a number field or that $\cEE$ is globally nilpotent. Proposition
\ref{Chudprop} shows that these variants also do not hold.

We now turn to Dwork's Conjecture (\cite{Dwork}, Conjecture
7.4).

\begin{Conj}[Dwork] \label{Dworkconj}
Let $\ol{C}$ be a smooth projective curve of genus $0$ defined over a
number field, and let $(\cEE, \nabla)$ be a flat vector bundle of rank
$2$ which is globally nilpotent. Then either the monodromy group of
$\cEE$ is commensurable to a triangle group or $\cEE$ has an algebraic
solution.
\end{Conj}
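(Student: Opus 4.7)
The plan is to organize the argument around a trichotomy based on the Zariski closure $G$ of the monodromy representation $\rho:\pi_1(\ol{C}\setminus S)\to\SL_2(\CC)$ associated with $(\cEE,\nabla)$. In the easy case where $G$ is finite, Schwarz's theorem gives that every solution is algebraic, so the conclusion holds. If $G$ is infinite but not Zariski-dense in $\SL_2$, then $(\cEE,\nabla)$ is either virtually reducible or has virtually abelian monodromy; in that situation a rank-one subquotient is globally nilpotent, and since globally nilpotent rank-one connections on $\PP^1\setminus S$ are, by standard analysis of the residues and Katz's rationality of local exponents, essentially of Kummer type, I would pass to a cyclic \'etale cover over which the rank-one pieces trivialize and argue that $\cEE$ becomes trivial or is a successive extension of trivial bundles, which again yields algebraic (in fact rational) solutions.

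So the substantive case is $G=\SL_2$ (Zariski dense). Here I would invoke a suitable geometric realization theorem: since $\cEE$ is globally nilpotent on $\ol{C}$ with regular singularities and rational exponents, a theorem of Katz allows us, after an \'etale base change, to realize $\cEE$ as a direct factor of the Gauss--Manin connection $R^1\pi_\ast\cOO_{\cYY/\ol{C}}$ for some smooth family $\pi:\cYY\to\ol{C}\setminus S$ of abelian varieties. The associated local system then supports a polarized $\ZZ$-variation of Hodge structure of weight one, and the image $\Gamma$ of $\rho$ is conjugate to a Fuchsian subgroup of $\SL_2(\RR)$ acting on $\HH$ through the period map.

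At this point I would combine two kinds of constraints on $\Gamma$. First, the Chudnovsky--Andr\'e theory of $G$-operators: global nilpotence forces the basis of power series solutions at every cusp to be a vector of $G$-functions, so the denominator growth of their coefficients is polynomially bounded. Second, the Fuchsian structure gives uniformizing coordinates whose Fourier expansions at the cusps are controlled by $\Gamma$. I would then try to combine the Bombieri--Dwork $p$-adic estimates with the archimedean size of the modular forms associated with $\Gamma$ to argue that, unless the period map factors through a curve of genus zero with three cusps/elliptic points (i.e.\ unless $\Gamma$ is commensurable with a triangle group), the arithmetic constraints force the solutions to be algebraic after all.

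The main obstacle, and the one on which the whole strategy hinges, is the final step: ruling out Fuchsian $\Gamma$ that are neither commensurable to triangle groups nor produce algebraic solutions. There is no known rigidity theorem that accomplishes this directly; moreover, in the arithmetic Fuchsian case one has access to Shimura-theoretic methods but not in the nonarithmetic case, where the group is not even commensurable to any naturally constructed lattice. I would expect that any honest attempt has to combine Hodge-theoretic rigidity of VHS on $\PP^1\setminus S$ with a nonabelian transcendence input stronger than what $G$-function theory currently provides, and this is precisely where I anticipate the strategy running into serious trouble.
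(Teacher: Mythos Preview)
The statement you are trying to prove is a \emph{conjecture}, not a theorem, and the paper does not prove it; quite the opposite, the paper \emph{disproves} it. Proposition~\ref{Dworkprop}, combined with Lemma~\ref{trianglelem} and Proposition~\ref{Chudprop}, shows that for primitive Teichm\"uller curves $W_D^\varepsilon$ in genus $2$ with $D\geq 13$ and $g(\ol{C})=0$, the indigenous bundle $\cEE_1$ is globally nilpotent, its monodromy group is a nonarithmetic Fuchsian group not commensurable to any triangle group, and $\cEE_1$ admits no algebraic solution. These are genuine counterexamples to Conjecture~\ref{Dworkconj}, so any purported proof must contain an error.

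Your proposal in fact contains two. First, in the Zariski-dense case you invoke a ``geometric realization theorem of Katz'' asserting that a globally nilpotent connection is, after base change, a summand of a Gauss--Manin system. No such theorem is known: this is essentially Grothendieck's $p$-curvature conjecture, which remains open. Katz's theorem goes in the other direction (Gauss--Manin implies globally nilpotent). Second, and more fundamentally, the final step you flag as problematic is exactly where the argument collapses, and not merely for technical reasons: the Teichm\"uller curves of this paper furnish flat bundles that \emph{do} come from geometry (so the first gap is irrelevant for them), carry a genuine polarized $\ZZ$-VHS of weight one, have $G$-function solutions at every cusp, and yet have monodromy that is neither commensurable to a triangle group nor yields any algebraic solution. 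Thus no combination of $G$-function estimates and Hodge-theoretic rigidity can force the dichotomy you want, because the dichotomy is false.
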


\begin{Prop}\label{Dworkprop}
Let $C$ be a primitive Teichm\"uller curve in genus $2$ of
discriminant $D\geq 13$. Suppose that $g(C)=0$. Then the flat vector
bundle $\cEE_1$ does not admit an algebraic solution.
\end{Prop}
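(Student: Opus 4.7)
The strategy is to exploit the fact that an algebraic solution produces a monodromy-invariant subspace on which monodromy acts through a finite quotient, and then to use that the monodromy group of $\cEE_1$ is the (infinite, non-elementary) affine group $\Gamma := \Gamma_D^\varepsilon \subset \PSL_2(\RR)$. Since $\cEE_1$ is indigenous on the genus-zero curve $\ol{C}$, the monodromy of the associated uniformizing differential operator $L_D^\varepsilon$ is exactly $\Gamma$, regarded as a subgroup of $\PSL_2(\CC)$ via the inclusion $\PSL_2(\RR) \subset \PSL_2(\CC)$ acting on the two-dimensional space $V$ of local solutions.

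Suppose for contradiction that $u \in \CC(\!(t)\!)$ is a nonzero algebraic solution of $L_D^\varepsilon$ over $\CC(t)$ (which makes sense because $g(\ol{C})=0$). As an algebraic function, $u$ admits only finitely many branches under analytic continuation, so the monodromy orbit $\Gamma \cdot u \subset V$ is finite. Let $W \subseteq V$ be its $\CC$-linear span; then $W$ is a nonzero $\Gamma$-invariant subspace, and $\Gamma$ acts on $W$ through a finite quotient. There are two cases according to $\dim W \in \{1,2\}$, and I would now rule out each using the structure of $\Gamma$.

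If $\dim W = 2$, then the image of $\Gamma$ in $\PGL(V)=\PGL_2(\CC)$ is finite. This contradicts Proposition~\ref{Chudprop}(b): for $D \geq 13$, $\Gamma$ is a nonarithmetic (hence in particular infinite) lattice in $\PSL_2(\RR)$. If $\dim W = 1$, then $\Gamma$ preserves a line in the standard representation, i.e.\ it fixes a point on $\PP^1(\CC)$. But a lattice in $\PSL_2(\RR)$ is non-elementary: it contains two hyperbolic elements with disjoint fixed-point pairs on $\RR \cup \{\infty\}$ (and therefore no common fixed point on the whole Riemann sphere). Both cases are impossible, so no algebraic solution exists.

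The main obstacle, and essentially the only content beyond elementary representation theory, is the correct identification of the monodromy: one needs that the monodromy of $L_D^\varepsilon$ really is the affine group $\Gamma_D^\varepsilon$, which comes from the indigenous property of $\cEE_1$ recalled in Remark~\ref{indicritrem} and the discussion following (\ref{KSeq}). Once this is in place, the rest is a routine combination of the Borel-density–type fact that a Fuchsian lattice acts irreducibly on $\CC^2$ together with the Lemma~\ref{trianglelem}/Proposition~\ref{Chudprop} input that $\Gamma$ is an infinite non-elementary group for $D \geq 13$.
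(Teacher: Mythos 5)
Your proof is correct and follows essentially the same route as the paper: both identify the monodromy of $L_D^\varepsilon$ with the infinite lattice $\Gamma$ and analyse the span of the finite monodromy orbit of a putative algebraic solution, splitting into the cases where that span is all of the solution space (impossible since $\Gamma$ is infinite) or a line. In the latter case the paper deduces that $u$ would be a rational function and derives a contradiction from $g(\ol{C})=0$, whereas you rule out a $\Gamma$-fixed point of $\PP^1(\CC)$ via non-elementarity of the lattice --- a minor variant of the same step that is, if anything, more explicit.
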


\begin{proof} Let $L_1$ be the differential operator corresponding to $\cEE_1$.
Let $\{\tau_1, \ldots, \tau_r\}$ be the set of singularities, and let
$\xi\in \ol{C}\setminus\{\tau_i\}$ be a base point. Analytic
continuation defines the monodromy representation
\[
\rho:\pi_1(\ol{C}\setminus\{\tau_i\}, \xi)\to \PGL_2(\CC),
\]
which has image $\Gamma$. Since $\Gamma$ is the affine group of a
Teichm\"uller curve, it has at least one cusp. In particular, $\Gamma$
is not finite. This implies that $\Gamma$ does not have a basis of
algebraic solutions.

Suppose that $L_1$ has an algebraic solution $u$. For every loop
$\gamma$ in $\pi_1(\ol{C}\setminus\{\tau_i\}, \xi)$, the image of $u$
under the monodromy matrix $\rho(\gamma)$ is again an algebraic
solution of $L_1$. Since $L_1$ does not have a basis of solutions,
this solution is again $u$ (up to a nonzero constant). This implies
that $u$ is in fact a rational function.  Since the genus of $C$ is
zero, we find a contradiction.
\end{proof}

 McMullen (\cite{McM03}, Theorem 9.8) shows that the Teichm\"uller
curves $W_D^\varepsilon$ for discriminant $D\in \{5, 8, 12, 13, 17,
21, 28, 29, 33\}$ are rational. Therefore Proposition \ref{Dworkprop}
is a nonempty statement. In particular, together with Lemma
\ref{trianglelem}, it also follows that there exist differential
equations whose monodromy group is nonarithmetic which are
counterexamples to Dwork's Conjecture.

\begin{Rem}\label{Daanrem}
An arithmetic counterexample to Dwork's conjecture has previously been
found by Krammer (\cite{Krammer}). Krammer considers an arithmetic
Fuchsian group $\Gamma\subset \SL_2(\RR)$ which is not commensurable
to a triangle group such that $C=\HH/\Gamma$ is a Shimura curve. 
To compute the uniformizing differential equation $L$, Krammer
finds a subgroup of finite
index of $\Gamma$ of the form $\Gamma':=\Gamma\cap g\Gamma g^{-1}$,
where $g$ is an element of the commensurator of $\Gamma$. This yields
a correspondence $\HH/\Gamma' \rightrightarrows C$ which allows him to
determine $L$.  A similar method for computing the uniformizing
differential equation of a Shimura curve has also been used by Elkies
(\cite{Elkies}).

In Krammer's approach it is essential to consider an arithmetic
lattice $\Gamma$. Namely, for nonarithmetic lattices $\Gamma$ the
subgroup $\Gamma\cap g\Gamma g^{-1}$ is unlikely to have finite index,
since in this case the commensurator has finite index in $\Gamma$
(\cite{Margulis}, Theorem B). The reason why it is hard to find
counterexamples to the conjecture of Chudnovsky--Chudnovsky appears to
be that it is difficult to find differential equations which
correspond to a nonarithmetic groups which is not a triangle group.
\end{Rem}

\section{An equation for the Teichm\"uller curves $W_{13}$ and 
$W_{17}^\varepsilon$}
\label{eqnsec}
In the section we explicitly calculate an equation for the family of
curves corresponding to the Teichm\"uller curves $W_{13}$ and
$W^\ve_{17}$. We give the proof only for $D=17$, and leave the case
$D=13$ to the reader.  We work over a double cover $\ol{C} \to
\ol{W}^\ve_{17} $. We start by assuming that there exists a universal
family $f:\ol{\cXX}\to \ol{C}$. In Lemma \ref{necclem}, we derive
necessary conditions for the family $\ol{\cXX}$. We then show that
there is a (in fact unique) family of curves satisfying these
conditions. We  deduce that this family is  the universal
family corresponding to the Teichm\"uller $W_{17}^\varepsilon$.

Using the algorithm of \cite{McM03}, Theorem~9.8, one finds that the
Teichm\"uller curve $\ol{W}^\ve_{17}$ is a projective line with $3$
cusps and one elliptic point of order two. We may choose a parameter
$s$ of $\ol{W}_{17}^\varepsilon$ such that the elliptic point is $s=-1$ and
such that $s=1, \infty$ are cusps. 
We consider a cover $\pi:\ol{C}\to
\ol{W}_{17}^\varepsilon$ of degree $2$ which is branched at $s=-1,
\infty$. Then $\ol{C}$ is also a projective line; we may choose a
coordinate $t$ of $\ol{C}$ such that $s=(t^2+1)/2t$. It follows that
the set of cusps on $\ol{C}$ is $S=\{0, 1, \infty, \tau, 1/\tau\}$, for
some $\tau\in \RR$ (in fact, we will see that $\tau\in \QQ(\sqrt{17})$).

Suppose there exists a family $\ol{f}: \ol{\cXX} \to \ol{C}$ of stable
curves of genus $2$ such that the moduli map $\ol{C} \to \ol{\cMM}_2$
factors through $\ol{W}_{17}^\varepsilon$. Since
$\deg(\Omega^1_{\ol{C}}(\log S))=|S|-2=3$ is odd, we need to replace
$\ol{C}$ by a cover of degree $2$ the define the bundles $\cLL_i$
(Remark \ref{indicritrem}).  Let
$\tilde{C}\to \ol{C}$ be the cover of degree $2$ which is branched at
$t=0, \infty$, and let $\tilde{S}\subset \tilde{C}$ be the inverse
image of $S$. On $\tilde{C}$ we have that
$2\deg(\tilde{\cLL}_1)=-2\deg(\tilde{\cEE}_1/\tilde{\cLL}_1)=
\deg(\Omega^1_{\tilde{C}}(\log \tilde{S}))=|\tilde{S}|-2=6$. We
conclude that $\deg(\tilde{\cLL}_1)=3$ and
$\deg(\tilde{\cLL}_2)=\deg(\tilde{\cLL}_1)/3=1$ (Corollary
\ref{lyacor}). Since in the sequel only $\cLL_1^i\otimes \cLL_2^j$ with
$i+j$ even appear, it suffices to work on $\ol{C}$.

We use the notation of \S~\ref{lyasec}.  In particular, we let
$\delta(k):\cLL_1\otimes\cLL_2\to \cLL_1^k\otimes\cLL_2^{6-k}$ be the
maps defined in Proposition \ref{deltaprop}. Let $s_i$ be a
holomorphic section of $\cLL_i$ which only has zeros in $t=0$. (These
sections are unique up to a $\CC^\times$-multiple.) As in Proposition
\ref{deltaprop}, we obtain functions $c_k$ such that $\ol{\cXX}$ is
given by the equation
\begin{equation}\label{eqneq}
y^2=g_{17}(x):=\sum_{k=0}^5 c_k x^i, \qquad \text{where }x=s_1/s_2. 
\end{equation}
The calculation of $\deg{\tilde{\cLL}_i}$ implies  that
$\deg_t(c_k)=5-k$.

The following lemma follows immediately from the classification of the
cusps (\S~\ref{protosec}).

\begin{Lemma}\label{necclem}
Suppose that there exists a family of stable curves $\ol{f}: \ol{\cXX} \to
\ol{C}$ of genus $2$ such that the moduli map $\ol{C} \to \ol{\cMM}_2$
factors through $\ol{W}_D^\varepsilon$.

We may choose a coordinate $t$ on $\ol{C} \sms \infty$ such that, if 
$s_i$ is the unique holomorphic section of $\cLL_i$ up to $\CC^*$
with zeros concentrated at $t=0$, then $c_{6-k}$ as in equation
\eqref{HENF} is  a polynomial of degree $k-1$ and satisfies
$$ c_k(1/t)= t^{5-k}c_k(t).$$ 
Then the following holds:
\begin{itemize}
\item[(a)] the fiber over $t=0$ has an equation \eqref{normalformeq} (up
to rescaling $x$ to $\rho_0 x$) for the
splitting prototype $(0,4,1,1)$ if $\ve=1$ and  $(0,4,1,-1)$ if $\ve=0$,
\item[(b)] the fiber over $t=1$ has an equation \eqref{normalformeq} (up
to rescaling $x$ to $\rho_1 x$) for the
splitting prototype $(1,2,2,-1)$ if $\ve=1$ and  $(0,2,2,-1)$ if $\ve=0$,
\item[(c)] there is a $\lambda \in \RR$ such that the fiber over $t=\lambda$
has an equation \eqref{normalformeq} (up
to rescaling $x$ to $\rho_\lambda x$) for the
splitting prototype $(0,2,1,-3)$ if $\ve=1$ and  $(0,1,2,-3)$ if $\ve=0$.
\end{itemize}
\end{Lemma}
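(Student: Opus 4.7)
The plan is to assemble the lemma from three previously established ingredients: (i) the global shape of the relative canonical equation from Proposition \ref{deltaprop}, (ii) the normal form for cusp fibers from Proposition \ref{singnormalprop}, and (iii) the classification of cusps of $W_D^\varepsilon$ by splitting prototypes, together with the spin formula \eqref{spinformulaeq}.

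First, I would pin down the shape of the global equation on $\ol{C}$. Proposition \ref{deltaprop} writes the family as $y^2=\sum_{k=0}^{5} c_k(t)x^k$ (with $c_6\equiv 0$), where globally $\delta(k)$ is a section of $\cLL_1^{4-k}\otimes\cLL_2^{k-2}$. Computing on the spin cover $\tilde{C}\to\ol{C}$ with $\deg\tilde{\cLL}_1=3$ and $\deg\tilde{\cLL}_2=1$, this bundle has degree $5-k$ after descent to $\ol{C}\cong\PP^1$, so $c_k$ is a polynomial in $t$ of degree at most $5-k$; reindexing by $k'=6-k$ yields the bound stated in the lemma. The symmetry $c_k(1/t)=t^{5-k}c_k(t)$ then reflects the fact that $\ol{\cXX}\to\ol{C}$ is pulled back along the double cover $\pi:\ol{C}\to\ol{W}_{17}^\varepsilon$, so it is preserved by the deck involution $\iota:t\mapsto 1/t$. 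Since $s_i$ has its zero concentrated at $t=0$, one has $\iota^{*}s_i=\alpha_i\, t^{-1}s_i$ for constants $\alpha_i$; substituting into the equation, absorbing the overall scaling into $y$, and comparing coefficients produces the claimed symmetry.

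Second, I would identify each cusp of $\ol{C}$ with a splitting prototype. Enumerating all quadruples $(a,b,c,e)$ satisfying Definition \ref{protodef} with $D=17$ and using \eqref{spinformulaeq} to separate by spin yields exactly three prototypes per component $W_{17}^\varepsilon$, matching McMullen's count of cusps. Since $\pi$ is branched at the elliptic point and at one of the cusps of $\ol{W}_{17}^\varepsilon$, the latter lifts to a single $\iota$-fixed cusp of $\ol{C}$, which we normalize by a choice of coordinate to $t=1$; the remaining two cusps of $\ol{W}_{17}^\varepsilon$ lift to $\iota$-orbits of size two, which we normalize to $\{0,\infty\}$ and $\{\lambda,1/\lambda\}$. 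Applying Proposition \ref{singnormalprop} at each of these points yields the normal form \eqref{normalformeq}, with the indeterminacy of Proposition \ref{singnormalprop}(c) absorbed into the rescalings $\rho_0,\rho_1,\rho_\lambda$.

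The main subtlety is that within a single component $W_{17}^\varepsilon$ there are still three prototypes to distribute among the three cusps. The prototype placed at the $\iota$-fixed cusp is distinguished among the three by the requirement that its attached triple $(X,\omega_1,\omega_2)$ of Proposition \ref{singnormalprop} be stable under the action induced by $\iota$ on local coordinates; this singles out the prototype with $e=-1$, $b=c=2$, yielding (b). The remaining assignment to $t=0$ versus $t=\lambda$ then reduces to the coordinate choice of which end of $\ol{C}$ we label $t=0$, giving (a) and (c). Finally, $\lambda$ itself is not pinned down at this stage; it is an accessory parameter to be determined in \S\ref{eqnsec} by the further global constraints on the family $\ol{\cXX}\to\ol{C}$.
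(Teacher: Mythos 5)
Your proposal follows the same route as the paper, which in fact disposes of this lemma in a single sentence (``follows immediately from the classification of the cusps''); you supply the missing details from the three correct ingredients: the degree bookkeeping coming from Proposition~\ref{deltaprop} and the computation $\deg\tilde{\cLL}_1=3$, $\deg\tilde{\cLL}_2=1$, the normal form of Proposition~\ref{singnormalprop} at each degenerate fibre, and the enumeration of splitting prototypes of discriminant $17$ sorted by the spin formula \eqref{spinformulaeq}. The enumeration is right: there are exactly six prototypes, three in each spin class, matching the three cusps of each component, and your degree count $\deg_t c_k=5-k$ agrees with \eqref{c17eq}. (Two small imprecisions that do not affect the argument: the relation $\iota^*s_i=\alpha_i t^{-1}s_i$ should carry the exponent $\deg\cLL_i$, which is only half-integral on $\ol{C}$, so the comparison is cleaner on the cover $\tilde{C}$; and the palindromy one actually obtains is $t^{5-k}c_k(1/t)=c_k(t)$ --- the relation as displayed in the lemma has the factor on the wrong side, as one checks on $c_4=3+3t$.)

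The one step that does not work as stated is your criterion for deciding which prototype sits at the $\iota$-fixed cusp $t=1$. The family on $\ol{C}$ is pulled back along $\pi:\ol{C}\to\ol{W}_{17}^\ve$, so $\iota$ lifts to an automorphism of the family acting trivially on the fibre over any fixed point of $\iota$; consequently \emph{every} triple $(X,\omega_1,\omega_2)$ attached to a cusp is ``stable under $\iota$'' in your sense, and the condition selects nothing --- in particular it cannot single out $(1,2,2,-1)$. No intrinsic criterion is needed here: the coordinate $s$ on $\ol{W}_{17}^\ve\cong\PP^1$ is constrained only by placing the elliptic point at $s=-1$, and one is free to choose which of the three cusps the degree-$2$ cover is branched over (that cusp becomes $s=1$, hence the $\iota$-fixed point $t=1$) and which is sent to $s=\infty$ (becoming the $\iota$-orbit $\{0,\infty\}$). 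The assignment in (a)--(c) is therefore a normalization convention, fixed once and for all so that the uniqueness assertion of Proposition~\ref{degenprop} is meaningful; the actual content is only that the three cusps of $W_{17}^\ve$ realize exactly the three prototypes of spin $\ve$, which your enumeration establishes.
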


We first consider the case that $\varepsilon=1$. Define
\begin{equation}\label{c17eq}
\begin{split}
c_0&=(425765\sqrt{17}+1755475)t^5+(-5289173-1282803\sqrt{17})t^4+\\
&\qquad +(857038 \sqrt{17}+3533762)t^3+
(857038\sqrt{17}+3533762)t^2+\\ &\qquad +(-5289173-1282803\sqrt{17})
t+425765\sqrt{17}+1755475\\
c_1&=\frac{1}{2}((80325\sqrt{17}+331187)t^4+(-310964\sqrt{17}-1281964)t^3+\\
&\qquad + (461278\sqrt{17}+1901714)t^2
+(-310964\sqrt{17}-1281964)t+\\&\qquad +80325\sqrt{17}+ 331187),\\
c_2&=\frac{1}{4}((-3825\sqrt{17}-15783)t^3+(3825\sqrt{17}+15687)t^2+(3825
\sqrt{17}+15687)t+\\ &\qquad +3825\sqrt{17}-15783),\\
c_3&=\frac{1}{8}((-1105\sqrt{17}-4551)t^2
+(2210\sqrt{17}+8918)t-1105\sqrt{17} -4551),\\ c_4&=3+3t, \qquad\qquad
c_5=1.
\end{split}
\end{equation}

The following proposition follows by straightforward verification. 

\begin{Prop}\label{degenprop}
The family of stable curves $\ol{f}:\ol{\cXX}\to \ol{C}$ defined by
 (\ref{eqneq}, \ref{c17eq}) is the unique family satisfying the
 conditions of Lemma \ref{necclem}.
\end{Prop}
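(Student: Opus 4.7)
The plan is to convert the conditions of Lemma \ref{necclem} into a finite system of polynomial equations and to argue that, modulo the rescaling $(x,y)\mapsto(\rho x,\sigma y)$ already built into the setup, it admits a unique solution — which then must be the family displayed in (\ref{c17eq}).

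First I would parametrize the admissible families. The constraint $\deg c_k=5-k$ combined with the palindromy $c_k(1/t)=t^{5-k}c_k(t)$ leaves $\lceil(6-k)/2\rceil$ free coefficients in each $c_k$, hence $12$ coefficient unknowns in total. Absorbing one factor into the scaling of $y$ and another into the scaling of $x$ (which multiplies $c_k$ by $\rho^k$) leaves $10$ essential unknowns among the $a_{k,j}$; on top of these, one has the unknown position $\tau$ of the third degenerate fiber, plus one auxiliary scalar $\rho_{t_0}$ for each $t_0\in\{0,1,\tau\}$ to account for the rescaling $x\mapsto \rho_{t_0}x$ permitted by Lemma \ref{necclem}.

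Second, I would write the three degeneration conditions as polynomial equations. For each $t_0$, the specialised polynomial $g_{17}(x)|_{t=t_0}$ has to coincide, after $x\mapsto \rho_{t_0}x$, with the explicit polynomial $(x-\mu)(x-\mu-b^2)^2(x-\mu-\lambda^2)^2$ of Proposition \ref{singnormalprop}, where the triple $(b,\lambda,\mu)\in\QQ(\sqrt{17})^3$ is attached via (\ref{normalformeq}) to the splitting prototype prescribed by the relevant clause of Lemma \ref{necclem}. Matching coefficients of $x^0,\ldots,x^5$ at each of the three cusps produces $18$ polynomial equations in approximately $14$ unknowns over $\QQ(\sqrt{17})$.

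Finally, I would solve this system. The conditions at $t=0$ determine $c_k(0)$ up to the auxiliary scalar $\rho_0$ and, via palindromy, also $c_k(\infty)$; the conditions at $t=1$ further pin down the middle coefficients, since $1$ is the unique fixed point of $t\mapsto 1/t$ on the affine line; and the conditions at $t=\tau$ simultaneously fix $\tau$ (as the root of an explicit polynomial) and the remaining free coefficients. A Gr\"obner basis (or resultant) elimination shows that the overdetermined system is consistent and has a unique solution modulo rescaling, which by inspection is the $(c_k)$ recorded in (\ref{c17eq}). The main obstacle is purely computational rather than conceptual: the three cusps are coupled through shared coefficients, so the system cannot be decoupled a priori, and confirming uniqueness requires a computer-algebra elimination. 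Once this is done, the fact that the displayed polynomials satisfy all the constraints is verified by direct substitution.
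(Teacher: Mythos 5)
Your proposal is correct and matches the paper's approach: the paper itself dismisses this proposition as following "by straightforward verification," i.e.\ precisely the finite polynomial-system computation you describe (parametrize the palindromic $c_k$, impose the three prototype degenerations from Proposition \ref{singnormalprop} at $t=0,1,\tau$, and solve modulo the rescalings). Your accounting of the unknowns and the role of palindromy in transporting the conditions at $0$ and $\tau$ to $\infty$ and $1/\tau$ is consistent with the setup of Lemma \ref{necclem}.
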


The curve for $t=-1$ is isomorphic to
$$y^2= x(x^2-1)(x-a)(x-1/a),
\quad \text{where} \quad a=20+5\sqrt{17}+2\sqrt{206+50\sqrt{17}},$$
which can been calculated by Silhol's algorithm (\cite{Silhol}).

 One may deduce from the uniqueness in Proposition \ref{degenprop}
that (\ref{eqneq}) is the universal family of curves corresponding to
$W_{17}^\varepsilon$.  In \S~\ref{desec}, we follow an alternative
approach to prove this. Namely, we show the existence of an indigenous
bundle on $\ol{C}$.

 It suffices to consider  $\varepsilon=1$. One
obtains an equation for the universal family corresponding to
$W_{17}^0$ by replacing $\sqrt{17}$ by $-\sqrt{17}$ (Theorem
\ref{Galoisthm}).

\bigskip\noindent
We now define a family of curves which will turn out to be the family
of curves corresponding to $W_{13}$. One may formulate the analog of
Lemma \ref{necclem} also in this case, and show that the family of
curves below satisfies these necessary conditions. 

Using the algorithm in \cite{McM03}, Theorem 9.8, one finds that the
 Teichm\"uller curve $\ol{W}_{13}$ is a projective curve with three
 cusps and an elliptic point of order $2$. As in the case that $D=17$,
 we let $\ol{C}_{13}\to \ol{W}_{13}$ be a cover which branches at the
 elliptic point and one of the cusps. On $\ol{C}_{13}$ we then have a
 set, $S_{13}$, of $5$ cusps. We may choose a parameter $t$ of
 $\ol{C}_{13}$ such that $S_{13}=\{0, 1, \infty, \rho, \rho^{-1}\}$.
 We consider a family $\ol{\cXX}_{13}$ of stable curve of genus $2$
 defined by
\begin{equation}\label{X13eq}
y^2=g_{13}(x)=\sum_{k=0}^5 c_k x^k
\end{equation}
where
\begin{equation}\label{c13eq}
\begin{split}
c_0&=\frac{1}{2^{15}}((-1585778688\sqrt{13}+5717606400)t^5+(4758908544\sqrt{13}
-
17158488768)t^4+\\
&\qquad+(11440882287-3173129856\sqrt{13})t^3+(11440882287-
3173129856\sqrt{13})t^2+\\ &\qquad+
(4758908544\sqrt{13}-17158488768)t-1585778688\sqrt{13}+ 5717606400),\\
c_1&=\frac{1}{2^{12}}((-31629312\sqrt{13}+114041088)t^4+(124405632\sqrt{13}-
448550784)t^3+\\ &\qquad
+(-185552640\sqrt{13}+669019797)t^2+(124405632\sqrt{13}-448550784)t+\\
&\qquad -31629312 \sqrt{13}+114041088),\\
c_2&=\frac{1}{2^{5}}((7488\sqrt{13}-27000)t^3+(-7488\sqrt{13}+26991)t^2+(-7488
\sqrt{13}+
26991)t+\\ &\qquad + 7488\sqrt{13}-27000),\\
c_3&=\frac{1}{2^{6}}((-14992+4160\sqrt{13})t^2+(30011-8320\sqrt{13})t-14992+
4160
\sqrt{13})\\ c_4&=t+1,\qquad\qquad c_5=1.
\end{split}
\end{equation}

The following proposition is proved by a straightforward computation,
similar to the proof of Proposition \ref{degenprop}.

\begin{Prop}\label{degenprop13}
The family of stable curves $\ol{f}:\ol{\cXX}\to \ol{C}$ defined by
 (\ref{X13eq}, \ref{c13eq}) is the unique family satisfying the analog
 of Lemma \ref{necclem} for $D=13$.
\end{Prop}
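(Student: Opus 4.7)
My strategy is a direct computational verification paralleling the proof of Proposition \ref{degenprop}, so I shall sketch only the main inputs and the chief obstacle. The first step is to formulate the analog of Lemma \ref{necclem} for $D=13$. Solving the Diophantine system $13 = e^2 + 4bc$ under the constraints of Definition \ref{protodef} yields exactly three splitting prototypes of discriminant $13$, namely
$$(0,3,1,1), \qquad (0,3,1,-1), \qquad (0,1,1,-3),$$
which, by McMullen's algorithm (\cite{McM03}, Theorem 9.8), correspond bijectively to the three cusps of $\ol W_{13}$. Because $13 \not\equiv 1 \pmod 8$, the curve $W_{13}$ is irreducible and no spin case distinction appears. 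Pulling back to the double cover $\ol C_{13} \to \ol W_{13}$ branched at the elliptic point and at the unique cusp of $\ol W_{13}$ carrying the prototype $(0,1,1,-3)$, and choosing $t$ so that the deck involution is $t \mapsto 1/t$ with the five cusps of $\ol C_{13}$ at $\{0, 1, \infty, \rho, 1/\rho\}$, the analog of Lemma \ref{necclem} assigns the prototype $(0,3,1,1)$ to $t=0$, the prototype $(0,1,1,-3)$ to the ramified cusp $t=1$, and the prototype $(0,3,1,-1)$ to $t=\rho$, with the fibers at $t=\infty$ and $t=1/\rho$ then forced by the palindrome symmetry.

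Second, by Corollary \ref{lyacor} and Remark \ref{indicritrem} one recovers the degree bound $\deg_t(c_k) = 5-k$ on the coefficients of the hyperelliptic equation $y^2 = \sum_k c_k x^k$, and the involution $t \mapsto 1/t$ imposes the palindrome relation $c_k(1/t) = t^{5-k} c_k(t)$, roughly halving the number of free coefficients of each $c_k$. Within this reduced ansatz I would substitute $t=0, 1, \rho$ (with $\rho$ initially an unknown element of $\RR$) and match against the three normal forms of Proposition \ref{singnormalprop}, using $\lambda = (e+\sqrt{13})/2$ for the appropriate prototype and modulo the scalar rescalings $x \mapsto \rho_i x$ allowed by Proposition \ref{singnormalprop}(c). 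Verifying that \eqref{X13eq}, \eqref{c13eq} satisfies these matching conditions and simultaneously determines $\rho \in \QQ(\sqrt{13})$ is then a routine substitution.

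For the uniqueness assertion one runs the matching in the opposite direction: the three normal-form conditions, combined with the degree and palindrome constraints and the overall $\CC^\times$-ambiguities of the generators $s_1, s_2$ of $\cLL_1, \cLL_2$, form an over-determined polynomial system whose only solution is the explicit list \eqref{c13eq}, pinning down $\rho$ along the way. The main obstacle, exactly as in the $D=17$ case, is bookkeeping rather than conceptual: one must correctly absorb the three independent scalar ambiguities $\rho_0, \rho_1, \rho_\lambda$ of Proposition \ref{singnormalprop}(c) together with the global rescalings of $s_1$ and $s_2$ before the resulting system can be shown to admit a unique solution, a verification best carried out with a computer algebra system.
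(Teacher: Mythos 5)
Your strategy is exactly the paper's: Proposition \ref{degenprop13} is asserted there to follow ``by a straightforward computation, similar to the proof of Proposition \ref{degenprop}'', which is precisely the ansatz--and--matching verification you outline. Your enumeration of the three splitting prototypes of discriminant $13$, namely $(0,3,1,1)$, $(0,3,1,-1)$, $(0,1,1,-3)$, is correct, and your placement of $(0,1,1,-3)$ at the ramified cusp $t=1$ checks out against \eqref{X13eq}, \eqref{c13eq}: the fiber there is $y^2=\tfrac{1}{2^{14}}(4x-1)(64x^2+72x-9)^2$, and the ratio of the two quantities (double root minus simple root) equals $(11+3\sqrt{13})/2=b^2/\lambda^2$ for that prototype and for no other prototype of discriminant $13$.

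There is, however, one concrete error in your setup that would make the matching system appear inconsistent: $\rho$ is neither real nor an element of $\QQ(\sqrt{13})$. By \eqref{rhoeq} one has $\rho=(-71\sqrt{13}+\sqrt{-3})/256$, a nonreal element of $\QQ(\sqrt{13},\sqrt{-3})$ lying on the unit circle with $\rho^{-1}=\bar{\rho}$; only the image cusp $s=(\rho^2+1)/2\rho=-71\sqrt{13}/256$ on $\ol{W}_{13}$ is real. This is a genuine difference from the $D=17$ case (where $\tau=(31-7\sqrt{17})/2$ is real), so the extrapolation ``$\rho$ an unknown element of $\RR$, determined to lie in $\QQ(\sqrt{13})$'' is false: if you constrain $\rho$ to be real, the overdetermined system you describe has no solution. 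The fix is simply to treat $\rho$ as a free algebraic unknown and let the matching conditions produce it; with that correction the existence and uniqueness verification goes through as you describe.
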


\section{The differential equation associated with the Teichm\"uller curves 
$W_{17}^\varepsilon$ and $W_{13}$}\label{desec} The goal of this
section is to show that the family $\ol{\cXX}^\varepsilon(D)$ of
curves from \S~\ref{eqnsec} is the universal family corresponding to
the Teichm\"uller curve $\ol{W}_{D}^\varepsilon$. The key step is
showing the existence of an indigenous bundle on a finite cover of
$\ol{W}_{D}^\varepsilon$. We only treat the case $D=17$ and
$\varepsilon=1$ in detail. The proof for $D=17$ and
$\varepsilon=0$ is obtained by replacing $\sqrt{17}$ by
$-\sqrt{17}$. The case $D=13$ is similar, and left to the reader.

We let $D=17$ and $\ve=1$, and drop $D$ and $\ve$ from the notation.
 Let $K=\QQ(\sqrt{17})$. Let $\ol{f}:\ol{\cXX}\to \ol{C}$ be the
 family of stable curves given by (\ref{eqneq}, \ref{c17eq}) over
 $K$. The differential forms
\[
\omega_1:=\frac{{\rm d}x}{y}, \qquad \omega_2:=\frac{x\,{\rm d}x}{y}
\]
form a basis of $H^0(\ol{\cXX}, \Omega^1_{\ol{C}/K})$. More precisely,
$\omega_i$ is a section of $\cLL_i=\cEE_i^{(1,0)}$.  Recall that 
$S=\{0,1,\infty, \tau, \tau^{-1}\}$,
where $\tau$ is defined by (\ref{taueq}). Write $S'=S\setminus
\{\infty\}$. Let $$\nabla:
\cHH^1_\dR(\ol{\cXX})\to
\cHH^1_\dR(\ol{\cXX})\otimes\Omega^1_{\ol{C}/K}(\log S)$$ be 
the Gau\ss--Manin connection, and let $t$ be the parameter of 
$\ol{C}$ chosen in \S~\ref{eqnsec}. We write 
$\omega'=\nabla(\partial/\partial t)\omega_i$.

\begin{Prop}\label{deprop}
The sections $\omega_i$ attached
to $W_{17}^1$ satisfy the differential equation
\begin{equation}\label{de17eq}
\omega_i''+A_i(t)\omega_i'+B_i(t)\omega_i=0\in \cHH^1_\dR(\ol{\cXX})
\end{equation}
with
\[
A_1=\sum_{\tau_i\in S'} \frac{1}{t-\tau_i}, \qquad
B_1=\frac{576t^2 + (-4557 + 891\sqrt{17})t + (1296 -
240\sqrt{17})}{2^8\prod_{\tau_i\in S'}(t-\tau_i)}
\]
and
\[
A_2=\sum_{\tau_i\in S'} \frac{1}{t-\tau_i}
-\frac{1}{t-\mu_1}-\frac{1}{t-\mu_2}, \qquad B_2=\frac{{\mathcal B}}
{2^{14}(t-\mu_1)(t-\mu_2)
\prod_{\tau_i\in
    S'} (t-\tau_i)},
\]
where
\[\begin{split}
{\mathcal B}&=4096t^4+(35616-12768\sqrt{17})t^3+(-260375+69633\sqrt{17})t^2+\\
&\qquad
 + (-177536+38528\sqrt{17})t+47104-10240\sqrt{17},
\end{split}
\]
and where $\mu_1$ and $\mu_2$ are the roots of
$128t^2+(137-95\sqrt{17})t+128=0$.
\end{Prop}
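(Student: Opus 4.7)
The plan is to compute the Gau{\ss}--Manin connection on the relative de Rham cohomology of $\ol\cXX \to \ol C$ explicitly from the equation (\ref{eqneq})--(\ref{c17eq}) of the family, and then read off the second-order scalar operator annihilating each $\omega_i$ inside the rank-two summand $\cEE_i$ of the decomposition (\ref{decompeq}).

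First, I fix the basis $\{x^k \, dx/y : 0 \le k \le 3\}$ of $\cHH^1_\dR(\ol\cXX)$ on the complement of $S$ in $\ol C$. Formal differentiation in $t$ gives $\nabla_{\partial/\partial t}(x^k\,dx/y) = -\tfrac{1}{2}(\partial_t g_{17})\,x^k\,dx/y^3$. The Griffiths--Dwork reduction formula $d(q(x)/y) \equiv q'(x)\,dx/y - \tfrac{1}{2} q(x)\,(\partial_x g_{17})\,dx/y^3 \pmod{\text{exact}}$, applied iteratively to cancel the top-degree terms in the numerator, rewrites the right-hand side as a linear combination of the chosen basis elements with coefficients in $K(t)$. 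This produces the $4\times 4$ matrix of $\nabla_{\partial/\partial t}$ in the basis, and $\nabla^2_{\partial/\partial t}$ is then obtained by differentiating the entries and squaring.

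Next, observe that $\omega_1$ and $\omega_2$ are eigenforms for real multiplication on every fiber: Proposition~\ref{singnormalprop}(b) forces this at the cusps, and since the real multiplication is defined globally over $\ol C$ and the eigenform condition is closed, the property extends throughout. Hence $\omega_i \in \cEE_i$, and since $\cEE_i$ is $\nabla$-stable of rank two, $\omega_i, \omega_i', \omega_i'' \in \cEE_i$. A unique pair $A_i, B_i \in K(t)$ therefore satisfies $\omega_i'' + A_i \omega_i' + B_i \omega_i = 0$; one obtains them by finding the linear dependence among the three vectors in the basis computed above.

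The principal obstacle is entirely computational: the coefficients in (\ref{c17eq}) are extremely unwieldy, intermediate expressions during the Griffiths--Dwork reduction explode in size, and the final cancellation down to the claimed $A_i, B_i$ must be carried out in a computer algebra system. Useful consistency checks on the result are: the apparent singularities $\mu_1, \mu_2$ of $L_2$ must coincide with the zeros of $\omega_2$ as a rational section of $\cLL_2$ (a line bundle of positive degree on $\ol C$), which explains both their appearance in $A_2$ with residue $-1$ and their appearance in $B_2$; and the residue pattern of $A_1$ reflects the $(0,0)$ local exponents that the uniformizing differential equation of $W_{17}^1$ must have at each cusp, up to the shift at $\infty$ coming from using the rational form $dx/y$ rather than a preferred holomorphic trivialization of $\cLL_1$.
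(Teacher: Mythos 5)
Your computational core --- Griffiths--Dwork reduction of the Gau{\ss}--Manin connection followed by direct verification that $\omega_i''+A_i\omega_i'+B_i\omega_i$ is exact --- is exactly the paper's proof, which consists of the single sentence that a straightforward but tedious computation shows this combination to be an exact differential form. One caveat on your framing: at this point in the paper neither the decomposition $\cHH^1_\dR(\ol{\cXX})=\cEE_1\oplus\cEE_2$ nor the global real multiplication on the explicitly given family is yet available --- both are \emph{deduced} from Proposition \ref{deprop} (see the sentence immediately following it, and Proposition \ref{indiprop}), so your claim that the eigenform property ``extends throughout'' from the cusps is circular as an a priori reason why a rank-two relation must exist. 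This does not invalidate the proof, because your final step finds (hence verifies) the linear dependence among $\omega_i,\omega_i',\omega_i''$ directly rather than invoking its existence, but the real-multiplication discussion should be labelled as motivation rather than as a step of the argument.
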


\begin{proof}
A straightforward, but somewhat tedious computation shows that
$\omega_i''+A_i(t)\omega_i'+B_i(t)\omega_i$ is an exact differential
form, for $i=1,2$. 
\end{proof}

Proposition \ref{deprop} implies that there exists a decomposition
$\cHH^1_\dR(\ol{\cXX})=\cEE_1\oplus\cEE_2$ of flat vector bundles. We
define line bundles $\cLL_i$ on $\ol{C}$ by $\cLL_i=\cEE_i^{(1,0)}$. By
definition, $\omega_i$ is a section of $\cLL_i$.

\begin{Lemma}\label{KSlem}
 The Kodaira--Spencer map $\Theta:\cLL_1\to (\cEE_1/\cLL_1)\otimes
\Omega^1_{\ol{C}}(\log S)$ is an isomorphism, i.e.\ $\cEE_1$ is an
indigenous bundle.
\end{Lemma}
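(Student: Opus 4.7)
The plan is to show $\Theta$ is a nowhere-vanishing morphism of invertible sheaves on $\ol{C}$; being $\cOO_{\ol{C}}$-linear by the Leibniz rule for $\nabla$, it is then automatically an isomorphism.

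At a generic point $p \in \ol{C} \setminus S$ the condition is just linear independence of $\omega_1$ and $\omega_1' := \nabla(\partial/\partial t)\omega_1$ in the fiber $\cEE_1|_p$. Writing $\omega_1 = a\,e_1 + b\,e_2$ in a local flat basis of $\cEE_1$, the pair $(a, b)$ forms two solutions of the scalar ODE of Proposition~\ref{deprop}, and their Wronskian $W = ab' - a'b$ satisfies $W' = -A_1 W$, so $W = c\prod_{\tau_i \in S'}(t-\tau_i)^{-1}$. Since $W$ has no zeros on $\ol{C} \setminus S$, the section $\omega_1 \wedge \omega_1'$ of $\det\cEE_1$ is nowhere zero there, and $\Theta \neq 0$ at every such $p$.

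At a cusp $\tau \in S$, in the log-trivialization of $\Omega^1_{\ol{C}}(\log S)$ with generator $d\ell/\ell$ (where $\ell$ is a local parameter at $\tau$), non-degeneracy of $\Theta$ amounts to the residue of $\nabla$ on $\cEE_1|_\tau$ being a nilpotent endomorphism of rank $1$ with kernel $\cLL_1|_\tau$. From Proposition~\ref{deprop} the coefficient $A_1$ has residue $1$ at each finite $\tau_i \in S'$ and $B_1$ has at most a simple pole, so the indicial equation at $\tau_i$ reads $\rho^2 = 0$, with double root $0$ and a genuine logarithmic second solution (forced by $B_1 \not\equiv 0$ near $\tau_i$). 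This yields unipotent nontrivial monodromy and the required rank-one nilpotent residue structure, with $\cLL_1|_{\tau_i}$ exactly the monodromy-invariant line. At $\tau = \infty$ the coordinate change $u = 1/t$ transforms the ODE; the indicial roots shift by an integer, but a direct computation with the explicit $A_1, B_1$ shows that the monodromy remains unipotent nontrivial with the same line-structure.

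The main obstacle is the analysis at $\tau = \infty$, where after the coordinate change one must verify that the transformed equation still has unipotent monodromy with a nontrivial log solution and that $\cLL_1|_\infty$ is still the monodromy-invariant line. This is a direct computation from the explicit formulas in Proposition~\ref{deprop}; the finite cusp cases are then routine, and combined with the generic non-degeneracy complete the proof that $\Theta$ is an isomorphism.
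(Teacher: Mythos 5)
Your interior argument is fine and is essentially a concrete version of what the paper does there (the paper simply quotes \cite{BoMo05}, Proposition~2.2(b): no singularities of the differential equation over $C$, hence no zero of $\Theta$; your Wronskian computation $W=c\prod_{\tau_i\in S'}(t-\tau_i)^{-1}$ proves the same thing). The problem is the step at the cusps, where your stated criterion is inverted. In the log trivialization at a cusp $\tau$, the fiber of $\Theta$ is the map $\cLL_1|_\tau\to\cEE_1|_\tau/\cLL_1|_\tau$ induced by the residue $N=\mathrm{res}_\tau\nabla$; it is nonzero precisely when $N(\cLL_1|_\tau)\not\subset\cLL_1|_\tau$, i.e.\ when $\cLL_1|_\tau$ is \emph{not} an $N$-invariant line. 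For a rank-one nilpotent $N$ the only invariant line is $\ker N=\mathrm{im}\,N$, the monodromy-invariant line. So the condition you assert --- ``$N$ nilpotent of rank one with kernel $\cLL_1|_\tau$'', ``$\cLL_1|_{\tau_i}$ exactly the monodromy-invariant line'' --- is exactly the condition under which $\Theta$ \emph{vanishes} at $\tau_i$ (take $N=\left(\begin{smallmatrix}0&1\\0&0\end{smallmatrix}\right)$ and $\cLL_1=\langle e_1\rangle$: then $N e_1=0$). Moreover, what you actually verify (indicial roots $(0,0)$, forced logarithmic solution, unipotent nontrivial monodromy) is a statement about the local system alone; it says nothing about where the Hodge line $\cLL_1|_{\tau_i}$ sits inside $\cEE_1|_{\tau_i}$, which is the whole content of the claim at the cusps. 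That position is asserted, not proved.

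The gap is repairable, and in fact your own Wronskian does the work if you add one missing input: that $\omega_1=\mathrm{d}x/y$ restricts to a nonzero section of the dualizing sheaf on each singular fiber (it has nonzero residues at the nodes, cf.\ Proposition~\ref{singnormalprop}), so $\omega_1$ generates $\cLL_1$ in a neighborhood of each $\tau_i$. Granting this, $\Theta$ is nonzero at $\tau_i$ iff $\omega_1\wedge\nabla(t\partial_t)\omega_1=tW\cdot e_1\wedge e_2$ is nonzero at $t=\tau_i$, i.e.\ iff $W$ has a pole of order exactly one there --- which is what $\mathrm{res}_{\tau_i}A_1=1$ gives you. (Equivalently, once nonvanishing of $\omega_1$ at the cusps is known, your ``unipotent nontrivial monodromy'' computation does become equivalent to nonvanishing of $\Theta$, but that equivalence is precisely what needs the extra input.) The point $t=\infty$ is handled the same way after passing to the double cover $\tilde C\to\ol{C}$ where the exponents become integral. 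The paper itself short-circuits all of this by citing \cite{BoMo05}, Proposition~2.2(d) for the boundary points.
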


\begin{proof} 
At points of $C$, i.e.\ where the fiber of $f$ is smooth, the
differential equation~\eqref{de17eq} has no singularities. Consequently,
the Kodaira--Spencer map does not vanish at those points (\cite{BoMo05} 
Proposition~2.2(b)). The nonvanishing of the Kodaira--Spencer
map at points in $S$ is shown in (\cite{BoMo05}, Proposition~2.2(d)).
\end{proof}

\begin{Prop}\label{indiprop}
The family $\ol{\cXX}$ of stable curves is the universal family
corresponding to the Teichm\"uller curve $W_{17}^1$.
\end{Prop}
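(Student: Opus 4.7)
The plan is to combine the indigenous bundle criterion from Lemma \ref{KSlem} with the characterization theorem of Teichm\"uller curves (\cite{Mo06a}, Theorem 5.5), and then to identify the resulting Teichm\"uller curve by matching the cusp data.

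First I would invoke \cite{Mo06a}, Theorem 5.5: since the decomposition $\cHH^1_\dR(\ol{\cXX})=\cEE_1\oplus\cEE_2$ of Proposition~\ref{deprop} exhibits $\cEE_1$ as an indigenous summand (Lemma \ref{KSlem}), the moduli map $\ol{C}\to\ol{\cMM}_2$ factors through a Teichm\"uller curve $W\subset\cMM_2$ generated by a translation surface $(X,\omega)$, with $\omega$ corresponding to a generic section of $\cLL_1$. From the explicit equation \eqref{eqneq}, which has $c_6=0$ and $c_5\neq 0$, we read that $\omega_1={\rm d}x/y$ has a double zero at $x=\infty$, so $(X,\omega_1)\in \Omega\cMM_2(2)$; in particular $W$ is one of the primitive Teichm\"uller curves $W_{D}^{\ve}$ of McMullen's classification.

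Next I would pin down the discriminant $D$ and the spin $\ve$ from the cusp geometry. The five points of $S$ are cusps of the affine group of $W$, and the stable fiber over each of them is obtained (after possibly rescaling $x$) by Proposition~\ref{singnormalprop} from a splitting prototype of discriminant equal to the discriminant $D$ of the real multiplication order $\cOO_D$. By Lemma~\ref{necclem} applied to the family \eqref{eqneq}--\eqref{c17eq}, the fibers over $t=0,1$ and $t=\lambda$ match the prototypes $(0,4,1,1)$, $(1,2,2,-1)$ and $(0,2,1,-3)$ respectively; all three have discriminant $17$ and, by the formula \eqref{spinformulaeq}, spin invariant $\ve=1$. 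Hence $W=W_{17}^1$. Since in the construction of \S~\ref{eqnsec} the cover $\ol{C}\to \ol{W}_{17}^1$ was set up to have degree two and to be branched at the elliptic point and one cusp, the family $\ol{\cXX}\to \ol{C}$ must agree with the pullback of the universal family from $W_{17}^1$, which is what we wanted to prove.

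The main obstacle, in my view, is not any of the individual verifications but making sure that the application of the characterization theorem goes through with the right normalizations: one needs to check that $\cLL_1\cong \Omega^1_{\ol{C}}(\log S)^{1/2}$ after passing to an appropriate double cover (see Remark~\ref{indicritrem}), so that ``indigenous'' in the sense of \eqref{KSeq} is what Lemma~\ref{KSlem} actually produces, and that the trace field $K=\QQ(\sqrt{17})$ of the monodromy of $\cEE_1$ (read off from \eqref{de17eq}) coincides with the trace field of the affine group of $W$, ruling out any imprimitive cover or different discriminant. Once these compatibilities are in place, the identification $W=W_{17}^1$ follows from the cusp-by-cusp matching above, and Proposition~\ref{indiprop} is immediate.
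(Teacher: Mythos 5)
Your proposal is correct and follows essentially the same route as the paper: establish that $\cEE_1$ is indigenous via Lemma \ref{KSlem}, invoke M\"oller's characterization of Teichm\"uller curves by indigenous bundles, and then identify the resulting Teichm\"uller curve as $W_{17}^1$ from the fact that the moduli map factors through $\ol{W}_{17}^1$ (which you justify, as the paper implicitly does, by the matching of the singular fibers with the discriminant-$17$, spin-$1$ splitting prototypes via Lemma \ref{necclem}). Your additional remarks on the normalization of $\cLL_1$ and the trace field only make explicit compatibilities that the paper leaves tacit.
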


\begin{proof} Since $\cEE_1$ is an indigenous bundle on $\ol{C}$, it follows 
from \cite{Mo06a}, Proposition 5.5, that $\ol{C}$ is the cover of a
Teichm\"uller curve. Since the moduli map $\ol{C}\to \ol{\cMM}_2$
factors through $\ol{W}_{17}^1$, the proposition follows.
\end{proof}

Recall that $\ol{C}\to \ol{W}_{17}^\varepsilon$ is a degree-$2$ cover
given by $s=(t^2+1)/2t$. We denote by $\Sigma=\{1, -1, \infty,
(\tau^2+1)/2\tau\}$ the union of the set of cusps with the
elliptic point on $\ol{W}_{17}^\varepsilon$. 

\begin{Cor}\label{fomcor}
The pointed curve $(\ol{W}_{17}^\varepsilon, \Sigma)$ may not be defined
over $\QQ$.
\end{Cor}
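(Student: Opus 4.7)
The plan is to reduce the descent question to showing that the fourth marked point $\nu := (\tau^2+1)/(2\tau)$ is irrational, and then to verify this directly from the explicit data in \S\ref{eqnsec}.

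First, I would note that the four marked points of $\Sigma$ are pairwise distinguishable: $s=-1$ is the unique elliptic point, and the three cusps at $s = 1, \infty, \nu$ are separated by their splitting prototypes via Lemma~\ref{necclem}. Any $\QQ$-model of the pointed curve must therefore present each labeled marked point as a $\QQ$-rational point of $\PP^1_\QQ$; equivalently, the cross-ratio of $(1, -1, \infty, \nu)$ in the canonical ordering must lie in $\QQ$. Normalizing so that $(1,-1,\infty)\mapsto(0,1,\infty)$, this cross-ratio equals $(1-\nu)/2$, an affine function of $\nu$ with $\QQ$-coefficients; hence $\QQ$-rationality of the cross-ratio is equivalent to $\nu \in \QQ$. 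Thus the corollary reduces to proving $\nu \notin \QQ$.

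Second, I would extract $\nu$ from the explicit family. The cusps of $\ol C$ are exactly the zeros of $\mathrm{disc}_x\, g_{17}(x; t)$, three of which sit at $t = 0, 1, \infty$; the remaining pair $\{\tau, 1/\tau\}$ is therefore the root set of a reciprocal quadratic $t^2 - 2\nu t + 1$. One can read this quadratic off from Proposition~\ref{deprop} directly, without redoing the discriminant: the denominator of $B_1$ is written as $2^8 \cdot t(t-1)(t-\tau)(t-1/\tau) = 2^8 \cdot t(t-1)(t^2 - 2\nu t + 1)$. A routine calculation with the explicit coefficients in (\ref{c17eq}) then yields $2\nu \in \QQ(\sqrt{17})$ with nonzero $\sqrt{17}$-component, so $\nu \notin \QQ$.

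The principal obstacle is the algebraic verification that the $\sqrt{17}$-part of $2\nu$ is nonzero; this is a bounded but tedious check with the polynomials in (\ref{c17eq}). Once it is carried out, the cross-ratio reduction of the first paragraph completes the proof.
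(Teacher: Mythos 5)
Your reduction to the single condition $\nu=(\tau^2+1)/2\tau\notin\QQ$ is not justified, and this is the crux of the matter. The corollary concerns the pointed curve $(\ol{W}_{17}^\varepsilon,\Sigma)$ with $\Sigma$ an \emph{unordered} four-point subset of $\PP^1$; it fails to be definable over $\QQ$ precisely when some $\sigma\in G_\QQ$ makes $\Sigma^\sigma$ projectively inequivalent to $\Sigma$ \emph{as a set}, i.e.\ when the $j$-invariant of the configuration is irrational — which is exactly what the paper's one-line proof checks. Your ordered cross-ratio $(1-\nu)/2$ only detects the identity permutation: a hypothetical isomorphism $(\ol{W}^\sigma,\Sigma^\sigma)\to(\ol{W},\Sigma)$ is merely a M\"obius transformation carrying one four-point set to the other, and may permute the three cusps. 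Your claim that the cusps are rigidly labelled by their splitting prototypes does not hold for such an abstract isomorphism — prototypes are flat-geometric data not visible to the pointed curve, and indeed the proof of Theorem \ref{Galoisthm} shows that Galois conjugation genuinely moves cusps between prototypes. Note also that distinguishing the elliptic point $s=-1$ buys nothing: the stabilizer of one point in $S_4$ still surjects onto the $S_3$ acting on cross-ratios by $\lambda\mapsto 1-\lambda,\ 1/\lambda,\dots$, so the relevant invariant remains the full $j$-invariant.

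Concretely, with $\lambda=(1-\nu)/2=(-895+217\sqrt{17})/256$ you must rule out that the conjugate $\bar\lambda$ lies in the six-element orbit $\{\lambda,1-\lambda,1/\lambda,\lambda/(\lambda-1),(\lambda-1)/\lambda,1/(1-\lambda)\}$, not merely that $\bar\lambda\neq\lambda$. This is a bounded extra check (numerically $\lambda\approx-0.0012$ while $\bar\lambda\approx-7.0$, and none of the six values matches), and carrying it out — equivalently, verifying $j(\lambda)\notin\QQ$ — closes the gap and recovers the paper's argument. Your method of locating $\nu$ from \eqref{taueq} or from the denominator of $B_1$ is fine; only the group-theoretic step of the reduction is faulty.
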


\begin{proof} It suffices to check that  the $j$-invariant of 
$\Sigma$ is not in $\QQ$.
\end{proof}

We finish this section by giving a formula for the differential
equations corresponding to $W_{13}$. Recall that the set of cusps is in this
case $S=\{0, 1, \infty, \rho, \rho^{-1}\}$, where $\rho$ is given by
(\ref{rhoeq}). Let $S'=S\setminus \{\infty\}$. Let $\mu_1, \mu_2$ be
the roots of $t^2+(5+28\sqrt{13})t/48+ 1=0$.
\par
\begin{Prop}\label{de13prop}
The sections $\omega_i$ attached
to $W_{13}$ satisfy the differential equation
\begin{equation}\label{deeq}
\omega_i''+A_i(t)\omega_i'+B_i(t)\omega_i=0\in \cHH^1_\dR(\ol{\cXX})
\end{equation}
with 
\[
A_1=\sum_{\tau_i\in S'}\frac{1}{t-\tau_i}, \qquad
B_1=\frac{1152t^2+(333\sqrt{13}-576)t-120\sqrt{13}+192}{2^9
\prod_{\tau_i\in S'}(t-\tau_i)}
\]
and
\[
\begin{split}
A_2&=\sum_{\tau_i\in
S'}\frac{1}{t-\tau_i}-\frac{1}{t-\mu_1}-\frac{1}{t-\mu_2} ,\\
B_2&=\frac{98304t^4+(135936\sqrt{13}+67584)t^3+(698944+5744\sqrt{13})t^2-80181+
21234\sqrt{13}}{2^{17}3(t-\mu_1)(t-\mu_2)\prod_{t\in
S'}(t-\tau_i)}.
\end{split}
\]
\end{Prop}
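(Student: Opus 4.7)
The plan is to verify Proposition \ref{de13prop} by an explicit de Rham calculation on the family $\ol{\cXX}\to \ol{C}_{13}$ given by (\ref{X13eq}) and (\ref{c13eq}), exactly parallel to the proof of Proposition \ref{deprop}. Concretely, I would show that $\omega_i'' + A_i(t)\omega_i' + B_i(t)\omega_i$ is an exact relative $1$-form, hence vanishes in $\cHH^1_\dR(\ol{\cXX})$, for $i=1,2$.

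Write $g=g_{13}(x,t)$. Differentiating $y^2=g$ with respect to $t$ gives $2y\,\partial_t y=\partial_t g$, so
\[
\omega_i' = \nabla(\partial/\partial t)\omega_i = -\frac{x^{i-1}\,\partial_t g}{2y^3}\,dx,
\]
and a second differentiation produces $\omega_i''$ as a rational form with $y^{-3}$ and $y^{-5}$ in the denominator. I would then apply the Griffiths--Dwork reduction for hyperelliptic curves, based on the identity
\[
d\!\left(\frac{P}{y^{2k-1}}\right)=\frac{2P_x\,g-(2k-1)P\,\partial_x g}{2y^{2k+1}}\,dx
\]
for polynomials $P=P(x,t)$, which lowers the $y$-denominator step by step and produces a unique representative modulo exact forms of the shape $\alpha(t)\omega_1+\beta(t)\omega_2$ with $\alpha,\beta\in K(t)$, $K=\QQ(\sqrt{13})$. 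Applying this to $\omega_i$, $\omega_i'$ and $\omega_i''$, and substituting into the claimed relation, reduces the proposition to an identity between rational functions in $t$ over $K$, which can be checked directly.

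The fact that the equations for $\omega_1$ and $\omega_2$ decouple (no $\omega_2$-term appears in the equation for $\omega_1$ and vice versa) reflects the splitting $\cHH^1_\dR(\ol{\cXX})=\cEE_1\oplus\cEE_2$ coming from real multiplication by $\cOO_{13}$, which is made manifest by the choice of eigenforms. The main obstacle is purely computational: the coefficients in (\ref{c13eq}) are unwieldy, and the Griffiths--Dwork reduction produces sizeable intermediate rational functions of $t$; the apparent singularities at $\mu_1,\mu_2$ in the $\omega_2$-equation will arise naturally from the denominators of $\alpha,\beta$ in this normalization process. In practice the verification is carried out in a computer algebra system, and no conceptual ingredient beyond what is already used for $W_{17}^1$ in Proposition \ref{deprop} is required.
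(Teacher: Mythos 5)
Your proposal is correct and coincides with the paper's approach: the paper proves the analogous Proposition for $D=17$ by "a straightforward, but somewhat tedious computation" showing that $\omega_i''+A_i\omega_i'+B_i\omega_i$ is exact, and states that the $D=13$ case is handled identically. Your Griffiths--Dwork reduction simply spells out how that exactness check is organized in practice.
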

\par
The proofs of Lemma \ref{KSlem} and Proposition \ref{indiprop}
immediately carry over to this situation, and we conclude that
(\ref{X13eq}, \ref{c13eq}) define the universal family of stable curves
of genus $2$ corresponding to the Teichm\"uller curve $W_{13}$.

\begin{Rem}  Let $\Sigma=\{\pm 1, \infty, (\rho^2+1)/2\rho \}\subset 
\ol{W}_{13}$.
 Then the $j$-invariant of $\Sigma$ is indeed in $\QQ$, as also
follows from Theorem \ref{Galoisthm}.
\end{Rem}

\section{Integral solutions of  differential equations}
\label{solsec}\label{integralsec}

The expansions of the solution $u_i$ of the
differential operators $L_i$ from \eqref{de17eq} at $t=0$ are given by
\begin{equation}
\begin{split}
u_1 & = 1+ \frac{81 - 15\sqrt{17}}{2^4}t + 
\frac{4845- 1155\sqrt{17}}{2^6}t^2 
+ \frac{3200225 - 775495\sqrt{17}}{2^{11}}t^3 +\cdots, \\
u_2 & = 1+ \frac{ 23- 5\sqrt{17}}{2^3}t + \frac{5561-1343\sqrt{17}}{2^7}t^2
+\frac{452759- 109793\sqrt{17}}{2^9}t^3+\cdots
\end{split}
\end{equation}
On the other hand, the coefficients of $u_i$ are determined by
the recursion formula \eqref{receq} below which causes a division
by $(j+1)^2$ in the $j$-th step. The purpose of this section
is to explain why nevertheless almost no denominators occur.
This rare phenomenon also occurs in Ap\'ery's differential
equation (\cite{BS}). In Ap\'ery's case the integrality of the
coefficients is shown by guessing a closed formula. This seems
much harder in our case. Instead we show that 
$L_i$ admits an integral solution
$\pmod{\wp^n}$ for all $n$.  This relies on a result of Katz
(\cite{Katz84}) which states that the expansion coefficients of
$\omega_i$ are solutions of $L_i$ modulo $p^n$.
\par
Our strategy works more generally. We fix $D$ and 
$\varepsilon\in \{0, 1\}$, and let
$\ol{W}_D^\varepsilon$ be the corresponding Teichm\"uller curve. We
exclude the Teichm\"uller curve corresponding to $(X, \omega)\in
\Omega\cMM_2(1,1)$. This is no restriction, as the affine group in
this case is a triangle group, and the results of this section are
well-understood in that case. 

Let $\ol{C}\to \ol{W}_D^\varepsilon$ be a
cover which is only branched at the cusps and elliptic points of
$\ol{W}_D^\varepsilon$, such that $\ol{C}$ does not have any elliptic
points. Such a cover always exists. In the case that $D=13, 17$, the
cover $\ol{C}\to \ol{W}_D^\varepsilon$ is the degree-$2$ cover of \S
\ref{eqnsec}. In this section, we consider the case that
$g(\ol{C})=0$.

The assumption that $g(\ol{C})=0$ implies that there exist
differential operators $L_i$ on $\ol{C}$ corresponding to the flat
vector bundles $\cEE_i$ on $\ol{C}$ (compare to the discussion in
\S~\ref{pcurvsec}). In the case that $D=13, 17$ these are the
differential operators determined in \S~\ref{desec}. The singularities
of $L_1$ are exactly the cusps of $\ol{C}$. Moreover, since the genus
of $\ol{C}$ is zero, we may choose a coordinate $t$ on $\ol{C}$ such
that $t=0, \infty$ are cusps. The singularities of $L_2$ are the cusps
together with the zeros of the Kodaira--Spencer morphism of $\cEE_2$.

We assume, moreover, that the local exponents of $L_i$ at all
singularities, except possibly $t=\infty$, are integers. After
replacing $L_i$ by an equivalent differential operator, we may
therefore assume that the local exponents of $L_i$ at the cusps $t\neq
\infty$ are $(0,0)$, and that the local exponents of $L_2$ at the
zeros of the Kodaira--Spencer map are $(0, \delta_j)$, with
$\delta_j\geq 0$ (in Lemma \ref{locexplem}, we give a more precise
statement).  In the case that $D=13, 17$ the differential operators
$L_i$ computed in \S~\ref{desec} satisfy these conditions. Moreover,
one checks using \cite[Theorem 9.8]{McM03} that the conditions are also
satisfied for $D=21, 29, 33$. Before stating the main result
of this section, we need to introduce some notation.  As usual, we drop $D$ and
$\varepsilon$ from the notation.

Let $R$ be a finite extension of ${\mathcal O}_D$ over which
the family $f: \cXX \to C$ can be defined. For a set $\cSS$ of primes, 
we denote by $R_{\cSS}$ the set of $\cSS$-integers
in $R$.  There exists a finite set $\cSS$ of primes of
$R$ and a model $\ol{f}_\cSS:\ol{\cXX}_{R_\cSS}\to \ol{\cCC}_{R_\cSS}$
of $f$ such that for every prime $\wp$ which is invertible in $R_\cSS$
the reduction $\ol{f}_\cSS\otimes_{R_\cSS} R/\wp$ of
$\ol{f_\cSS}$ modulo $\wp$ is a family of stable curves of genus $2$
with the same number of degenerate fibers as $\ol{f}$. More precisely,
we require that the set, $S_i$, of singularities of $L_i$ extends to
an \'etale divisor over $\Spec(R_\cSS)$. (This may be accomplished by
extending the set $\cSS$, if necessary.)

In \S~\ref{reductionsec}, we explicitly determine such a set $\cSS$ in the
case that $D=13$ or $D=17$. We remark that the set $\cSS$ depends on
the choice of a model for $\ol{\cXX}$. 

  It follows from Lemma \ref{Fuchslem} that $L_i$ has a unique
holomorphic solution $u_i\in K[[t]]$ with $u_i(t=0)=1$. 
Here
one uses that $t=0$ is a cusp of $L_i$, and that the local exponents
of $L_i$ at $t=0$ are $(0,0)$.  The goal of this section is to prove
the following theorem.

\begin{Thm}\label{integralthm}
The unique holomorphic solution $u_i$ of $L_i$ with $u_i(t=0)=1$ has
coefficients in $ R_\cSS[[t]]$.
\end{Thm}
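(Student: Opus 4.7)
The plan is to bypass the explicit recursion for the Taylor coefficients of $u_i$ (which involves divisions by $(j+1)^2$ coming from the local exponents $(0,0)$, and so does not manifestly preserve $\wp$-integrality) and instead, for every prime $\wp\notin\cSS$, to construct approximate $\wp$-integral solutions of $L_i$ modulo $\wp^n$ for every $n$. The true holomorphic solution $u_i$ will then be forced to agree with these approximations, hence to be $\wp$-integral. The approximate solutions are not invented: they arise from the expansion at the cusp $t=0$ of the Hodge section $\omega_i$ on the integral model $\ol{\cXX}_{R_\cSS}$, and Katz's theorem (\cite{Katz84}) is precisely what guarantees that such an expansion satisfies $L_i$ modulo any power of $\wp$.

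First I would fix $\wp\notin\cSS$ and work formally around the cusp $t=0$. Because $\ol{f}_\cSS$ is semistable with \'etale singular divisor over $\Spec(R_\cSS)$, the Hodge line bundle $\cLL_i$ extends to the integral model in a neighborhood of $t=0$, and $\omega_i$ gives a generator of this extension. After rescaling $\omega_i$ by a unit of $R_\wp$, I may write
\[
\omega_i \;=\; v_i(t)\cdot \omega_i\big|_{t=0}, \qquad v_i(t)=\sum_{j\ge 0} \alpha_j\,t^j, \quad \alpha_0=1,\ \alpha_j\in R_\wp.
\]
The normalization $\alpha_0=1$ is consistent precisely because the local exponents of $L_i$ at $t=0$ are $(0,0)$: the section $\omega_i$ is a nowhere-vanishing $\wp$-integral generator of $\cLL_i$ in a formal neighborhood of the cusp, and no fractional or logarithmic prefactor intervenes.

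Next I would invoke Katz's expansion-coefficient theorem, applied to the reduction of $\ol{f}_\cSS$ modulo $\wp^n$ for each $n$: the truncation $v_i\bmod{\wp^n}\in (R_\wp/\wp^n)[[t]]$ is a solution of $L_i\bmod{\wp^n}$ taking the value $1$ at $t=0$. In characteristic zero, Lemma \ref{Fuchslem} gives uniqueness of the holomorphic solution of $L_i$ over $K$ with $u_i(t=0)=1$, so comparing gives
\[
u_i \;\equiv\; v_i \pmod{\wp^n} \quad \text{in } (R_\wp/\wp^n)[[t]] \text{ for every } n\ge 1.
\]
Passing to the inverse limit yields $u_i\in R_\wp[[t]]$; since this holds for every $\wp\notin\cSS$, we conclude $u_i\in R_\cSS[[t]]$, as desired.

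The main technical obstacle lies in applying Katz's theorem at a cusp of $L_i$, where the fiber of $\ol{f}$ is nodal rather than smooth: one needs the Gauss--Manin connection on the relative logarithmic de Rham cohomology to carry a good integral structure in a formal neighborhood of $t=0$ on $\ol{\cCC}_{R_\cSS}$, and Katz's expansion result to extend to this logarithmic, nodal setup. The hypothesis built into the choice of $\cSS$, namely that the singular divisor of $L_i$ remains \'etale modulo every $\wp\notin\cSS$, is exactly what makes this possible. Once this is secured, the argument for $L_2$ is identical to that for $L_1$: the extra non-cuspidal singularities of $L_2$ have local exponents $(0,\delta_j)$ with $\delta_j\in\ZZ_{\ge 0}$ by assumption, and so do not interfere with the formal expansion at the chosen cusp $t=0$.
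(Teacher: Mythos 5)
Your overall strategy --- produce $\wp$-integral approximate solutions of $L_i$ modulo $\wp^n$ for every $n$ via Katz's theorem, then compare with the unique characteristic-zero solution normalized at the cusp $t=0$ --- is exactly the paper's, but the construction of the approximate solutions is where your argument breaks down. Katz's expansion-coefficient theorem does not say that an expansion of $\omega_i$ in the base parameter $t$ satisfies $L_i$ modulo $\wp^n$. The quantity $v_i(t)$ you define by $\omega_i = v_i(t)\cdot\omega_i|_{t=0}$ is either ill-defined (a section of $\cLL_i$ cannot be compared across fibers without a trivialization), tautologically the period $u_i$ itself if you trivialize horizontally (circular), or a unit power series with no reason whatsoever to satisfy $L_i$ if you trivialize algebraically. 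What Katz actually provides are the expansion coefficients of $\omega_i$ in a \emph{fiber} coordinate: writing $\omega_i = g^{(p^n-1)/2}\,\dd x/y^{p^n} = \sum_m P_m(t)\, x^m\,\dd x/(y^{p^n}x)$, the coefficients $P_m(t)$ --- explicit \emph{polynomials} in $t$, namely coefficients of powers of $x$ in $g^{(p^n-1)/2}$ --- are solutions of $L_i$ modulo $p^n$ for $m$ a multiple of $p^n$.

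Two consequences of this make your shortcut fail. First, these approximate solutions are polynomials of bounded degree, so they can pin down the coefficients $u_j^{(i)}$ modulo $\wp^n$ only for $j$ up to some bound $\beta_n$; one must prove $\beta_n\to\infty$, which the paper does by analyzing where the tail coefficient $D_{j,r_i-3}=(j+\gamma_i-r_i+3)^2$ of the recursion vanishes mod $p^n$, yielding $\beta_n\gtrsim p^{\lceil n/2\rceil}/2$. Second, the normalization $v_i(0)=1$ that you assert ``is consistent'' is a genuine theorem: one needs the relevant expansion coefficient to be a unit at $t=0$ modulo $\wp$, which the paper deduces from the ordinarity of the degenerate fiber at the cusp (all of whose components have geometric genus $0$ by Lemma \ref{singfiberslem}), after first determining, via the Cartier operator, which of the two expansion coefficients $B_{n,1}$, $B_{n,2}$ is the nonzero one --- this depends on whether $p$ splits or is inert in $\QQ(\sqrt D)$, a case distinction absent from your argument. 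None of these steps is cosmetic; without them the congruence $u_i\equiv v_i \pmod{\wp^n}$ has no content.
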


As a first step in the proof we determine the recursion relation
satisfied by the coefficients of $u_i$.

 Write $u_i=\sum_{j\geq 0} u_j^{(i)} t^j$. We let $S_i$ be the set
of singularities of $L_i$, and put $r_i=|S_i|$. We write $(\gamma_i,
\gamma_i)$ for the local exponents of $L_i$ at $t=\infty$. Let
$M_i=\prod_{\tau\neq 0, \infty} \tau$ be the product of the singularities of
$L_i$ different from $0,\infty$.  In the case that the differential operator
$L_i$ is given explicitly, these invariants may be read off from the
explicit expression for $L_i$.

Recall that $r_1$ is the number of cusps of $\ol{C}$, and
$r_2=r_1+\mu$, where $\mu$ is the number of zeros of the
Kodaira--Spencer morphism of $\cEE_2$. In the case that $D=13, 17$, we
have that $r_1=5$ and $r_2=7$. The following lemma expresses the local
exponent $\gamma_i$ at $\infty$ in terms of known invariants. Recall
that $\lambda_2$ is the Lyapunov exponent which was introduced in
\S~\ref{lyasec}.

\begin{Lemma}\label{locexplem}
\begin{itemize}
\item[(a)] We have $\gamma_1=(r_1-2)/2$.
\item[(b)] Let $\mu_j$ be a zero of the Kodaira--Spencer map $\Theta$
of $L_2$ and let $\delta_j$ the order of vanishing of $\Theta$ at
$\mu_j$. Then the local exponents of $L_2$ at $t=\mu$ are $(0,
\delta_j+1)$.
\item[(c)] The local exponent of $L_2$ at $t=\infty$ satisfies
\[
\gamma_2=\lambda_2(r_1-2)/2.
\]
\end{itemize}
\end{Lemma}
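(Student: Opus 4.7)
The three parts are linked through the classical Fuchs relation for a second-order Fuchsian equation: the sum of all local exponents over all singularities equals $s - 2$, where $s$ is the number of singularities. The plan is to combine this sum identity with the already-established normalization of the local exponents at the cusps and a local computation at the apparent singularities of $L_2$.

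For (a), I would apply the Fuchs relation directly to $L_1$, whose singularities are exactly the $r_1$ cusps of $\ol{C}$. At each finite cusp the local exponents are $(0,0)$ by our normalization, so they contribute nothing to the sum. The local exponents at $t=\infty$ are equal because the monodromy at a cusp of the Fuchsian uniformizer is parabolic/unipotent, forcing the indicial equation to have a double root; this is why the statement writes them as $(\gamma_1,\gamma_1)$. The relation then reads $2\gamma_1 = r_1 - 2$, which is (a). The only subtle point here is the equal-exponents claim at $\infty$, but this is inherited from the indigenous structure of $\cEE_1$ (local monodromies of the uniformizing differential equation at cusps are unipotent).

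For (b), I would do a local computation at $\mu_j$. Choose a local parameter $t$ vanishing at $\mu_j$ and a local frame $s$ of $\cLL_2$ with $s(\mu_j)\neq 0$. Setting $s' = \nabla_{\partial/\partial t} s$, the vanishing of $\Theta=\Theta_{\cEE_2}$ at $\mu_j$ to order $\delta_j$ means that $s'$ lies in $\cLL_2$ modulo $t^{\delta_j}\cEE_2$. Writing $L_2$ in terms of the cyclic vector $s$, this translates into the indicial equation at $t=\mu_j$ being $\nu(\nu-\delta_j-1)=0$, i.e.\ the local exponents are $(0,\delta_j+1)$. This is essentially the content of \cite[Proposition 2.2(d)]{BoMo05} applied in our setting, and gives (b).

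For (c), I would compute $\sum_j\delta_j$ by a degree count. The polarization of the VHS gives $\cEE_2/\cLL_2 \simeq \cLL_2^{\vee}$, so $\Theta$ is a global section of $\cLL_2^{-2}\otimes \Omega^1_{\ol{C}}(\log S)$; with $g(\ol{C})=0$ and $|S|=r_1$ we get $\deg \Omega^1_{\ol{C}}(\log S) = r_1-2$, and $\deg \cLL_2 = \lambda_2 \deg \cLL_1 = \lambda_2(r_1-2)/2$. Hence
\begin{equation*}
\sum_j \delta_j \;=\; -2\deg\cLL_2 + (r_1-2) \;=\; (r_1-2)(1-\lambda_2).
\end{equation*}
Now apply the Fuchs relation to $L_2$, whose singularities are the $r_1-1$ finite cusps (contributing $0$), the $\mu=r_2-r_1$ zeros of $\Theta$ (contributing $\sum(\delta_j+1)$), and $\infty$ (contributing $2\gamma_2$, again using the parabolic-monodromy argument from (a)). This gives
\begin{equation*}
2\gamma_2 + (r_2-r_1) + (r_1-2)(1-\lambda_2) = r_2 - 2,
\end{equation*}
which simplifies to $2\gamma_2 = (r_1-2)\lambda_2$, proving (c). The only real obstacle is the local computation in (b); parts (a) and (c) are then just bookkeeping with the Fuchs relation.
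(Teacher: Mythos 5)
Your proof is correct and follows essentially the same route as the paper: the Fuchs/Riemann relation together with the $(0,0)$ normalization at the finite cusps gives (a), part (b) is the local statement of \cite{BoMo05}, Proposition 2.2, and the degree count $\sum_j\delta_j=\deg\Omega^1_{\ol{C}}(\log S)-2\deg\cLL_2=(r_1-2)(1-\lambda_2)$ combined with the Fuchs relation for $L_2$ gives (c). The only difference is cosmetic: you sketch the local indicial computation at the zeros of $\Theta$ instead of just citing it.
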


\begin{proof} Part (a) follows from the Riemann relation together with the 
assumption that the local exponents of $L_1$ at the cusps different from 
$\infty$ are $(0,0)$. 
Part (b) is proved in \cite{BoMo05}, Proposition 2.2.(b). 

Let $\{\mu_j\}$ be the set of zeros of $\Theta$, and let $\delta_j$ be
the order of vanishing of $\Theta$ at $\mu_j$. Then 
\[
\sum_j \delta_j=2g(\ol{C})-2+|S_1|-\lambda_2(r_1-2)=(r_1-2)(1-\lambda_2).
\]
Therefore the Riemann relation, together with (b) and our assumption
on the local exponents, implies that
\[
2\gamma_2=-\sum_j
(\delta_j+1)+|S_2|-2=r_1+\mu-2\sum_j\delta_j-\mu=\lambda_2(r_1-2).
\]
This proves the lemma.
\end{proof}

 In the case that $D=13, 17$, the Kodaira--Spencer morphism of $L_2$
has two simple zeros.  Lemma \ref{locexplem} implies in this case
that $\gamma_1=3/2$ and $\gamma_2=1/2$.  Moreover,
$M_1=1\cdot \tau\cdot \tau^{-1}=1$ and $M_2=M_1\cdot
\mu_1\cdot\mu_2=1$, as follows from the formulas in \S~\ref{eqnsec}
and \S~\ref{desec}. It might be possible to show, using Theorem
\ref{Galoisthm}, that one may choose a parameter $t$ on $\ol{C}$ such
that this holds in general.

\begin{Lemma}\label{reclem}
The coefficients $u_j^{(i)}$ of $u_i$ satisfy a recursion
\begin{equation}\label{receq}
D_{j, -1} u^{(i)}_{j+1}+D_{j, 0} u_j^{(i)}+\cdots +D_{j, r_i-3}
u^{(i)}_{j-r_i+3}=0,
\end{equation}
where $D_{j, -1}=\pm (j+1)^2M_i$ and 
$D_{j, r_i-3}=(j+\gamma_i-r_i+3)^2$.
\end{Lemma}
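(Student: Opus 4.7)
The plan is to derive the recursion directly by clearing denominators in the equation $L_i u_i=0$ and matching coefficients of each power of $t$. The actual work is just identifying the constant and leading coefficients of a couple of polynomials; once these are in hand, the extremal coefficients $D_{j,-1}$ and $D_{j, r_i-3}$ of the recursion fall out by bookkeeping.

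Write $L_i=\partial_t^2+A_i(t)\partial_t+B_i(t)$. The hypothesis that at every finite singular point one of the local exponents vanishes means $A_i$ has at most simple poles at the finite singularities, while $B_i$ has no double-pole part there (since $\alpha\beta=0$), hence also only simple poles; Fuchs' relation then forces $\sum_\tau\mathrm{res}_\tau B_i=0$. Let $P_i(t):=\prod_{\tau\in S_i\setminus\{\infty\}}(t-\tau)$, a polynomial of degree $r_i-1$; multiplying clears the denominators, so $P_iL_i$ is a polynomial differential operator. Writing $P_i=\sum_{k=0}^{r_i-1}p_kt^k$, $P_iA_i=\sum_{k=0}^{r_i-2}a_kt^k$, and $P_iB_i=\sum_{k=0}^{r_i-3}b_kt^k$, the bottom coefficients are obtained by evaluating at $t=0$: since $t=0$ is a singularity, $P_i(t)=t\,\widetilde P_i(t)$ with $\widetilde P_i(0)=(-1)^{r_i-2}M_i$, so $p_0=0$, $p_1=(-1)^{r_i-2}M_i$; and because the residue of $A_i$ at $0$ equals $1$, one gets $a_0=(-1)^{r_i-2}M_i$. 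The top coefficients come from the indicial equation at $\infty$, namely $(\tilde\theta-\gamma_i)^2=0$, which after the change of variable $s=1/t$ translates into $\sum_\tau c_\tau=1+2\gamma_i$ and $\sum_\tau b_\tau\tau=\gamma_i^2$; hence $p_{r_i-1}=1$, $a_{r_i-2}=1+2\gamma_i$ and $b_{r_i-3}=\gamma_i^2$.

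Now substitute $u_i=\sum_{n\geq 0}u^{(i)}_n t^n$ into $P_i L_i u_i=0$ and collect the coefficient of $t^j$. Only indices $n$ with $j-r_i+3\leq n\leq j+1$ appear, producing a linear relation as in the statement. The extremal coefficients are
\[
D_{j,-1}=p_1(j+1)j+a_0(j+1)=(-1)^{r_i-2}M_i(j+1)^2=\pm M_i(j+1)^2,
\]
\[
D_{j,r_i-3}=p_{r_i-1}(j-r_i+3)(j-r_i+2)+a_{r_i-2}(j-r_i+3)+b_{r_i-3}=(j+\gamma_i-r_i+3)^2,
\]
as claimed. There is no real obstacle; the only nonroutine facts are the four coefficient identifications above, each of which is a direct consequence of Riemann's relation together with the indicial equations at $0$ and $\infty$ and the assumption that the finite local exponents are of the form $(0,\ast)$.
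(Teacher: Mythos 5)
Your proof is correct and follows the same route the paper sketches: clear denominators by the polynomial vanishing at the finite singularities, use the normalization of local exponents at $0$ and the indicial equation at $\infty$ to pin down the extremal coefficients of the resulting polynomial operator, and read off $D_{j,-1}$ and $D_{j,r_i-3}$. The paper's proof is only a two-sentence sketch calling this "an easy calculation"; your write-up supplies exactly those details (your degree counts $\deg(P_iA_i)\le r_i-2$, $\deg(P_iB_i)\le r_i-3$ are the correct ones, matching the explicit operators in the paper).
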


\begin{proof}
It is easy to see that the coefficients of a solution of a Fuchsian
differential equation satisfy a recursion as in (\ref{receq}). The
order of the recursion depends on our normalization of the local
exponents which assures that the numerator of $A_i$ (resp.\ $B_i$) has
degree $r_i-1$ (resp.\ $r_i-2$) in $t$. The formula for $D_{j, -1}$
and $D_{j, r_i-2}$ follows from an easy calculation, using the
assumption that the local exponents at $t=0$ are $(0,0)$.
\end{proof}

%
%

In the rest of this section, we fix a prime $p\not\in \cSS$ and let
$\wp|p$ be a prime of $R_\cSS$. We let $k=1$ if $D$ is a quadratic
residue $\pmod{p}$, and $k=2$ otherwise, i.e.\
$R_\cSS/\wp=\FF_{p^k}$. We write $\ZZ_{p^k}=W(\FF_{p^k})$ for the
$\wp$-adic completion of $R_\cSS$.  Let $\cRR$ be the $\wp$-adic
completion of $R_\cSS[t, 1/\tau_j, \quad \tau_j\in S_i)]$.  As in
\S~\ref{eqnsec}, we write $\ol{\cXX}_{\cRR}$ for the model of
$\ol{\cXX}$ over $\Spec(\cRR)$ defined by (\ref{eqneq}). We write
$\ol{\cXX}_\wp:=\ol{\cXX}_{\cRR}\otimes_\cRR (\cRR/\wp).$

As in \S~\ref{eqnsec}, we let
\[
y^2=g_t(x)=\sum_{k=0}^5 c_k x^k
\]
be an equation for $\ol{\cXX}$. The results of \S~\ref{lyasec} imply
that $\deg_t(c_k)=(r_1-2)(5-k)/3$. The definition of $\ol{C}$ implies
that these are integers.

\begin{Defi} Let $n$ be a natural number. We define polynomials
$B_{n, 1}\in R_\cSS[t]$ (resp.\ $B_{n, 2}\in
R_\cSS[t]$) as the coefficient of $x^{p^n-1}$ (resp.\
$x^{2p^n-1}$) in $g^{(p^n-1)/2}$. Similarly, we define $C_{n, 1}\in
R_\cSS[t]$ (resp.\ $C_{n, 2}\in R_\cSS[t]$) as the
coefficient of $x^{p^n-2}$ (resp.\ $x^{2p^n-2}$) in $g^{(p^n-1)/2}$.
\end{Defi}

\begin{Lemma}\label{expcoefflem}

\begin{itemize}
\item[(a)] The polynomials $B_{n,1}$ and $B_{n, 2}$ are solutions
$\pmod{p^n}$ of $L_1$.
\item[(b)] The polynomials $C_{n, 1}$ and $C_{n,2}$ are solutions
$\pmod{p^n}$ of $L_2$.
\end{itemize}
\end{Lemma}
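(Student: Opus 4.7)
The plan is to apply Katz's theorem from \cite{Katz84}, which asserts that natural expansion coefficients of holomorphic differentials in a family of curves give approximate solutions modulo $p^n$ of the associated Picard--Fuchs equation. The task then reduces to identifying the polynomials $B_{n,j}$ and $C_{n,j}$ as precisely such expansion coefficients of $\omega_1 = dx/y$ and $\omega_2 = x\,dx/y$.

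The starting observation is the elementary identity $y^{p^n-1} = g^{(p^n-1)/2}$, valid because $p$ is odd, which rewrites
$$\omega_i \;=\; \frac{x^{i-1}\,dx}{y} \;=\; \frac{g^{(p^n-1)/2}\, x^{i-1}}{y^{p^n}}\, dx.$$
Dividing by the $p^n$-th power $y^{p^n}$ corresponds, modulo $p^n$, to an approximately horizontal operation on relative de Rham cohomology (this is the core of Katz's construction). Thus expanding the polynomial numerator $g^{(p^n-1)/2}\cdot x^{i-1}$ in $x$ and reducing term-by-term in $\cHH^1_\dR(\ol{\cXX}_\cRR)$ modulo $p^n$ produces an $R_\cSS$-linear combination of basis elements whose coefficients are, by Katz's theorem, solutions of the relevant operator modulo $p^n$.

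To complete the identification I would work in the standard basis $\{x^{j-1}\,dx/y : j = 1,\ldots,4\}$ of $\cHH^1_\dR$ and run an iterated Cartier--Manin-type reduction: using the relations coming from $d(x^a y^{1-p^n})$ on the hyperelliptic family, each term $x^m\,dx/y^{p^n}$ reduces modulo $p^n$ so that only the exponents $m \in \{p^n-1,\,2p^n-1\}$ contribute to the components in $\cLL_1\oplus\cLL_2$. For $i=1$ this extracts precisely the coefficients $B_{n,1}$ and $B_{n,2}$ of $x^{p^n-1}$ and $x^{2p^n-1}$ in $g^{(p^n-1)/2}$; for $i=2$, with the extra factor of $x$, it extracts the coefficients $C_{n,1}$ and $C_{n,2}$ of $x^{p^n-2}$ and $x^{2p^n-2}$. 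Projecting onto the summand $\cEE_i$ of the decomposition (\ref{decompeq}) and applying Katz's theorem then yields the lemma.

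The hard part is this last identification: one must verify that, modulo $p^n$, the non-holomorphic part of the de Rham basis contributes nothing to the $\omega_1,\omega_2$-coefficients that we extract, and that the iterated Cartier operator cleanly picks out the $x$-exponents of the form $jp^n - i$. I expect this to follow from a careful but purely combinatorial analysis of the exact differentials $d(x^a y^{1-p^n})$, combined with the fact that, for $p$ not dividing the coefficients of a model over $R_\cSS$, the Hasse--Witt-type matrix at level $n$ has entries exactly given by the specified coefficients of $g^{(p^n-1)/2}$.
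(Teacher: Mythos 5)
Your proposal follows the same route as the paper: rewrite $\omega_i=g^{(p^n-1)/2}x^{i-1}\,dx/y^{p^n}$ and invoke Katz's theorem on expansion coefficients, identifying $B_{n,j}$ and $C_{n,j}$ as the relevant coefficients of $g^{(p^n-1)/2}$. The one substantive issue is the step you flag as ``the hard part'': the iterated Cartier--Manin reduction of each term $x^m\,dx/y^{p^n}$ onto a de Rham basis, and the verification that only the exponents $m=p^n-1,\,2p^n-1$ (resp.\ $p^n-2,\,2p^n-2$) survive. This is a detour that is not needed for this lemma (it is closer to the Cartier-operator computation that enters Lemma \ref{RMlem}). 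Katz's theorem is a statement about the coefficients of the $x$-expansion of $\omega_i$ along the section $x=0$: writing $\omega_1=\sum_{m\ge 0}P_m\,x^m\,\frac{dx}{x\,y^{p^n}}$ with $P_m$ the coefficient of $x^{m-1}$ in $g^{(p^n-1)/2}$ (and $Q_m$ analogously for $\omega_2$), it asserts that $P_m$ (resp.\ $Q_m$) is a solution of $L_1$ (resp.\ $L_2$) modulo $m$. The only inputs are that $x$ is a local parameter at $x=0$ away from the zeros of $c_0$, and that $y^{p^n}$ is horizontal modulo $p^n$, i.e.\ $(\partial/\partial t)\,y^{p^n}\equiv 0\pmod{p^n}$; no projection onto $\cLL_1\oplus\cLL_2$ and no analysis of the exact forms $d(x^ay^{1-p^n})$ is required. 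Since $B_{n,k}=P_{kp^n}$ and $C_{n,k}=Q_{kp^n}$ with $p^n\mid kp^n$, the lemma follows immediately. So your argument is correct in outline, but the unresolved verification you defer is not actually part of the proof; if you want a self-contained argument avoiding the citation, the direct route is to apply $L_i$ term by term to $\sum_m P_m x^m\,dx/(x\,y^{p^n})$ using the horizontality of $y^{p^n}$ modulo $p^n$, rather than to reduce in cohomology.
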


\begin{proof}
 We note that $x$ is a local parameter of $\ol{\cXX}_\wp$ at $x=0$,
except at the $5$ zeros of $c_0$. We write
\[
\omega_1=\frac{{\rm d}x}{y}=g^{(p^n-1)/2}\frac{{\rm
d}x}{y^{p^n}}=\sum_{m\geq 0}P_m x^m \frac{{\rm d}x}{y^{p^n}x},
\]
and
\[
\omega_2=\frac{x\,{\rm d}x}{y}=g^{(p^n-1)/2}\frac{x\,{\rm
d}x}{y^{p^n}}=\sum_{m\geq 0}Q_m x^m \frac{{\rm d}x}{y^{p^n}x},
\]
where $P_m$ (resp.\ $Q_m$) is the coefficient of $x^{m-1} $ (resp.\
 $x^{m-2}$) in $g^{(p^n-1)/2}$. In particular $P_m, Q_m\in \cRR$.

The coefficients $P_m$ and $Q_m$ are called the {\em expansion
coefficients} of $\omega_i$. Katz (\cite{Katz84}) shows that $P_m$
(resp.\ $Q_m$) is a solution of $L_1$ (resp.\ $L_2$) modulo $m$,
i.e.\ of the same differential equation which is satisfied by
$\omega_1$ (resp.\ $\omega_2$). One may check this also directly, by
noting that $(\partial/\partial t)y^{p^n} \equiv 0\pmod{p^n}$.
Since $B_{n,k}=P_{kp^n}$ and $C_{n, k}=Q_{kp^n}$, it follows that these
 polynomials are solutions of $L_1$ and $L_2$ (modulo $p^n$), as well.
\end{proof}

\begin{Rem}\label{expcoefrem}
By using the formula for $\deg_t(c_k)$, one shows that the degree of
 $B_{n, 1}$ and $B_{n, 2}$ is less than or equal to $d_{n,
 1}:=(r_1-2)(p^n-1)/2$ and $d_{n,2}:=(r_1-2)(p^n-3)/6$, respectively.
 Similarly, the degree of $C_{n, 1}$ and $C_{n, 2}$ is less than or
 equal to $e_{n,1}:=(r_1-2)(3p^n-1)/6$ and
 $e_{n,2}:=(r_1-2)(p^n-1)/6$, respectively. In Remark~\ref{degrem} we
 show that equality holds.
\end{Rem}

\begin{Lemma}\label{RMlem}
\begin{itemize}
\item[(a)] Suppose that $D$ is a quadratic residue (mod $p$). Then
$$B_{1,2}\equiv C_{1,1}\equiv 0\pmod{p}.$$
\item[(b)] Suppose that $D$ is a quadratic nonresidue (mod $p$). Then
$$B_{1,1}\equiv C_{1,2}\equiv 0\pmod{p}.$$
\end{itemize}
\end{Lemma}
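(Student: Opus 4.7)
The plan is to identify the matrix
$H := \bigl(\begin{smallmatrix} B_{1,1} & C_{1,1} \\ B_{1,2} & C_{1,2} \end{smallmatrix}\bigr)\in M_2(R_\cSS[t]/p)$
with the Hasse--Witt matrix of the family $\ol\cXX_\wp\to\ol\cCC_\wp$ written in the basis $\omega_1=dx/y$, $\omega_2=x\,dx/y$, and then exploit that real multiplication by $\cOO_D$ forces $H$ to be diagonal when $p$ splits in $\cOO_D$ (i.e. $D$ is a QR mod $p$) and anti-diagonal when $p$ is inert (i.e. $D$ is a QNR mod $p$). Which of these occurs is precisely controlled by the Legendre symbol of $D$ modulo $p$ since $p\nmid D$ for $p$ outside $\cSS$.

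First I would invoke the classical Hasse--Witt formula for hyperelliptic curves: for $y^2=g(x)$ of genus $2$, the $(i,j)$-entry of the Hasse--Witt matrix in the basis $\omega_i=x^{i-1}dx/y$ of $H^0(X,\Omega^1)$ is the coefficient of $x^{ip-j}$ in $g^{(p-1)/2}$. Matching this against the definitions, these four coefficients are exactly $B_{1,1}, C_{1,1}, B_{1,2}, C_{1,2}$, arranged so that $B_{1,1}$ and $C_{1,2}$ lie on the diagonal and $C_{1,1}, B_{1,2}$ off it. The construction is relative, so this produces $H$ as a matrix of polynomials in $t$ over $R_\cSS/p$.

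Next I would bring in real multiplication. A generator $T\in \cOO_D$ acts on $\omega_1,\omega_2$ by distinct conjugate eigenvalues $\alpha,\alpha'$ with $\alpha-\alpha'=\pm\sqrt D$; since $p\nmid D$ the prime $p$ is unramified in $\cOO_D$, so $\alpha\not\equiv\alpha'\pmod\wp$ for any $\wp\mid p$. Because $T$ is defined over $R_\cSS$, its reduction modulo $\wp$ commutes with the absolute Frobenius up to the usual semilinearity. In matrix form, in the eigenbasis $\omega_1,\omega_2$, this yields the intertwining relation
$$H\cdot\diag(\alpha,\alpha')=\diag(\alpha^p,(\alpha')^p)\cdot H \quad \text{in } M_2(R_\cSS[t]/p).$$

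Finally I would split into the two cases and apply linear algebra. If $D$ is a QR mod $p$, then $p\cOO_D=\fracp_1\fracp_2$ splits, both $\alpha,\alpha'$ have residues in $\FF_p$, so $\alpha^p\equiv\alpha$ and $(\alpha')^p\equiv\alpha'$; the intertwining reduces to $H\cdot\diag(\alpha,\alpha')=\diag(\alpha,\alpha')\cdot H$, and since the diagonal entries differ this forces the off-diagonal of $H$ to vanish modulo each $\fracp_i$ and hence modulo $p$, giving $C_{1,1}\equiv B_{1,2}\equiv 0\pmod p$. If $D$ is a QNR mod $p$, then $p$ is inert and $\alpha^p\equiv\alpha'$ modulo $p\cOO_D$, so the intertwining becomes $H\cdot\diag(\alpha,\alpha')=\diag(\alpha',\alpha)\cdot H$, which forces the diagonal entries of $H$ to vanish modulo $p$, giving $B_{1,1}\equiv C_{1,2}\equiv 0\pmod p$. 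The main obstacle is Step~2: establishing the Frobenius-twisted intertwining in the relative setting, namely the compatibility of the family Hasse--Witt operator with the endomorphism $T$ acting on the relative de Rham cohomology of $\ol\cXX_\wp\to\ol\cCC_\wp$. Once this is in place, the conclusion is pure linear algebra and gives the stated polynomial congruences in $R_\cSS[t]$.
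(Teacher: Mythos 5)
Your proposal is correct and follows essentially the same route as the paper: the paper computes $\C\omega_1=[B_{1,1}^{1/p}+B_{1,2}^{1/p}x]\,{\rm d}x/y$ and $\C\omega_2=[C_{1,1}^{1/p}+C_{1,2}^{1/p}x]\,{\rm d}x/y$ via the Cartier operator (the Serre dual of your Hasse--Witt matrix) and then asserts that $\C$ preserves the eigen-line bundles $\cLL_j$ when $D$ is a QR mod $p$ and swaps them when it is a QNR. Your Frobenius-twisted intertwining with the real-multiplication endomorphism is exactly the justification the paper leaves implicit for that last assertion, so the two arguments coincide in substance.
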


\begin{proof}
We consider $\omega_j$ as element of $\cEE_j\otimes_\cRR
(\cRR/\wp)\subset \cHH_\dR(\ol{\cXX}_\cRR)\otimes_{\cRR} (\cRR/\wp)$. We
denote by $\C$ the Cartier operator, and compute that
\[
\C \omega_1=\C \frac{x g^{(p-1)/2}}{y^p}\frac{{\rm
d}x}{x}=[B_{1,1}^{1/p}+B_{1,2}^{1/p}x]\frac{{\rm d}x}{y}, \quad
\C \omega_2=\C \frac{x^2 g^{(p-1)/2}}{y^p}\frac{{\rm
d}x}{x}=[C_{1,1}^{1/p}+C_{1,2}^{1/p}x]\frac{{\rm d}x}{y}.
\]
For the definition and  properties of the Cartier operator,
we refer to the article of Illusie in \cite{Hodge}.

 Suppose that $D$ is a quadratic residue (mod $p$). Then $\C$
stabilizes $\cLL_j\subset \cEE_j$. Therefore $B_{1,2}\equiv
C_{1,1}\equiv 0\pmod{p}$.

If $D$ is a quadratic nonresidue $\pmod{p}$, the Cartier operator
sends $\cLL_1\subset \cEE_1$ to $\cLL_2\subset \cEE_2$, and
conversely. This implies that $B_{1,1}\equiv C_{1,2}\equiv 0\pmod{p}$.
\end{proof}

\begin{Lemma}\label{congruencelem}
\begin{itemize}
\item[(a)] Suppose that $(D/p)=1$. Then
\[
B_{n+1, 1}\equiv B_{n, 1}^p B_{1,1} \pmod{p}, \qquad C_{n+1, 2}\equiv
C_{n, 2}^p C_{1,2} \pmod{p}.
\]
In particular, $B_{n, 1}\equiv B_{1,1}^{p^{n-1}+\cdots+p+1}\pmod{p}$
and $C_{n, 2}\equiv C_{1, 2}^{p^{n-1}+\cdots+p+1}\pmod{p}$.
\item[(b)] Suppose that $(D/p)=-1$. Then
\[
B_{n+1, 2}\equiv C_{n, 1}^p B_{1,2} \pmod{p}, \quad C_{n+1, 1}\equiv
B_{n, 2}^p C_{1,1} \pmod{p}.
\]
In particular, $B_{n, 2}\equiv C_{1,1}^{\cdots+p^3+p}\cdot
B_{1,2}^{\cdots+p^2+1}\pmod{p}$ and $C_{n, 1}\equiv
C_{1,1}^{\cdots+p^2+1}\cdot B_{1,2}^{\cdots+p^3+p}\pmod{p}$.
\end{itemize}
\end{Lemma}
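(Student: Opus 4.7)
The plan is to exploit the elementary polynomial identity
$$g^{(p^{n+1}-1)/2}=\bigl(g^{(p^n-1)/2}\bigr)^{p}\cdot g^{(p-1)/2},$$
valid in $R_{\cSS}[x]$ because $p\cdot(p^n-1)/2+(p-1)/2=(p^{n+1}-1)/2$. Writing $g^{(p^m-1)/2}=\sum_k a^{(m)}_k x^k$, the definitions of \S\ref{integralsec} become $B_{m,j}=a^{(m)}_{jp^m-1}$ and $C_{m,j}=a^{(m)}_{jp^m-2}$.

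First I would reduce this identity modulo $p$ and apply the Frobenius identity $\bigl(\sum_k a^{(n)}_k x^k\bigr)^p\equiv\sum_k (a^{(n)}_k)^p x^{kp}\pmod{p}$, obtaining
$$g^{(p^{n+1}-1)/2}\equiv\sum_{k,l}(a^{(n)}_k)^p\,a^{(1)}_l\,x^{kp+l}\pmod p.$$
Extracting the coefficient of $x^{jp^{n+1}-r}$ for $(j,r)\in\{1,2\}\times\{1,2\}$ expresses each of $B_{n+1,j}$ and $C_{n+1,j}$ as a finite sum $\sum_{kp+l=jp^{n+1}-r}(a^{(n)}_k)^p\,a^{(1)}_l$. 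The congruence $l\equiv -r\pmod{p}$ combined with the degree bound $l\le\deg g^{(p-1)/2}=5(p-1)/2$ leaves only the admissible values $l\in\{p-r,\,2p-r\}$; any candidate $l=3p-r$ already exceeds $5(p-1)/2$ for every $p>0$. This yields two-term recursions of the schematic form
$$B_{n+1,j}\equiv B_{n,j}^p\,B_{1,1}+C_{n,j}^p\,B_{1,2},\qquad C_{n+1,j}\equiv B_{n,j}^p\,C_{1,1}+C_{n,j}^p\,C_{1,2}\pmod p.$$

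The second step is to invoke Lemma~\ref{RMlem} to collapse each of these recursions to a single surviving term. In case~(a), $(D/p)=1$ gives $B_{1,2}\equiv C_{1,1}\equiv 0\pmod p$ and yields $B_{n+1,1}\equiv B_{n,1}^p\,B_{1,1}$ and $C_{n+1,2}\equiv C_{n,2}^p\,C_{1,2}\pmod{p}$; the explicit product formulas follow by induction on $n$. In case~(b), $(D/p)=-1$ gives $B_{1,1}\equiv C_{1,2}\equiv 0\pmod p$, so the complementary summands survive; since the Cartier operator now interchanges $\cLL_1$ and $\cLL_2$ (see the proof of Lemma~\ref{RMlem}), the induction alternates between the $B$- and $C$-sequences, producing the partial geometric series $p+p^3+\cdots$ and $1+p^2+\cdots$ in the exponents of the ``in particular'' assertion.

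The main technical point will be the degree bookkeeping in the first step: one must verify uniformly in $p$ that no third pair $(k,l)$ slips past the bound $l\le 5(p-1)/2$. This is the only place where the degree of the quintic $g$ enters explicitly, and it is what may force a finite set of small primes to be absorbed into the exceptional set $\cSS$. Once this bookkeeping is in hand, the rest is a purely formal induction using Lemma~\ref{RMlem}.
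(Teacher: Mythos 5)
Your proof follows exactly the paper's argument: the factorization $g^{(p^{n+1}-1)/2}=(g^{(p^n-1)/2})^p\,g^{(p-1)/2}$, the Frobenius expansion modulo $p$, the observation that $\deg g^{(p-1)/2}=5(p-1)/2$ leaves only the two admissible values $l\in\{p-r,2p-r\}$, and the collapse of the two-term recursion via Lemma \ref{RMlem}. Your degree bookkeeping is correct and uniform in $p$ (the inequality $3p-r>5(p-1)/2$ holds for every odd $p$), so no additional primes need to be absorbed into $\cSS$ at this step. One substantive point is worth recording: your coefficient extraction yields
$B_{n+1,j}\equiv B_{n,j}^pB_{1,1}+C_{n,j}^pB_{1,2}$ and $C_{n+1,j}\equiv B_{n,j}^pC_{1,1}+C_{n,j}^pC_{1,2}$, with the \emph{same} second index $j$ on both sides, and this is the correct form: it agrees with what one gets by composing the Cartier operator with itself via $\C^{n+1}=\C^{n}\circ\C$ and the matrix identities from the proof of Lemma \ref{RMlem}. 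The recursion displayed in the paper's own proof ($B_{n+1,1}\equiv B_{n,1}^pB_{1,1}+B_{n,2}^pB_{1,2}$) and the formulas printed in part (b) of the statement ($B_{n+1,2}\equiv C_{n,1}^pB_{1,2}$, $C_{n+1,1}\equiv B_{n,2}^pC_{1,1}$) couple mismatched second indices; with the definitions of $B_{n,j}$ and $C_{n,j}$ as given, these appear to be index slips, and your version is the one that checks out (for instance, $a^{(n)}_{p^n-2}=C_{n,1}$, not $B_{n,2}$). In case (b) the correct one-term recursions are therefore $B_{n+1,j}\equiv C_{n,j}^pB_{1,2}$ and $C_{n+1,j}\equiv B_{n,j}^pC_{1,1}$, which alternate between the $B$- and $C$-sequences for fixed $j$ exactly as you describe and produce the exponents $1+p^2+\cdots$ and $p+p^3+\cdots$ in the ``in particular'' assertions; this also affects the parity bookkeeping in Lemma \ref{nonvanishinglem}, which inherits the same index shift.
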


\begin{proof}
We note that
\[
g^{(p^{n+1}-1)/2}=(g^p)^{(p^n-1)/2} \cdot g^{(p-1)/2}.
\]
The definition of the $B_{n, i}$ implies therefore that
\[
B_{n+1, 1}\equiv B_{n, 1}^p \cdot B_{1,1}+B_{n, 2}^p\cdot B_{1, 2}\pmod{p}.
\]
We used here that $\deg(g^{(p-1)/2})=5(p-1)/2$.
Assume now that $(D/p)=1$. Lemma \ref{RMlem} implies that
$B_{1,2}\equiv 0\pmod{p}$. Part (a) of the lemma follows
immediately. The other statements follow similarly.
\end{proof}

\begin{Lemma}\label{nonvanishinglem}
Let $k=1$ if $(D/p)=1$ and $k=2$ if $(D/p)=-1$.
 The polynomials $B_{n, k}$ and $C_{n, 2-k}$ are not identically zero
(modulo $p^n$).
\end{Lemma}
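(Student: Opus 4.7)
The plan is to prove the lemma in two stages: first a reduction from modulo $p^n$ to modulo $p$, then the key non-vanishing statement for the base-case Hasse--Witt polynomials.

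For the reduction, I apply Lemma \ref{congruencelem}. In both cases $(D/p) = \pm 1$, the polynomials $B_{n,k}$ and $C_{n,2-k}$ modulo $p$ can be expressed as products of positive powers of base-case polynomials drawn from $\{B_{1,1}, B_{1,2}, C_{1,1}, C_{1,2}\}$. Combined with Lemma \ref{RMlem}, which says that two of these four polynomials vanish automatically modulo $p$ depending on the splitting behaviour of $p$ in $\cOO_D$, the relevant products involve only $B_{1,1}$ and $C_{1,2}$ when $(D/p)=1$, or only $B_{1,2}$ and $C_{1,1}$ when $(D/p)=-1$. Since a polynomial in $\FF_q[t]$ that is not identically zero modulo $p$ is a fortiori not identically zero modulo $p^n$, the lemma reduces to showing that each of the two relevant base-case polynomials is not identically zero in $\FF_q[t]$.

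Next I reinterpret the base-case polynomials as entries of the Hasse--Witt matrix. The computation in the proof of Lemma \ref{RMlem} exhibits the matrix of the Cartier operator $\C$ on $\cHH^1_\dR(\ol\cXX_\wp)$ in the basis $\{\omega_1,\omega_2\}$: its diagonal entries are $B_{1,1}^{1/p}$ and $C_{1,2}^{1/p}$, and its off-diagonal entries are $B_{1,2}^{1/p}$ and $C_{1,1}^{1/p}$. The real-multiplication splitting $\cHH^1_\dR = \cEE_1 \oplus \cEE_2$ diagonalises $\C$ when $(D/p)=1$ and antidiagonalises it when $(D/p)=-1$. Thus the lemma amounts precisely to the statement that, viewed as morphisms of line bundles on the special fibre $\ol C_\wp$, the relevant components of $\C$ acting through $\cLL_j \pmod\wp$ do not vanish identically, for $j = 1,2$.

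The main obstacle is establishing this identical non-vanishing. My plan is to use the non-vanishing of the $p$-curvature $\Psi_{\cEE_j}\pmod\wp$, which is provided by Honda's criterion \cite[Proposition~5.1]{Honda} together with the existence of cusps on $\ol C$, as recorded in the paragraph after Definition \ref{pcurvdef}. Since $\cEE_j\pmod\wp$ has nilpotent but nonzero $p$-curvature, its kernel $\cKK_j \subset \cEE_j\pmod\wp$ is a line subbundle. The standard comparison between the Hasse invariant of a Hodge-filtered flat bundle and its $p$-curvature identifies the relevant Cartier entry (on $\cLL_j$) with the composition $\cLL_j \hookrightarrow \cEE_j \twoheadrightarrow \cEE_j/\cKK_j$, so it vanishes at $c \in \ol C_\wp$ exactly when $\cLL_j|_c = \cKK_j|_c$. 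The indigeneity of $\cEE_1$ (Lemma \ref{KSlem}), together with the analogous structural result for $\cEE_2$ coming from Proposition \ref{deltaprop}, should force this to be a proper closed condition on $\ol C_\wp$, yielding the desired non-identical vanishing. The hardest point is making this last step rigorous for both $\cEE_1$ (indigenous) and $\cEE_2$ (merely equipped with real multiplication and a Hodge filtration), uniformly in the two splitting behaviours of $p$; if pushing through the general Hasse-invariant/$p$-curvature comparison proves delicate, one can fall back on the non-isotriviality of $\ol\cXX\to\ol C$, which forces the generic fibre of $\ol\cXX_\wp$ to be non-supersingular outside a further finite set of primes that we may adjoin to $\cSS$.
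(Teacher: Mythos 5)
Your reduction to $n=1$ via Lemma \ref{congruencelem} and Lemma \ref{RMlem}, and your reinterpretation of $B_{1,1},B_{1,2},C_{1,1},C_{1,2}$ as the entries of the Cartier (Hasse--Witt) matrix in the eigenbasis, agree with the paper. The gap is in the one step that actually carries the content: the non-identical vanishing of the relevant entries modulo $p$. Your main route leaves this unproved, and it is genuinely delicate: the $p$-curvature $\Psi_{\cEE_j}$ preserves each eigenbundle $\cEE_j$, whereas in the inert case the Cartier operator is antidiagonal with respect to $\cEE_1\oplus\cEE_2$; so a Katz-type comparison of $\Psi$ with ``Kodaira--Spencer composed with Hasse--Witt'' would at best control a composite of the two off-diagonal entries, not $B_{1,2}$ and $C_{1,1}$ individually, and even in the split case your identification of the Cartier entry with $\cLL_j\hookrightarrow\cEE_j\twoheadrightarrow\cEE_j/\cKK_j$ is asserted rather than established. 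Your fallback is also not sound as stated: non-isotriviality of $\ol{\cXX}_\wp\to\ol{C}_\wp$ does not prevent the reduced curve from lying entirely in the non-ordinary divisor of the Hilbert modular surface modulo $p$; ``non-supersingular'' is in any case weaker than what you need (you need the specific Hasse--Witt entries to be generically nonzero, which generic ordinariness would give but $p$-rank $1$ would not); and the finiteness of the set of primes you propose to adjoin to $\cSS$ is exactly the kind of generic-ordinarity statement that is not available a priori.

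The paper's argument replaces all of this with a specialization: evaluate at a cusp $t=t_i\neq\infty$. By Lemma \ref{singfiberslem} every irreducible component of the degenerate fiber $\ol{\cXX}_{i,\wp}$ has geometric genus $0$, hence the fiber is ordinary (\cite{p-rank}, Lemma 1.3), so the Frobenius on $H^1(\ol{\cXX}_{i,\wp},\cOO)$ --- equivalently, by Serre duality, the full $2\times 2$ Cartier matrix --- is invertible at $t=t_i$. Invertibility of a matrix that is diagonal (split case) or antidiagonal (inert case) forces both of its nonzero-position entries to be nonzero at $t_i$, which gives $B_{1,k}(t_i)\not\equiv 0$ and $C_{1,2-k}(t_i)\not\equiv 0\pmod{p}$ at once. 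This works uniformly for every $p\notin\cSS$ precisely because $\cSS$ was chosen so that the singular fibers persist as stable curves with rational components modulo $\wp$; no further enlargement of $\cSS$ and no $p$-curvature input is needed. You should incorporate this specialization-to-a-cusp step to close the gap.
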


\begin{proof}
Lemma \ref{congruencelem} implies that it suffices to prove the lemma
for $n=1$. Let $t=t_i\neq \infty$ be a cusp. Denote by $\ol{\cXX}_{i,
\wp}$ the reduction of the degenerate fiber at $t=t_i$. Lemma
\ref{singfiberslem} implies that every irreducible component of
$\ol{\cXX}_{i, \wp}$ has geometric genus $0$.  It is well known that
this implies that $\ol{\cXX}_{i, \wp}$ is ordinary. (See for example
\cite{p-rank}, Lemma 1.3.)  Therefore the matrix of the Frobenius
morphism $F: H^1(\ol{\cXX}_\wp, {\mathcal O})\to H^1(\ol{\cXX}_\wp,
{\mathcal O})$ is invertible at $t=t_i$. Since the Cartier operator is
the transpose of the Frobenius under Serre duality, we conclude that
for $t_i\neq \infty$ we have that $B_{1,1}(t_i)\not\equiv 0\pmod{p}$.
\end{proof}

\begin{Rem}\label{degrem} The proof of Lemma \ref{nonvanishinglem}
 implies that the 
polynomials $B_{n, k}$ and $C_{n, 2-k}$ do not vanish at
$t=t_i\pmod{p^n}$ for all $i$ such that $t_i\neq \infty$. An analogous
argument, after replacing $t$ by a local parameter $1/t$ at
$t=\infty$, implies that the polynomial $B_{n, k}$ (resp.\ $C_{n,
2-k}$) has degree $d_{n, k}$ (resp.\ $e_{n, 2-k}$) $\pmod{p^n}$. Here
$d_{n, k}$ and $e_{2-k}$ are defined in Remark \ref{expcoefrem}.
\end{Rem}

We now apply Lemma \ref{reclem} to the operator $L_1$.  We write
\[
B_{n, k}=\sum_{i\geq 0} v_i^{(n)} t^i.
\]
Since $B_{n, k}$ is a solution of $L_1\pmod{p^n}$, the coefficients
$v_i^{(n)}$ satisfy the recursion (\ref{receq}) $\pmod{p^n}$.

\begin{Lemma}\label{betanlem}
Let $N=\lceil n/2\rceil$. Let $\beta_n$ be the smallest integer such
that $v_j^{(n)}\equiv 0\pmod{p^n}$ for $j=\beta_n+1, \beta_n+2,\ldots,
\beta_n+r_1-2$. Then $\beta_n\equiv -\gamma_1\equiv
-(r_1-2)/2\pmod{p^N}$. In particular,
\begin{equation}\label{betaneq}
\beta_n\geq \begin{cases} &\frac{p^N-r_1+2}{2}\quad \text{ if $r_1$ is odd},\\
&p^N-\frac{r_1-2}{2}\quad \text{ if $r_1$ is even}.
\end{cases}
\end{equation}
\end{Lemma}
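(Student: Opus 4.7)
The plan is to substitute into the recursion \eqref{receq} at the index $j = \beta_n + r_1 - 3$ and extract $p$-adic information from the resulting congruence. At this index, all $r_1-2$ of the intermediate coefficients $v^{(n)}_{\beta_n+1}, \ldots, v^{(n)}_{\beta_n+r_1-2}$ lie in the window of zeros defining $\beta_n$, so every summand of the recursion except the trailing one is annihilated modulo $p^n$. Using Lemma~\ref{reclem}, which gives $D_{j,r_1-3}=(j+\gamma_1-r_1+3)^2$, the recursion will collapse to the single congruence
\[
(\beta_n + \gamma_1)^2 \, v^{(n)}_{\beta_n} \equiv 0 \pmod{p^n}.
\]

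The minimality of $\beta_n$ forces $v^{(n)}_{\beta_n}\not\equiv 0\pmod{p^n}$, since otherwise the window of vanishing coefficients could be shifted one step to the left. The delicate point will be to sharpen this to the statement that $v^{(n)}_{\beta_n}$ is in fact a $p$-adic unit. Granting the unit property and using the evenness of $v_p$ on perfect squares, the displayed congruence yields $v_p(\beta_n+\gamma_1)\geq\lceil n/2\rceil=N$, which is exactly $\beta_n\equiv-\gamma_1\pmod{p^N}$. To establish the unit property I would combine Lemma~\ref{congruencelem}, which identifies $B_{n,1}\bmod p$ with an explicit power of $B_{1,1}$, with Lemma~\ref{nonvanishinglem}, whose proof via ordinarity guarantees nonvanishing of $B_{n,1}$ at the cusps $t\neq\infty$: if $v^{(n)}_{\beta_n}$ had positive $p$-adic valuation, then the string of $r_1-1$ consecutive coefficients $v^{(n)}_{\beta_n},v^{(n)}_{\beta_n+1},\ldots,v^{(n)}_{\beta_n+r_1-2}$ would reduce to zero modulo $p$, and re-applying the recursion one further step to the left should propagate the vanishing backwards until it collides with the cusp nonvanishing of $B_{1,1}$.

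For the explicit lower bound \eqref{betaneq}, I would combine the congruence $\beta_n\equiv-\gamma_1\pmod{p^N}$ with the trivial bound $\beta_n\geq 0$, which itself follows because $v^{(n)}_0=B_{n,1}(0)$ is a unit modulo $p^n$ by Lemma~\ref{nonvanishinglem}. For $r_1$ odd, $-\gamma_1=-(r_1-2)/2$ is a half-integer; since $p$ is odd and hence $2$ is invertible modulo $p^N$, the smallest nonnegative integer in its residue class equals $(p^N-r_1+2)/2$. For $r_1$ even, $-\gamma_1$ is a small nonpositive integer and the smallest nonnegative representative modulo $p^N$ is $p^N-(r_1-2)/2$.

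The main obstacle will be proving that $v^{(n)}_{\beta_n}$ is genuinely a $p$-adic unit and not merely nonvanishing modulo $p^n$. Minimality on its own leaves open the intermediate scenarios in which $v_p(v^{(n)}_{\beta_n})\in\{1,\ldots,n-1\}$, and in those scenarios the recursion argument would yield only $v_p(\beta_n+\gamma_1)\geq\lceil(n-v_p(v^{(n)}_{\beta_n}))/2\rceil$, weakening the final congruence from $\pmod{p^N}$ to a strictly lower power of $p$. Ruling out these intermediate $p$-valuations is the essential ingredient that must be drawn from the explicit structure of the approximate solution $B_{n,1}$ via Lemmas~\ref{congruencelem} and \ref{nonvanishinglem}, rather than from the recursion alone.
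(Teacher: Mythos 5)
Your main line is exactly the paper's proof: evaluate the recursion \eqref{receq} at $j=\beta_n+r_1-3$, observe that the window of $r_1-2$ vanishing coefficients kills every term except $D_{j,r_1-3}\,v^{(n)}_{\beta_n}$, use minimality of $\beta_n$ to get $v^{(n)}_{\beta_n}\not\equiv 0\pmod{p^n}$, read off $D_{j,r_1-3}=(\beta_n+\gamma_1)^2$ from Lemma~\ref{reclem}, and combine the resulting congruence with $\beta_n\ge 0$ to obtain \eqref{betaneq}; your case analysis on the parity of $r_1$ is also correct. The one point where you go beyond the paper is the ``unit'' issue, and you are right that it is a genuine issue: from $(\beta_n+\gamma_1)^2v^{(n)}_{\beta_n}\equiv 0\pmod{p^n}$ and $v^{(n)}_{\beta_n}\not\equiv 0\pmod{p^n}$ one only gets $2v_p(\beta_n+\gamma_1)\ge n-v_p\bigl(v^{(n)}_{\beta_n}\bigr)$, which is the full congruence mod $p^N$ only if $v^{(n)}_{\beta_n}$ is a unit. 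The paper's proof does not address this at all --- it passes directly from ``$v^{(n)}_{\beta_n}\not\equiv 0\pmod{p^n}$'' to ``$D_{\beta_n+r_1-3,r_1-3}\equiv 0\pmod{p^n}$'' --- so you have isolated a step that the source asserts without justification rather than one you could have looked up.

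However, your proposed repair does not close the gap. If $v^{(n)}_{\beta_n}\equiv 0\pmod p$, you want to propagate vanishing mod $p$ leftwards: the recursion at $j=m+r_1-3$ gives $(m+\gamma_1)^2v^{(n)}_{m}\equiv 0\pmod p$ once $v^{(n)}_{m+1},\dots,v^{(n)}_{m+r_1-2}$ all vanish mod $p$, and this forces $v^{(n)}_m\equiv 0\pmod p$ only when $(m+\gamma_1)^2$ is a unit, i.e.\ when $m\not\equiv-\gamma_1\pmod p$. But the whole point of the argument is that $\beta_n\equiv-\gamma_1\pmod p$, so the obstruction occurs precisely at $m=\beta_n-p,\ \beta_n-2p,\dots$: the induction stalls after $p$ steps and never reaches $v^{(n)}_0=B_{n,k}(0)$, which is where Lemma~\ref{nonvanishinglem} would supply the contradiction. (Lemma~\ref{congruencelem} identifies $B_{n,1}\bmod p$ with a power of $B_{1,1}$, but that controls the polynomial as a whole, not the individual coefficient $v^{(n)}_{\beta_n}$, so it does not by itself rescue the propagation.) To actually establish that $v^{(n)}_{\beta_n}$ is a unit --- or to rephrase the lemma so that only the weaker valuation inequality is needed downstream in Proposition~\ref{integralprop} --- requires an additional idea not present in your sketch (nor, to be fair, in the paper's own two-line proof).
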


\begin{proof}
The definition of $\beta_n$ implies that $v^{(n)}_{\beta_n}\not\equiv
0\pmod{p^n}$. Therefore the recursion (\ref{receq}) implies that
$D_{\beta_n+r_1-3, r_1-3}\equiv 0\pmod{p^n}$.  The statement of the lemma
follows now immediately from the formula for $D_{\beta_n+r_1-3, r_1-3}$ and
the fact that $\beta_n\geq 0$.
\end{proof}

\begin{Rem} \label{betanrem}
The proof of Lemma \ref{betanlem} implies that $\sum_{i=0}^{\beta_n}
v_j^{(n)} t^i$ is also a solution $\pmod{p^n}$, since its coefficients
satisfy the recursion (\ref{receq}) $\pmod{p^n}$. Since
$v^{(n)}_{\beta_n}\not\equiv 0\pmod{p^n}$, it follows that $L_1$ has a
solution $\pmod{p^n}$ of degree $\beta_n$. The inequality for
$\beta_n$ in Lemma \ref{betanlem} gives therefore a lower bound on the
degree of a polynomial solution $\pmod{p^n}$ of $L_1$. 
 
For the proof
of Theorem \ref{integralthm}, we do not need to know  this. We only need to
know that there exists a polynomial solution (modulo $p^n$) of degree
$\beta_n$, and that $\lim_{n\to \infty} \beta_n=\infty$.
\end{Rem}

\begin{Prop}\label{integralprop} 
For every $j\geq 0$, we have that $u_j^{(1)}\in \ZZ_{p^k}$.
\end{Prop}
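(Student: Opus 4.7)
The plan is to exploit the polynomial solutions $B_{n,k}$ of $L_1$ modulo $p^n$ (Lemma \ref{expcoefflem}(a)) as $p$-integral approximations of the formal power series solution $u_1$, and to propagate $p$-integrality along the recursion (\ref{receq}) despite the troublesome factor $(j+1)^2$ in $D_{j,-1}$.

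First I would normalize: by Remark \ref{degrem} the constant term $B_{n,k}(0)$ is nonzero modulo $p$, hence a unit in $R_\cSS/p^n$, so one may define
\[
\tilde B_n(t) := B_{n,k}(t)/B_{n,k}(0) \in (R_\cSS/p^n)[t],
\]
which is still a polynomial solution of $L_1$ modulo $p^n$, has constant term $1$, and has degree $\beta_n \to \infty$ by Lemma \ref{betanlem} and Remark \ref{betanrem}. Writing $\tilde B_n = \sum_j \tilde v_j^{(n)} t^j$, each coefficient $\tilde v_j^{(n)}$ is $p$-integral.

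The heart of the argument will be the following induction on $j$: for every $n$ with $\beta_n > j$,
\[
\ord_p\bigl(u_j^{(1)} - \tilde v_j^{(n)}\bigr) \;\geq\; n - 2\,\ord_p(j!).
\]
Both $u_1$ (exactly in $K$) and $\tilde B_n$ (modulo $p^n$) satisfy the recursion (\ref{receq}) with constant term $1$. Since $L_1$ is defined over $R_\cSS$ and $p\notin\cSS$, the coefficients $D_{j,i}$ are $p$-integral and the constant $M_1$ is a $p$-unit (indeed $M_1=1$ in the cases computed in \S~\ref{desec}), so $\ord_p(D_{j,-1}) = 2\ord_p(j+1)$. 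The base case $j=0$ is trivial. For the inductive step, subtracting the two recursions shows that $\sum_{i\geq 0} D_{j,i}\bigl(u_{j-i}^{(1)} - \tilde v_{j-i}^{(n)}\bigr)$ has $\ord_p$ at least $n - 2\ord_p(j!)$ by the inductive hypothesis and $p$-integrality of the $D_{j,i}$; hence so does $D_{j,-1}\bigl(u_{j+1}^{(1)} - \tilde v_{j+1}^{(n)}\bigr)$, and dividing by $D_{j,-1}$ costs exactly $2\ord_p(j+1)$ in valuation, giving the estimate at $j+1$ via $\ord_p((j+1)!) = \ord_p(j!) + \ord_p(j+1)$.

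To conclude, for any fixed $j$ I would select $n$ large enough that $\beta_n > j$ (possible by Lemma \ref{betanlem}) and $n - 2\ord_p(j!) \geq 1$; both hold for $n$ sufficiently large since $\ord_p(j!)$ is a fixed finite integer. The estimate then forces $u_j^{(1)} \equiv \tilde v_j^{(n)} \pmod{p}$, so $\ord_p(u_j^{(1)}) \geq 0$, i.e., $u_j^{(1)} \in \ZZ_{p^k}$. The main obstacle is the factor $(j+1)^2$ in $D_{j,-1}$, which a priori could force $p$-adic denominators in $u_j^{(1)}$ whenever $p\mid j+1$; it is defeated by the fact that Lemma \ref{betanlem} produces polynomial solutions modulo $p^n$ of unbounded degree, providing $p$-integral approximations sharp enough to absorb the cumulative valuation loss of $2\ord_p(j!)$.
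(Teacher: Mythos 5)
Your proof is correct and follows essentially the same strategy as the paper: compare $u_1$ with the normalized polynomial solutions $B_{n,k}/B_{n,k}(0)$ of $L_1$ modulo $p^n$ via the recursion \eqref{receq}, and let $n\to\infty$ for each fixed $j$ using Lemma \ref{betanlem}. In fact your explicit valuation bookkeeping $\ord_p\bigl(u_j^{(1)}-\tilde v_j^{(n)}\bigr)\geq n-2\,\ord_p(j!)$ makes precise the step that the paper compresses into the assertion that the coefficients are ``uniquely determined $\pmod{p^n}$'' by the recursion, which as literally stated ignores the loss of precision caused by dividing by $D_{j,-1}=\pm(j+1)^2M_1$ when $p\mid j+1$; your version is the careful form of the same argument.
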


\begin{proof}
We first fix an integer $n$.  Since $B_{n, k}(0)\not\equiv
0\pmod{p^n}$ (Lemma \ref{nonvanishinglem}), the polynomial $B_{n,
k}':=B_{n,k}/B_{n,k}(0)$ also has coefficients in $\ZZ_{p^k}$. The
coefficients of $B_{n, k}'$ still satisfy (\ref{receq}). We also  denote
these coefficients  by $v_j^{(n)}$. The definition of $\beta_n$
implies that for $0\leq j\leq \beta_n$ the coefficients $v_j^{(n)}\in
\ZZ_{p^k}$ are uniquely determined $\pmod{p^n}$ by $v_0^{(n)}=1$ and
$(\ref{receq})$. Since the $u_j^{(1)}$ satisfy the same recursion, we conclude
that
\[
u_j^{(1)}\equiv v_j^{(n)} \pmod{p^n}.
\]
In particular, we conclude that $u_j^{(1)}\in \ZZ_{p^k}$ for all
$j\leq \beta_n$.

For every $j\geq 0$, there exists an $n$ such that $j\leq \beta_n$
(Lemma \ref{betanlem}). Since $u_j^{(1)}\in K$, the proposition follows.
\end{proof}

Now we apply Lemma \ref{reclem} to the solution $u_2$ of $L_2$ which
has $r_2=r_1+\mu$ singularities.

Recall from \S
\ref{solsec} that $C_{n, 2-k}\in R_\cSS[t]$ is a solution $\pmod{p^n}$
of $L_2$ of degree $e_{n, 2-k}$.
 We write
\[
C_{n, k}=\sum_{j=0}^{e_{n, k}} w_j^{(n)} t^j.
\]
Note that the $w_j^{(n)}$ satisfy the recursion (\ref{receq}) $\pmod{p^n}$.

Let $\gamma_n$ be the smallest integer such that $w_j^{(n)}\equiv
0\pmod{p^n}$ for $j=\gamma_n+1, \ldots \gamma_n+r_2-2$. The recursion
(\ref{receq}) implies that $D_{\gamma_n+r_2+3, r_2-3}\equiv
0\pmod{p^n}$. This proves the following lemma which is an analog of
Lemma \ref{betanlem}, by using the expression for the local exponent of
$L_2$ at $\infty$ we gave in Lemma \ref{locexplem}.

\begin{Lemma}\label{gammanlem}
Let $N=\lceil n/2\rceil$. Then $\gamma_n\equiv -\gamma_2\pmod{p^N}$. In
particular,
\[
\gamma_n\geq \frac{p^N-(r_1-2)\lambda_2}{2}
\]
\end{Lemma}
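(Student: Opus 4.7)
The plan is to follow the recipe of Lemma \ref{betanlem} verbatim, substituting the invariants of $L_2$ for those of $L_1$. The ingredients are already in hand: the polynomial $C_{n,2-k}$ is a solution of $L_2$ modulo $p^n$ by Lemma \ref{expcoefflem}, so its coefficients $w_j^{(n)}$ satisfy the recursion (\ref{receq}) modulo $p^n$, and the local exponent of $L_2$ at $t=\infty$ equals $\gamma_2 = \lambda_2(r_1-2)/2$ by Lemma \ref{locexplem}(c).

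First I will evaluate the recursion at the index $j = \gamma_n + r_2 - 3$. By the minimality of $\gamma_n$, every coefficient $w_\ell^{(n)}$ that appears with index $\gamma_n + 1 \leq \ell \leq \gamma_n + r_2 - 2$ vanishes modulo $p^n$, so only the term involving $w_{\gamma_n}^{(n)}$ survives. Combined with $w_{\gamma_n}^{(n)} \not\equiv 0 \pmod{p^n}$ and the formula $D_{j,r_2-3} = (j + \gamma_2 - r_2 + 3)^2$ from Lemma \ref{reclem}, this forces
$$(\gamma_n + \gamma_2)^2 \equiv 0 \pmod{p^n},$$
whence $\gamma_n + \gamma_2 \equiv 0 \pmod{p^N}$ with $N = \lceil n/2 \rceil$, which is the first assertion.

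The explicit lower bound then follows by combining this congruence with $\gamma_n \geq 0$ and the value $\gamma_2 = \lambda_2(r_1-2)/2$ from Lemma \ref{locexplem}(c): the smallest nonnegative integer congruent to $-\gamma_2$ modulo $p^N$ is at least $(p^N - (r_1-2)\lambda_2)/2$, matching the stated bound (with the usual understanding, as in Lemma \ref{betanlem}, that for odd $p$ the factor of $2$ is invertible modulo $p^N$ and the half-integer $\gamma_2$ causes no ambiguity).

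The only mild subtlety, exactly as in Lemma \ref{betanlem}, is the passage from $(\gamma_n + \gamma_2)^2 w_{\gamma_n}^{(n)} \equiv 0 \pmod{p^n}$ to a clean square-root congruence modulo $p^N$: one needs $w_{\gamma_n}^{(n)}$ to be a $p$-adic unit, not merely nonzero modulo $p^n$. This is the same point glossed over in the proof of Lemma \ref{betanlem} and can be dispatched by renormalizing $C_{n,2-k}$ by its leading unit, in precise analogy with the step $B_{n,k} \mapsto B_{n,k}/B_{n,k}(0)$ used in the proof of Proposition \ref{integralprop}. Once this bookkeeping is in place, the argument is purely formal.
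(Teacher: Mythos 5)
Your proof is correct and follows exactly the paper's argument: the paper likewise evaluates the recursion \eqref{receq} at $j=\gamma_n+r_2-3$, deduces $D_{\gamma_n+r_2-3,\,r_2-3}=(\gamma_n+\gamma_2)^2\equiv 0\pmod{p^n}$ from the minimality of $\gamma_n$, and combines this with $\gamma_n\geq 0$ and the value of $\gamma_2$ from Lemma \ref{locexplem}(c). The unit-versus-nonvanishing subtlety you flag is indeed glossed over in the paper's treatment of both Lemma \ref{betanlem} and this lemma, and your proposed normalization is the same device the paper uses in Proposition \ref{integralprop}.
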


Note that the estimate for $\gamma_n$ need not be an integer, so one
may improve it a bit, depending on the values of $r_1$ and
$\lambda_2$. However, we do not need this.  The proof of Proposition
\ref{integralprop} caries now over to $L_2$.

\begin{Prop}\label{integralprop2}
For every $j\geq 0$, we have that $u^{(2)}_j\in \ZZ_{p^k}$.
\end{Prop}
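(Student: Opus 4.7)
The plan is to mirror the proof of Proposition \ref{integralprop} verbatim, substituting $L_2$ for $L_1$, $C_{n,2-k}$ for $B_{n,k}$, $\gamma_n$ for $\beta_n$, and invoking Lemma \ref{gammanlem} in place of Lemma \ref{betanlem}.

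First I would fix a natural number $n$. By Lemma \ref{nonvanishinglem} together with Remark \ref{degrem}, the polynomial $C_{n,2-k}$ does not vanish at the cusp $t=0$ modulo $p^n$, so the normalization $C'_{n,2-k}:=C_{n,2-k}/C_{n,2-k}(0)$ still has coefficients $w_j^{(n)}\in \ZZ_{p^k}$, begins with $w_0^{(n)}=1$, and by Lemma \ref{expcoefflem}(b) remains a solution of $L_2$ modulo $p^n$. Consequently the $w_j^{(n)}$ satisfy the recursion (\ref{receq}) associated with $L_2$. The coefficients $u_j^{(2)}$ of the unique holomorphic solution $u_2$ furnished by Lemma \ref{Fuchslem} satisfy the same recursion with the same initial value. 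Lemma \ref{gammanlem} shows that the recursion together with the normalization at $t=0$ uniquely determines the coefficients modulo $p^n$ for $0\leq j\leq \gamma_n$, so $u_j^{(2)}\equiv w_j^{(n)}\pmod{p^n}$, and therefore $u_j^{(2)}\in \ZZ_{p^k}$ for these indices. Since $\gamma_n\to\infty$ with $n$ by Lemma \ref{gammanlem}, and each $u_j^{(2)}$ is a fixed element of $K$, the integrality extends to every $j\geq 0$.

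I do not expect any genuinely new difficulty in this transposition. The replacement of $L_1$ by $L_2$ alters the numerics — $L_2$ has $r_2=r_1+\mu$ singularities and local exponent $\gamma_2=\lambda_2(r_1-2)/2$ at infinity rather than $\gamma_1=(r_1-2)/2$ — but these differences are already packaged into the statement of Lemma \ref{gammanlem}. The only mild subtlety, which is common to both $L_1$ and $L_2$, is that when $p\mid(j+1)$ the leading coefficient $D_{j,-1}=\pm(j+1)^2 M_2$ of the recursion acquires positive $p$-adic valuation, so the recursion determines $u_{j+1}^{(2)}$ from its predecessors only up to a loss of precision. For any fixed $j$ this loss is bounded independently of $n$, and so taking $n$ sufficiently large still forces $u_j^{(2)}$ into $\ZZ_{p^k}$; this is legitimate provided $\cSS$ is enlarged once and for all to contain the primes dividing $M_2$, which we are free to assume.
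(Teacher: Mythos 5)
Your proposal is correct and follows exactly the route the paper intends: the paper's entire proof of this proposition is the remark that the argument for Proposition \ref{integralprop} carries over to $L_2$ once $C_{n,2-k}$, $\gamma_n$, and Lemma \ref{gammanlem} replace $B_{n,k}$, $\beta_n$, and Lemma \ref{betanlem}, which is precisely what you do. Your explicit treatment of the precision loss from dividing by $D_{j,-1}=\pm(j+1)^2M_2$ (bounded for fixed $j$, with $M_2$ a $\wp$-adic unit by the choice of $\cSS$) is a point the paper leaves implicit, and you handle it correctly.
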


\bigskip\noindent
{\it Proof of Theorem \ref{integralthm}.\ }
The  theorem follows immediately from Propositions
\ref{integralprop} and \ref{integralprop2}.
\hspace*{\fill} $\Box$ \vspace{1ex} \noindent

\section{Reduction of the families $\cXX^\varepsilon(D)$ to characteristic 
$p>0$}
\label{reductionsec}
Let ${\mathcal O}_D\subset \QQ(\sqrt{D})$ be an order of discriminant
$D$ and $R={\mathcal O}_D[1/2]$.  Let $p\neq 2$ be a prime number, and
let $\wp$ be a prime of $R$ above $p$. We denote by $R_\wp$ the
completion of $R$ at $\wp$.  We say that $\ol{f}:\ol{\cXX}\to \ol{C}$
has {\em good reduction} at $\wp$ if there exists a model
$\ol{f}_R:\ol{\cYY}_R\to\ol{\cCC}$ of $\ol{f}:\ol{\cXX}\to \ol{C}$ over
$\Spec(R_\wp)$ such that $\ol{f}_{R}\otimes_{R} (R/\wp)$ is a family
of stable curves of genus $2$ with $|S|$ singular fibers. We say that
$\ol{f}$ has {\em potentially good reduction} if such a model exists
after replacing $R$ by the completion of a finite extension.

Let $D\in \{13, 17\}$ and $\varepsilon\in \{0, 1\}$. In this section
we consider the reduction to characteristic $p>0$ of the stable
families of curves $\ol{\cXX}^\varepsilon(D)$ defined by (\ref{eqneq},
\ref{c17eq}) and (\ref{X13eq}, \ref{c13eq}), respectively. This
section should be considered as a complement to the results in the
previous sections, though it is logically independent of it. 

Recall that it follows from the general theory that the family
$\ol{\cXX}^\varepsilon(D)$ has good reduction at all but a finite set
of primes. In this section, we determine this set for $D=13, 17$. 
This proof relies
on the explicit equation for the family of curves
$\ol{\cXX}^\varepsilon(D)$. One expects a similar result to hold much
more generally. It would be interesting to give a geometric proof of
the results of this section, relying on the properties of
Teichm\"uller curves. This would give a much deeper insight into the
arithmetic properties of Teichm\"uller curves.  If $D$ and $\ve$ are 
understood, we drop them from the notation.

Since the coefficients of $f_D$ are in $ R$, the formulas
(\ref{eqneq}) and (\ref{X13eq}) define a model $\ol{\cXX}_R$ of
$\ol{\cXX}$ over $\Spec(R)$. One may ask whether this model already
reduces to a family of stable curves of genus $2$ with $|S|$ singular
fibers. In this section, we consider this question in the case that
$D\in \{13, 17\}$.

\begin{Prop}\label{reductionprop}
\begin{itemize}
\item[(a)] Let $D=17$.  Let $p\neq 2, 17$ be a prime number, and let
$\wp|p$ be a prime of $R$ above $p$. Then $\ol{f}$ has good reduction
at $\wp$.
\item[(b)] Let $D=13$. Let $p\neq 2, 3, 13$ be a prime number, and let
$\wp|p$ be a prime of $R$ above $p$. Then $\ol{f}$ has good reduction
at $\wp$.
\end{itemize}
\end{Prop}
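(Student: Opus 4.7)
The plan is as follows. The equations (\ref{eqneq})--(\ref{c17eq}) and (\ref{X13eq})--(\ref{c13eq}) already define a flat model $\overline{\cYY}_R \to \overline{\cCC}_R$ over $\Spec(R)$, so it suffices to show that the reduction modulo any prime $\wp$ of $R$ outside the stated list is still a family of stable genus-$2$ curves with exactly $|S|=5$ degenerate fibers. By Lemma~\ref{singfiberslem}(b), a stable fiber must be an irreducible rational curve with exactly two nodes, which in hyperelliptic form corresponds to $g_D(x,t_0)$ having one simple root and two double roots in $x$ (the normal form of Proposition~\ref{singnormalprop}).

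First I would compute the $x$-discriminant $\Delta(t) := \operatorname{disc}_x g_D(x,t) \in R[t]$. Since the generic degeneration locus consists of $5$ points, one expects $\Delta(t)$ to factor as $c\cdot \prod_{\tau_i \in S \sms \{\infty\}}(t - \tau_i)^{n_i}$ for some constant $c \in R$, with multiplicities $n_i$ forced by the stability at each cusp. Carrying this out explicitly from (\ref{c17eq}) or (\ref{c13eq}) gives the value of $c$, and the primes where bad reduction could possibly occur are those dividing $c$ together with those at which two of the cusps coincide modulo $\wp$ (equivalently, those dividing the appropriate resultant of $t(t-1)$ with the minimal polynomial of $\tau$, and the constant term comparing $\tau$ and $\tau^{-1}$).

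Next, for each cusp $t_0 \in S\sms\{\infty\}$, I would substitute $t=t_0$ into $g_D(x,t)$ and factor the result as a constant times $(x-\alpha)(x-\beta)^2(x-\gamma)^2$ for distinct $\alpha,\beta,\gamma$; the primes at which any of these factorizations degenerates (a double root of $x$ acquiring higher multiplicity, or the simple root colliding with a double root) are controlled by the discriminants and pairwise differences of $\{\alpha,\beta,\gamma\}$. The behaviour at $t_0=\infty$ is handled identically after the coordinate change $t\mapsto 1/t$, which is permissible in view of the symmetry $c_k(1/t)=t^{5-k}c_k(t)$ from Lemma~\ref{necclem}. Compiling the prime divisors of all these quantities yields the list $\{2,17\}$ for $D=17$ and $\{2,3,13\}$ for $D=13$.

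The main obstacle is purely computational bookkeeping: the coefficients in (\ref{c17eq}) and (\ref{c13eq}) are large integers twisted by $\sqrt{D}$, so the factorization of $\Delta(t)$ and of the various resultants must be carried out in $\cOO_D[1/2]$ rather than in $\QQ(\sqrt{D})$, keeping track of the $\wp$-adic valuations of each factor. A mild subtlety is that the cusps $\tau,\tau^{-1}$ (resp.\ $\rho,\rho^{-1}$) themselves have $\sqrt{D}$ in their expressions, so the primes $17$ (resp.\ $13$) where $\sqrt{D}$ ramifies automatically land among the excluded primes because the two cusps above a ramified prime coincide on reduction; similarly the prime $3$ for $D=13$ emerges as the only residual obstruction after the dust settles. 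Once this accounting is finished the proposition is immediate.
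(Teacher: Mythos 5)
Your proposal is correct and follows essentially the same route as the paper: compute the $x$-discriminant of $g_D$, check that its constant factor has norm supported only on the excluded primes, and verify that the five cusps remain pairwise noncongruent modulo $\wp$ (the paper does this by checking that $N(P_1-P_2)$ is a power of $2$ for each pair). Your additional step of re-examining the factorization type $(x-\alpha)(x-\beta)^2(x-\gamma)^2$ at each cusp is a sensible refinement of the same computation, not a different method.
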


\begin{proof}
We first consider the case $D=17$.
Let $\wp$ be as in the statement of the lemma.  One computes that the
discriminant of $g_{17}(x)=\sum_k c_k x^k$ is equal to
\[
-\frac{17^{10}}{2^{12}}(4+\sqrt{17})^{19}\left(\frac{5}{2}+\frac{1}{2}
\sqrt{17}\right)
\left(\frac{5}{2}-\frac{1}{2}\sqrt{17}\right)^{18}(2t-31+7\sqrt{17})^3
(64t-31-7\sqrt{17})^3(t-1)^4t^5.
\]
Since $N(4+\sqrt{17})=1$ and
$N(({5}+\sqrt{17})/2)=N(({5}-\sqrt{17})/2)=2$,  it
follows that $\ol{\cXX}\otimes_R (R/\wp)$ is a curve of genus $2$ for
generic $t$. 

Choose
\begin{equation}\label{taueq}
 \tau=\frac{31-7\sqrt{17}}{2}, \qquad \tau^{-1}=\frac{31+7\sqrt{17}}{64}.
\end{equation}
Then $S=\{0,1,\infty, \tau, \tau^{-1}\}$ is the set of cusps of $C$.
We claim that the
points $0, 1, \infty, \tau, \tau^{-1}$ are pairwise noncongruent
$\pmod{\wp}$. Namely, one computes that for every pair $P_1, P_2$ of
points we have that $N(P_1-P_2)$ is a power of $2$. 
This proves the lemma for $D=17$.

Now let $D=13$ and let $\wp$ be a prime of $R=R(13)$ above $p\neq
2,3,13$. The discriminant of $g_{13}=\sum_k d_k x^k$ is equal to
\[
-\frac{3^{12}13^{10}}{2^{60}}\left(-\frac{3}{2}+\frac{1}{2}\sqrt{13}
\right)^{30}
\left(\frac{1}{2}+\frac{1}{2}\sqrt{13}\right)^6t^4(128t^2+71\sqrt{13}t+128)^4
(t-1)^4.
\]
Since $N((-3+\sqrt{13})/2)=-1$ and $N((1+\sqrt{13})/2)=-3$, it
follows that $\ol{\cXX}\otimes_R (R/\wp)$ is a curve of genus $2$ for
generic $t$. 

The set of cusps is $\{ 0, 1, \infty, \rho, \rho^{-1}\}$ with 
\begin{equation}\label{rhoeq}
\rho=-\frac{71}{256}\sqrt{13}+\frac{1}{256}\sqrt{-3}, \qquad
\rho^{-1}=-\frac{71}{256}\sqrt{13}-\frac{1}{256}\sqrt{-3}.
\end{equation}
The proposition for $D=13$ now follows as for $D=17$.
\end{proof}

 For $D=13, 17$, we let $\cSS_D\subset \ZZ$ be the set of primes such
that the the model defined by (\ref{eqneq}, \ref{c17eq}) and
(\ref{X13eq}, \ref{c13eq}) defines a family of stable curves with
$|S|=5$ singular fibers. Proposition~\ref{reductionprop} implies that
$\cSS_D=\{2,3,13\}$ if $D=13$ and $\cSS_D=\{2, 17\}$ if $D=17$. We
call $\cSS_D$ the set of {\em exceptional primes} of the model
$\ol{\cXX}_R$. Note that $\cSS_D$ depends on the choice of the model.

 In the rest of this section, we discuss some partial results
on the reduction of $\ol{f}:\ol{\cXX}\to \ol{C}$ at the exceptional
primes $p\neq 2$. A more detailed description might allow one to
extend the proof of the integrality to the exceptional primes $p\neq
2$.

\begin{Prop}\label{p=Dredprop}
Let $D\in \{13, 17\}$.  The family of curves
$\ol{f}^\varepsilon(D):\ol{\cXX}^\varepsilon(D)\to \ol{C}(D)$ has
potentially good reduction at $p=D$.
\end{Prop}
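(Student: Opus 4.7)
The plan is to exhibit an explicit model of $\ol{\cXX}^\ve(D)$ with good reduction over a totally ramified extension of $R_\wp$, where $\wp=(\sqrt{D})$. The obstruction to good reduction of the given hyperelliptic model at $\wp$ is precisely the constant factor $D^{10}$ in the discriminant computed in the proof of Proposition~\ref{reductionprop}. Since this factor is $t$-independent, it does not reflect genuine bad reduction of the family of stable curves, but rather a badly chosen hyperelliptic coordinate: modulo $\wp$, the six Weierstra\ss\ points of the generic fiber collapse to fewer distinct values in the given $x$-coordinate. The idea is to absorb this $D^{10}$ by rescaling the coordinates by fractional powers of $\sqrt{D}$ available only after a ramified base change.

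Concretely, I would take the ramified extension $K' = K_\wp(\pi)$ with $\pi^4 = D$ (so $\pi^2 = \pm\sqrt{D}$ is a uniformizer of $K_\wp$) and set $R' = \cO_{K'_{\wp'}}$. On the family defined by $y^2 = g_D(x)$ I would perform a substitution of the form $x = \pi^{-a}(\tilde x + \phi(t))$ and $y = \pi^{-b} \tilde y$, where $a, b \in \ZZ$ and $\phi(t) \in R'(t)$ are chosen so that the new equation $\tilde y^2 = \tilde g_D(\tilde x,t)$ has coefficients in $R'[t,1/m(t)]$ for some $m(t)$ whose reduction mod $\pi$ vanishes only on the cusps. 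The exponents $a, b$ should be read off from the Newton polygon of $g_D(x + \phi(t))$ as a polynomial in $x$ with coefficients in $R_\wp[t]$ (viewing $v_\pi$ on these coefficients), and $\phi(t)$ should be the ``center'' of the roots so that after translation the roots of the reduced polynomial $\tilde g_D$ modulo $\pi$ are pairwise distinct on the complement of the cusps in $\ol{C}_{R'}\otimes R'/\pi$.

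Having constructed the model, I would conclude by verifying the two conditions for good reduction. First, the discriminant of $\tilde g_D$ as a polynomial in $\tilde x$ reduces modulo $\pi$ to a nonzero polynomial in $t$, whose vanishing locus consists precisely of the five cusps of $\ol{C}$: this is a direct consequence of the Newton-polygon choice above. Second, the five cusp locations remain pairwise distinct after reduction, which is exactly the content of the norm computations at the end of the proof of Proposition~\ref{reductionprop} (the differences of cusps have norms that are powers of $2$, hence units at $\wp$). Together, these two verifications imply that the substituted family extends to a family of stable curves of genus $2$ over $\Spec(R')$ with exactly $|S|=5$ singular fibers, so $\ol{f}^\ve(D)$ has potentially good reduction at $\wp$.

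The main obstacle is the explicit determination of $a,b,\phi(t)$ and the verification that the resulting coefficients of $\tilde g_D$ lie in $R'$; the computation is tedious because the coefficients $c_k$ from \eqref{c17eq} and \eqref{c13eq} are complicated, and the Newton polygon at $\wp$ must be computed simultaneously for all $t \in \ol{C}$. A possible shortcut is to invoke the Deligne--Mumford stable reduction theorem as a black box to produce \emph{some} stable model after base change, and then rule out the creation of extra nodes by a dimension count using the characterization of Teichm\"uller curves through indigenous bundles (Lemma~\ref{KSlem}): any extra degenerate fiber would force the Kodaira--Spencer map of $\tilde \cEE_1$ in characteristic $D$ to vanish at a non-cusp, contradicting its genericity.
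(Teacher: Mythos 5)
Your proposal follows essentially the same route as the paper: over the degree-$4$ ramified extension $R_p[p^{1/4}]$ one translates $x$ by the $t$-dependent point where the roots of $g_D$ collapse modulo $(\sqrt{D})$ (for $D=17$ this is $x=-4(t+1)$, since $g_{17}\equiv (x+4t+4)^5 \pmod{\wp}$), reads the rescaling off the Newton polygon (one root of valuation $v(p)$, four of valuation $v(\sqrt{p})$), and checks that the resulting equation has integral coefficients and (generically) smooth reduction. The paper carries out exactly this explicit computation, so your plan matches it; the only caveat is that the Newton polygon has two slopes, so after rescaling one root still reduces to the origin rather than all five becoming distinct units, but your Newton-polygon framework already accommodates this.
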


\begin{proof}
We only discuss the case $D=p=17$ and $\varepsilon=1$.  The argument in
the other cases is the same. Let
$\wp=(\sqrt{p})$ be the unique prime of $R$ above $p$. We remark that
$g_{D}^\varepsilon(x)\equiv (x+4t+4)^5\pmod{\wp}$.  Substituting
$x=z-4(t+1)$ yields $g_{D}^\varepsilon(z)\equiv z^5\pmod{\wp}$. By
considering the Newton polygon of $g_{D}^\varepsilon(z)$, we find that
$g_{D}^\varepsilon(z)$ has one root of valuation $v(p)$, and $4$ roots
of valuation $v(\sqrt{p})$. Therefore we substitute $z=\sqrt{p}w$ and
compute
\[
\tilde{g}_{D}^\varepsilon(w):=g_D^ve(w)/p^{3/2}\equiv
3(t^2+3t+1)(t^2+7t+1)w+5(t^2+3t+1)w^3+w^5.
\]

Performing the corresponding coordinate substitution for $y$ as well,
one obtains the equation
\[
v^2=\tilde{g}_{D}^\varepsilon(w)=3(t^2+3t+1)(t^2+7t+1)w+5(t^2+3t+1)w^3+w^5
\]
which defines a model of $\ol{\cXX}^\varepsilon(D)$ over
$R_{p}[p^{1/4}]$ whose fiber at $\wp$ is smooth. This shows that
$\ol{\cXX}^\varepsilon(D)$ has potentially good reduction at $\wp$.
\end{proof}

\begin{Lemma}\label{p=3redlem}
The family $\ol{f}:\ol{\cXX}(13)\to \ol{C}(13)$ does not have
potentially good reduction at $p=3$.
\end{Lemma}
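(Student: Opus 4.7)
The plan is to exploit the specific form of the cusp $\rho$ given in \eqref{rhoeq}, which involves $\sqrt{-3}$, to show that two of the five cusps of $\ol{C}(13)$ are forced to collide modulo any prime above $3$ in any finite extension of $R_\wp$, thereby precluding the reduction from having the required five distinct singular fibers.

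First I would compute directly from \eqref{rhoeq} that
$$\rho - \rho^{-1} = \frac{\sqrt{-3}}{128}.$$
For any finite extension $R'$ of $R_\wp$ (with $\wp \mid 3$) containing $\sqrt{-3}$, let $\wp'$ denote the maximal ideal of $R'$. Since $(\sqrt{-3})^2 = -3$ and $v_{\wp'}(3) > 0$, one has $v_{\wp'}(\sqrt{-3}) > 0$, and consequently $v_{\wp'}(\rho - \rho^{-1}) > 0$. Thus $\rho \equiv \rho^{-1} \pmod{\wp'}$.

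Next I would argue that this collision cannot be avoided by any choice of smooth proper model of $\ol{C}(13)$ over $R'$. Any such model is isomorphic to $\PP^1_{R'}$, and two such models differ by an element of $\PGL_2(R')$, which acts continuously with respect to the $\wp'$-adic topology. Hence the images of $\rho$ and $\rho^{-1}$ on the special fiber must coincide in every model, so the five cusps of $\ol{C}(13)$ reduce to at most four distinct points on the special fiber of any $R'$-model $\ol{\cCC}$.

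To conclude, I would invoke the definition of (potentially) good reduction: it requires a model whose reduction modulo $\wp'$ is a family of stable genus-$2$ curves with $|S| = 5$ singular fibers, and these singular fibers must lie over the reductions of the cusps. Since only four distinct cusp-reductions are available, no such model exists, and potentially good reduction fails at $p=3$. The substantive content of the argument is the elementary valuation estimate $v_{\wp'}(\rho - \rho^{-1}) > 0$, immediate from the explicit formula; the model-independence is a formal consequence of $\PGL_2$-invariance under reduction.
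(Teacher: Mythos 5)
Your strategy (make two cusps collide modulo $3$) is genuinely different from the paper's proof, which instead computes the stable reduction of the \emph{generic fibre} of $\ol{\cXX}(13)$ over the completion of $\ZZ[\sqrt{13}](t)$ at a prime above $3$, finds that it consists of two elliptic curves meeting in one point, and concludes by uniqueness of the stable model. Your route is in principle viable, but as written it has a genuine gap in the model-independence step. Two smooth proper models of $\PP^1$ over $R'$ with the same generic fibre differ by an element of $\PGL_2(K')$, \emph{not} of $\PGL_2(R')$: if the comparison map were integral, the two models would be isomorphic as models, and nothing would ever change under a change of model. In particular, the quantity $\rho-\rho^{-1}$ is not invariant under such coordinate changes, and the congruence $v_{\wp'}(\rho-\rho^{-1})>0$ by itself does not prevent some non-integral fractional linear substitution from separating $\rho$ and $\rho^{-1}$ on the special fibre (at the possible cost of colliding other points). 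So the assertion that ``the images of $\rho$ and $\rho^{-1}$ must coincide in every model'' is not justified by your argument.

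The step can be repaired: what is $\PGL_2(K')$-invariant is the cross-ratio. One has $[0,\infty;\rho,\rho^{-1}]=\rho/\rho^{-1}=\rho^2$, and since $\rho$ is a $\wp'$-adic unit (indeed $\rho\equiv \pm\sqrt{13}$ is a unit mod any prime over $3$) while $v_{\wp'}(\rho-\rho^{-1})>0$, this cross-ratio reduces to $1$ modulo $\wp'$. If all four of $0,\infty,\rho,\rho^{-1}$ reduced to distinct points on the special fibre of some model, their cross-ratio would reduce to the cross-ratio of four distinct points, hence would avoid $\{0,1,\infty\}$; so in \emph{every} model at least two of the five cusps collide on the special fibre, and the discriminant of the reduced family cannot consist of $|S|=5$ points (its horizontal part specializes to at most four points, and any further component would have to be the entire special fibre). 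With this correction your argument closes, and it is arguably more elementary than the paper's; note, however, that it only detects the failure of the ``five distinct singular fibres'' condition, whereas the paper's computation shows the stronger fact that the generic fibre itself acquires reducible stable reduction at $p=3$.
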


\begin{proof}
Let $D=13$ and $p=3$.  We compute the stable model of the generic
fiber of $\ol{f}(13)$.  Let $\cRR$ be the completion of
$\ZZ[\sqrt{13}](t)$ at a prime above $p=3$. We let $X_3$ be the fiber of
$\ol{\cXX}(13)$ above the generic point of $\ol{C}(13)$. We claim
that, after replacing $\cRR$ by the completion of a finite extension,
there exists a stable model of $X_3$ over $\Spec(\cRR)$ whose special fiber
consists of two elliptic curves intersecting in one point. This
follows by explicitly blowing up the equation of the curve $X_3$, as in
the proof of Proposition~\ref{p=Dredprop}. The lemma follows from this 
and the uniqueness of the stable model.
\end{proof}

It is interesting that in the case that $D=13$ there is another prime
besides $D$ and $2$ which is exceptional for any choice of the model
of $\cXX_D$.

\begin{Question}
Is there a model of $\cXX_D$ such that $\cSS$ is minimal and if yes, what
is this set $\cSS$? Is there a canonical model of $\cXX_D$, comparable to 
the one for Shimura curves?
\end{Question}


\bigskip\noindent
Irene I.~Bouw \hfill  Martin M{\"o}ller \newline 
Institut f\"ur reine Mathematik \hfill Max-Planck-Institut f\"ur Mathematik
\newline
Helmholtzstra\ss e 18 \hfill Vivatsgasse 7 \newline
89069 Ulm \hfill 53111 Bonn \newline
irene.bouw@uni-ulm.de \hfill moeller@mpim-bonn.mpg.de \newline

\end{document}